\newcommand{\citecomment}[2][]{\citen{#2}#1\citevar}
\newcommand{\citeone}[1]{\citecomment{#1}}
\newcommand{\citetwo}[2][]{\citecomment[,~#1]{#2}}
\newcommand{\citevar}{\@ifnextchar\bgroup{;~\citeone}{\@ifnextchar[{;~\citetwo}{]}}}
\newcommand{\citefirst}{\@ifnextchar\bgroup{\citeone}{\@ifnextchar[{\citetwo}{]}}}
\newcommand{\cites}{[\citefirst}
\theoremstyle{plain}
\newtheorem{thm}{Theorem}[section]
\newtheorem{lem}[thm]{Lemma}
\newtheorem{prop}[thm]{Proposition}
\newtheorem{cor}[thm]{Corollary}
\theoremstyle{definition}
\newtheorem{defn}[thm]{Definition}
\theoremstyle{remark}
\newtheorem{rem}[thm]{Remark}
\theoremstyle{plain}
\newtheorem{assumption}[thm]{Assumption}
\DeclareMathOperator{\chara}{char}
\DeclareMathOperator{\Spec}{Spec}
\DeclareMathOperator{\Cl}{Cl}
\DeclareMathOperator{\Tr}{Tr}
\DeclareMathOperator{\Supp}{Supp}
\DeclareMathOperator{\codim}{codim}
\DeclareMathOperator{\Pic}{Pic}
\DeclareMathOperator{\Bl}{Bl}
\DeclareMathOperator{\Sing}{Sing}
\DeclareMathOperator{\NS}{NS}
\DeclareMathOperator{\Jac}{Jac}
\DeclareMathOperator{\Clns}{Cl^{ns}}
\DeclareMathOperator{\Bs}{Bs}
\def\Im{\mathrm{Im}}
\let\phi\varphi
\newcommand{\defi}[1]{\textsf{#1}} % for defined terms
\author{Lena Ji}
\address{1855 East Hall\\
530 Church Street\\
Department of Mathematics\\
University of Michigan\\
	Ann Arbor, MI 48109 \\
	USA}
\email{lenaji.math@gmail.com}
\begin{document}

\title{The Noether--Lefschetz theorem in arbitrary characteristic}

\thanks{During the preparation of this article, the author was supported by the National Science Foundation Graduate Research Fellowship Program under Grant No. DGE-1656466.}

\subjclass[2020]{Primary: 14C20. Secondary: 14C22, 14K30.}

\begin{abstract}
We show that if $X\subset\mathbb P^N_k$ is a normal variety of dimension $\geq 3$ and $H\subset\mathbb P^N_k$ a very general hypersurface of degree $d=4$ or $\geq 6$, then the restriction map $\Cl(X)\to\Cl(X\cap H)$ is an isomorphism up to torsion. If $\dim X\geq 4$, the result holds for $d\geq 2$. The proof uses the relative Jacobian of a curve fibration, together with a specialization argument, and the result holds over fields of arbitrary characteristic.
\end{abstract}

\maketitle

%\tableofcontents

\section{Introduction}

Given an ample divisor $Y$ on a variety $X$, one can ask whether the restriction map on divisors from $X$ to $Y$ is an isomorphism. In the case where the ambient variety is $\mathbb P^3$ over the complex numbers, the Noether--Lefschetz theorem shows that if $Y$ is a very general surface of degree $d\geq 4$ then the answer is yes \cite{noether82, lefschetz21}. Subsequent works of many authors, including \cite{cggh83,green84,ein1985,voisin88,voisin89}, have generalized this theorem in numerous directions, and for arbitrary smooth complex threefolds $X$ the Noether--Lefschetz theorem is known to hold for very general $Y\in|\mathcal L|$ with certain homological assumptions or if the line bundle $\mathcal L$ is sufficiently positive relative to $K_X$ \cite{moishezon67,voisin03,joshi95,brevik-nollet20}.

The main result of this article is a Noether--Lefschetz result for the Weil divisor class group $\Cl$ of normal varieties in arbitrary characteristic:
\begin{thm}[Theorems~\ref{prop:injectivity} and~\ref{prop:surjectivity}]\label{thm:main_theorem}
Let $k$ be an algebraically closed field, $X$ a normal projective variety of dimension $\geq 3$ over $k$, $\mathcal L_0$ a very ample line bundle on $X$, and $d\in\mathbb Z$. For $Y\in|\mathcal L_0^{\otimes d}|$ the restriction map
\[\xymatrix{
\Cl(X) \ar[r] & \Cl(Y)}
\]
\begin{enumerate}
\item\label{item:main_thm_1}
is injective for $d\geq 2$ and general $Y$,
\item\label{item:main_thm_2}
is surjective up to torsion for $d=4$ or $\geq 6$ and very general $Y$ if $\dim X=3$ and $k\neq\overline{\mathbb F}_p$, and
\item\label{item:main_thm_3}
is surjective up to $(\chara k)$-power torsion for $d\geq 2$ and very general $Y$ if $\dim X\geq 4$ and $k\neq\overline{\mathbb F}_p$. If moreover $k$ has infinite transcendence degree over its prime subfield, then surjectivity holds without torsion.
\end{enumerate}
\end{thm}
The omission of $d=5$ in~Theorem~\ref{thm:main_theorem}\eqref{item:main_thm_2} is an artifact of the method of proof (see the discussion before Proposition~\ref{prop:surj_for_2A+B}). Very general in Theorem~\ref{thm:main_theorem} means contained in a \emph{nonempty} subset that is the complement of a countable union of proper closed subvarieties of $|\mathcal L_0^{\otimes d}|$. The assumption that $k\neq\overline{\mathbb F}_p$ in part~\eqref{item:main_thm_2} is necessary: if $X=\mathbb P^3, \mathcal L_0=\mathcal O(1)$, and $d$ is even, then the Tate conjecture implies that every smooth $Y$ over $\overline{\mathbb F}_p$ has even Picard rank, so the complement of this union contains no $\overline{\mathbb F}_p$-points. In particular this happens for $d=4$ and $p\geq 5$ \cite{charles13}.

In characteristic $0$, a Noether--Lefschetz theorem for class groups of normal varieties was previously proven by Ravindra and Srinivas for very general $Y\in|\mathcal L|$ under the assumptions that $\mathcal L$ is basepoint free and ample line bundle, and that $K_X\otimes\mathcal L$ is globally generated \cite{rs09}. For singular varieties, using $\Cl$ rather than $\Pic$ gives the correct generalization, since the example of a cone over a smooth variety of higher Picard rank shows that surjectivity for Picard groups fails. Bruzzo--Grassi \cite{bruzzo-grassi12,bruzzo-grassi18} and Bruzzo--Grassi--Lopez \cite{bruzzo-grassi-lopez20} also studied the Noether--Lefschetz problem in characteristic $0$ for certain classes of mildly singular threefolds.

In arbitrary characteristic, for $d\geq 1$ and $Y$ \emph{very} general, injectivity of the restriction map on $\Cl$ was previously shown by Weil \cite{weil54}. Theorem~\ref{thm:main_theorem}\eqref{item:main_thm_1} improves injectivity to a \emph{general} member.
For surjectivity, when $\dim X=3$ the only previous result in positive characteristic is due to Deligne, who, using $\ell$-adic cohomology to extend Lefschetz's topological arguments, showed that the classical Noether--Lefschetz theorem for surfaces in $\mathbb P^3$ (and a generalization for complete intersections in $\mathbb P^N$, originally due to Lefschetz over $\mathbb C$) is true in positive characteristic \cite{sga7ii}. Away from this complete intersection case, Theorem~\ref{thm:main_theorem}\eqref{item:main_thm_2} in characteristic $p>0$ is new.

When $\dim X\geq 4$ one one can obtain stronger results, and the question is known as the Grothendieck--Lefschetz problem. In this higher-dimensional setting, Grothendieck proved that if $X$ is smooth and $Y$ is ample, in characteristic $0$ the restriction map $\Pic(X)\to\Pic(Y)$ is an isomorphism (and still injective if $\dim X=3$) \cite{sga2}. For normal ambient varieties in characteristic $0$, under the same dimension assumptions Ravindra and Srinivas proved the analogous statement for class groups for general $Y$ in a basepoint free and ample linear system \cite{rs06}, by generalizing Grothendieck's strategy on a resolution of singularities. Brevik and Nollet have shown a version incorporating a base locus \cite{brevik-nollet16}. In positive characteristic, the Grothendieck--Lefschetz theorem for Picard groups on smooth X still holds up to finite $p$-power torsion by replacing Kodaira vanishing with asymptotic Serre vanishing in the proof. For singular varieties, however, the failure of vanishing and Bertini theorems in positive characteristic pose serious obstacles to generalizing the argument of \cite{rs06} to positive characteristic.

To prove Theorem~\ref{thm:main_theorem}, we will study the divisors on $X$ using a family of abelian varieties---namely the relative Jacobian of a curve fibration. N\'eron used this approach in his thesis \cite{neron52}, where he related divisors on $X$ to the Jacobian of the generic fiber in his proof of the Theorem of the Base. This idea of fibering by curves dates back to Picard, and it has also been used to construct the Picard variety \cite{matsusaka-picard-1,igusa52,chow52}.

\subsection{Comparison with previously known results} Before giving an outline of the paper, we highlight which portions of Theorem~\ref{thm:main_theorem} are new.
\begin{enumerate}
\item Injectivity was previously known in characteristic $0$ \cite{rs06}, in arbitrary characteristic for \emph{very} general $Y$ \cite{weil54}, and in arbitrary characteristic modulo $\chara k$-power torsion when $X$ is smooth \cite{sga2}. Theorem~\ref{thm:main_theorem}\eqref{item:main_thm_1} is new in the case $X$ is singular, $\chara k=p>0$, and $Y$ is general.
\item When $\dim X=3$, surjectivity was previously known in characteristic $0$ for normal $X$ and very general $Y\in|\mathcal L|$ if $K_X\otimes\mathcal L$ is globally generated and $\mathcal L$ is basepoint free and ample \cite{rs09}. Theorem~\ref{thm:main_theorem}\eqref{item:main_thm_2} is the first surjectivity result without any assumptions on positivity of $\mathcal L$ relative to $K_X$.

In positive characteristic, Deligne showed surjectivity of $\Pic(\mathbb P^N)\to\Pic(Y)$ for $Y$ a very general complete intersection surface that is not a $(2,2)$ complete intersection in $\mathbb P^4$ or a degree $\leq 3$ surface in $\mathbb P^3$ \cite{sga7ii}. Aside from Deligne's result, Theorem~\ref{thm:main_theorem}\eqref{item:main_thm_2} is the first surjectivity result in characteristic $p>0$.

\item When $\dim X\geq 4$, surjectivity was known in characteristic $0$ \cite{rs06} and modulo $p$-power torsion for smooth $X$ when $\chara k =p>0$ \cite{sga2} (both of these results hold without the ``very general" requirement). To the author's knowledge, Theorem~\ref{thm:main_theorem}\eqref{item:main_thm_3} is the first Grothendieck--Lefschetz result for singular varieties in positive characteristic. (The very general assumption in Theorem~\ref{thm:main_theorem}\eqref{item:main_thm_3} is needed for the current method of proof, which uses Chow varieties.)

\end{enumerate}

\subsection{Outline}
The proof of Theorem~\ref{thm:main_theorem} uses two main ingredients: the relative Jacobian and a degeneration argument. The first step is to fiber $X$ by complete intersection curves in $|\mathcal L|$ and interpret divisors on $X$ using sections of the relative Jacobian of this curve fibration. For the subgroup $\Cl^0$ of algebraically trivial divisors, we use the theory of the Chow trace to show that $\Cl^0$ is preserved under restriction to a general $Y\in|\mathcal L|$. For injectivity of the restriction map on $\Cl$ (Theorem~\ref{prop:injectivity}), we use N\'eron's theorem on specialization of sections of abelian schemes to show that the restriction map on $\Cl$ to ``many" complete intersection curves is injective.

The more difficult part is surjectivity modulo torsion (Theorem~\ref{prop:surjectivity}). First, we assume the ground field $\mathbb K$ has infinite transcendence degree over its prime subfield. The first step is to use a result of Graber--Starr on extending sections of abelian schemes to show surjectivity of the restriction map to a very general \emph{reducible} complete intersection surface $S_1+S_2$. Then, we degenerate a very general $T\in|\mathcal L^{\otimes 2}|$ to $S_1+S_2$ to show that the restriction map $\Cl(X)\to\Cl(T)$ is surjective modulo torsion. This degeneration argument uses a conjecture of Koll\'ar, proven by Voisin, on independence of points in the support of a sufficiently ample divisor on a curve. Obtaining the result for odd multiples of $\mathcal L$ requires an additional degeneration argument, and this two-step degeneration for odd multiples of $\mathcal L$ is the reason $d=5$ is not covered in Theorem~\ref{thm:main_theorem}. Finally, to obtain the result for any algebraically closed field $k\neq\overline{\mathbb F}_p$, we use results of Andr\'e, Ambrosi, and Christensen on specializing N\'eron--Severi groups.

The outline of the paper is as follows. Section~\ref{section:preliminaries} contains relevant background and preliminaries, including Bertini-type theorems that impose the positivity assumptions in Theorem~\ref{thm:main_theorem}. Injectivity and the relative Jacobian are discussed in Section~\ref{sec:inj}. Surjectivity is shown in Section~\ref{sec:surj}.

\subsection*{Acknowledgements} The author is grateful to her advisor, J\'anos Koll\'ar, for his wisdom, patience, and generosity. She would also like to thank Claire Voisin for granting permission to use her proof of Proposition~\ref{lem:kol_sections_indep_conj}; Robin Hartshorne for helpful comments and questions, especially regarding \S\ref{sec:quartic_surfaces}; Bjorn Poonen for suggesting the extension to other fields in \S\ref{section:other_fields}; Joe Waldron for answering many questions and for comments on an early draft of this article; Scott Nollet for suggestions on the presentation; Remy van Dobben de Bruyn and Takumi Murayama for helpful conversations; and the referee for their helpful suggestions.

\section{Preliminaries}\label{section:preliminaries}

In this section we collect relevant background and preliminary results on class group of normal varieties (\S\ref{section:cl_background}), the notion of very general (\S\ref{sec:very_general}),  and on N\'eron's specialization theorem and thin sets (\S\ref{sec:n'eron}). In \S\ref{sec:bertini} we review some Bertini theorems and show some results about elements of sufficiently ample linear systems that we will need to use later in the paper.

\subsection{Class groups of normal varieties}\label{section:cl_background}

Let $X$ be a normal proper variety over an algebraically closed field $k$. When we say divisor without specifying further, we refer to Weil divisors. We use $\sim$ to denote linear equivalence of divisors, and if $D$ is a divisor on $X$, then by abuse of notation we will also write $D\in\Cl(X)$ for its linear equivalence class.

\begin{defn}[{\cite[Definition 15]{kol_mumford}}]
Let $\Cl(X)$ denote the group of divisors modulo linear equivalence, and let $\Cl^0(X)$ be the subgroup of divisors that are algebraically equivalent to 0. The \defi{N\'eron--Severi class group} of $X$ is the quotient $\Clns(X)=\Cl(X)/\Cl^0(X)$.
\end{defn}
The natural inclusions $\Pic^0(X)\subset\Cl^0(X)$ and $\NS(X)\subset\Clns(X)$
are both isomorphisms if and only if every Weil divisor is Cartier (for example if $X$ is locally factorial).

The class group can also be given a scheme structure via the Albanese variety.
Note that here, following \cite{kol_determines}, we use Albanese variety to refer to the classical, pre-Grothendieck notion of the Albanese variety \cite{weil1950} (see also \cite[128]{kol_determines}): the \defi{Albanese variety} of a variety $X$ over an algebraically closed field is the target of the universal \emph{rational map} $\mathrm{alb}_X\colon X\dashrightarrow\textbf{Alb}(X)$ from $X$ to an abelian variety.
\begin{thm}[{\cites{neron52}{lang-neron}[Theorem 17]{kol_mumford}[Section 14]{kol_determines}}]\label{thm:structure_class_group}
Let $X$ be a normal proper variety over an algebraically closed field $k$. Then
\[ \mathbf{Cl}^0(X) = \mathbf{Alb}(X)^*\]
where $\mathbf{Alb}(X)^*$ is the dual of the Albanese variety. The natural injection $\Cl^0(X)\to \mathbf{Cl}^0(X)(k)$ is an isomorphism, and $\Clns(X)$ is a finitely-generated abelian group.
\end{thm}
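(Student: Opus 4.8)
The plan is to reduce the statement to the classical Picard--Albanese theory on a smooth projective model and then descend. All three assertions---the duality $\mathbf{Cl}^0(X)=\mathbf{Alb}(X)^*$, the identification $\Cl^0(X)=\mathbf{Cl}^0(X)(k)$, and finite generation of $\Clns(X)$---follow once the comparison between $X$, its smooth locus $U=X^{\mathrm{sm}}$, and a resolution is controlled in codimension one.

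I would first reduce to $U$. Since $X$ is normal, $\Sing X$ has codimension $\geq 2$, so restriction is an isomorphism $\Cl(X)\xrightarrow{\sim}\Pic(U)$, and under it $\Cl^0(X)$ corresponds to the algebraically trivial classes in $\Pic(U)$. The smooth $k$-point lies in $U$, and since a rational map from a smooth variety to an abelian variety has no base points in codimension one, $\mathrm{alb}_X$ restricts to a morphism on $U$.

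Next I would pass to a smooth projective model $\pi\colon\tilde X\to X$ that is an isomorphism over $U$, with exceptional prime divisors $E_1,\dots,E_r$ (a resolution in characteristic $0$; in general one of the smooth proper models provided by the cited work). Restriction gives a surjection $\Pic(\tilde X)\twoheadrightarrow\Pic(U)=\Cl(X)$ with kernel $\langle[E_1],\dots,[E_r]\rangle$, so $\Cl(X)=\Pic(\tilde X)/\langle[E_i]\rangle$ and $\Clns(X)=\NS(\tilde X)/\langle[E_i]\rangle$; by the theorem of the base $\NS(\tilde X)$ is finitely generated, hence so is its quotient $\Clns(X)$, giving the last assertion. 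For the duality, the key point is that no nonzero combination of the $E_i$ is numerically trivial (negative-definiteness of the exceptional intersection pairing), so $\langle[E_i]\rangle$ meets $\Pic^0(\tilde X)$ only in torsion; thus $\mathbf{Pic}^0(\tilde X)\to\mathbf{Cl}^0(X)$ is an isomorphism of abelian varieties, surjectivity holding because any lift to $\Pic(\tilde X)$ of a class in $\Cl^0(X)$ becomes algebraically trivial after subtracting a suitable combination of the $E_i$. A short argument with the universal property shows $\mathbf{Alb}(\tilde X)=\mathbf{Alb}(X)$: the composite $\tilde X\to X\dashrightarrow\mathbf{Alb}(X)$ is a morphism and induces $\mathbf{Alb}(\tilde X)\to\mathbf{Alb}(X)$, while $U\hookrightarrow\tilde X\to\mathbf{Alb}(\tilde X)$ gives $\mathbf{Alb}(X)\to\mathbf{Alb}(\tilde X)$, and the two are mutually inverse. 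Combining with the classical duality $\mathbf{Pic}^0(\tilde X)=\mathbf{Alb}(\tilde X)^*$ yields $\mathbf{Cl}^0(X)=\mathbf{Alb}(X)^*$, and since $k$ is algebraically closed every $k$-point of $\mathbf{Pic}^0(\tilde X)$ is a genuine line bundle, so $\Cl^0(X)=\mathbf{Cl}^0(X)(k)$.

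The main obstacle is setting up and justifying the scheme structure on $\mathbf{Cl}^0(X)$ itself: unlike $\Pic$, Weil divisor classes on a singular variety do not directly form a Picard scheme, so one must construct the group scheme representing families of algebraically trivial Weil divisors and show that the comparison map $\mathbf{Pic}^0(\tilde X)\to\mathbf{Cl}^0(X)$ is an isomorphism of schemes, not merely on $k$-points. This is precisely the content supplied by the work of N\'eron, Lang--N\'eron, and Koll\'ar; the secondary difficulties are the availability of a smooth proper model and the possible nonreducedness of Picard schemes in positive characteristic, both sidestepped by working with the reduced abelian variety $\mathbf{Pic}^0(\tilde X)_{\mathrm{red}}$ over the algebraically closed field $k$.
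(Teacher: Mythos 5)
First, a remark on the ground rules: the paper does not prove Theorem~\ref{thm:structure_class_group} at all --- it is quoted as background, with the proof outsourced to N\'eron, Lang--N\'eron, and Koll\'ar --- so your attempt must be judged on its own and against the structure of those cited arguments as the paper uses them. Judged that way, there is a genuine gap, and it is at the very first step: you pass to a smooth projective birational model $\tilde X\to X$. The theorem is asserted over an algebraically closed field of \emph{arbitrary} characteristic and in arbitrary dimension, where resolution of singularities is not known beyond dimension three in characteristic $p$. Your fallback --- that in general one may use ``one of the smooth proper models provided by the cited work'' --- is factually wrong: the cited works supply no smooth models, and indeed are organized precisely to avoid resolution. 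N\'eron and Lang--N\'eron construct the Picard variety of a normal projective variety directly from the Jacobian of a generic curve fibration via the $K/k$-trace (this is exactly the mechanism the present paper redeploys in \S\ref{sec:sections_of_rel_jac}), while Koll\'ar's route replaces $X$ by a proper \emph{normal}, generally singular, birational model $X'$ with $\Cl^0(X')=\Pic^0(X')$ (\cite[124.3]{kol_determines}, invoked in the proof of Lemma~\ref{lem:clns_field_extn}) and then compares class groups under proper birational morphisms of normal varieties (Lemma~\ref{lem:cl_birational}). So in positive characteristic and dimension $\geq 4$ your proof never gets started, and no routine patch fixes this within your framework.

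Two further points, within characteristic $0$ where your skeleton is viable. First, deferring ``the scheme structure on $\mathbf{Cl}^0(X)$'' to the citations is partly circular, since that structure together with the identification of its $k$-points with $\Cl^0(X)$ is the substance of the theorem; if instead you take $\mathbf{Pic}^0(\tilde X)_{\mathrm{red}}$ as the \emph{definition} of $\mathbf{Cl}^0(X)$, you must check independence of the model and, crucially, the surjectivity you dispatch in a single clause --- that a lift to $\Pic(\tilde X)$ of a class in $\Cl^0(X)$ becomes algebraically trivial after subtracting exceptional divisors. That statement is precisely the content of \cite[Lemma 18]{kol_mumford} and requires an actual argument (take the strict transform of the total space of an algebraic family of Weil divisors and track how its fibers differ from strict transforms of fibers by exceptional components, then use discreteness of $\NS(\tilde X)$). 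Second, ``negative-definiteness of the exceptional intersection pairing'' is a surface phenomenon; in higher dimension the correct tool for showing that no nonzero combination $\sum a_iE_i$ is numerically trivial is the negativity lemma applied to $\pm\sum a_iE_i$ over $X$. With those repairs your argument gives a correct proof in characteristic $0$ (or whenever a resolution exists), but not of the theorem as stated.
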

In the literature, $\mathbf{Cl}^0$ is sometimes called the Picard variety \cite{weil1950}. 
We refer the reader to \cite{kol_determines} for a modern treatment and to \cite{kleiman_fga_explained} for more on the history.

\begin{lem}[{\cite[Lemma 18]{kol_mumford}}]\label{lem:cl_birational}
Let $\beta\colon X'\to X$ be a proper birational morphism between normal varieties, with exceptional divisors $E_i$. Then $\beta_*$ induces isomorphisms
\begin{gather*}
\Cl^0(X')\cong\Cl^0(X), \\
\Clns(X')/\bigoplus_i\mathbb Z[E_i] \cong\Clns(X).
\end{gather*}
\end{lem}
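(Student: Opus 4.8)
The plan is to reduce both isomorphisms to a single excision sequence for $\Cl$, and then separate off the $\Cl^0$ and $\Clns$ pieces using the structure theorem~\ref{thm:structure_class_group}. First I would fix the common open locus: since $X$ is normal and $\beta$ is proper and birational, $\beta$ is an isomorphism over an open $U\subseteq X$ with $\codim_X(X\setminus U)\geq 2$, and the prime divisors of $X'$ contracted into $X\setminus U$ are exactly the $E_i$. Writing $U'=\beta^{-1}(U)\cong U$, the codimension-one part of $X'\setminus U'$ is $\bigcup_i E_i$, so the standard excision sequence for Weil divisor class groups on normal varieties reads
\[ \bigoplus_i\mathbb Z[E_i]\to\Cl(X')\to\Cl(U')\to 0. \]
Because $X\setminus U$ has codimension $\geq 2$ and $X$ is normal, restriction gives $\Cl(U')\cong\Cl(U)\cong\Cl(X)$, and the resulting composite $\Cl(X')\to\Cl(X)$ is precisely $\beta_*$ (a non-exceptional prime divisor maps to its image, an exceptional one to $0$). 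Thus $\beta_*$ is surjective with kernel the subgroup $\langle[E_i]\rangle$ generated by the exceptional classes.

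Next I would show $\beta_*$ restricts to an isomorphism on $\Cl^0$, which is the heart of the argument. Here I would invoke Theorem~\ref{thm:structure_class_group}: $\mathbf{Cl}^0(X)=\mathbf{Alb}(X)^*$ and $\Cl^0(X)=\mathbf{Cl}^0(X)(k)$, with the analogous statements for $X'$. Since $X$ and $X'$ are birational they carry the same universal rational map to abelian varieties—composing $\mathrm{alb}_X$ with $\beta$, and using the universal property in both directions via the birational inverse, gives $\mathbf{Alb}(X')\cong\mathbf{Alb}(X)$—so dualizing yields an isomorphism $\mathbf{Cl}^0(X')\cong\mathbf{Cl}^0(X)$. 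Taking $k$-points (with $k$ algebraically closed) then produces $\Cl^0(X')\xrightarrow{\sim}\Cl^0(X)$. As a compatibility check, pushing a family of divisors realizing algebraic equivalence forward along the proper map $\beta\times\mathrm{id}$ confirms that $\beta_*(\Cl^0(X'))\subseteq\Cl^0(X)$, so this isomorphism is indeed the one induced by $\beta_*$.

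Finally I would assemble the Néron–Severi statement. Placing $\beta_*$ into the morphism between the short exact sequences $0\to\Cl^0\to\Cl\to\Clns\to0$ for $X'$ and for $X$, the snake lemma together with the two facts just established—$\beta_*$ is an isomorphism on $\Cl^0$, and surjective on $\Cl$ with kernel $\langle[E_i]\rangle$—shows the induced map $\Clns(X')\to\Clns(X)$ is surjective with kernel the image of $\langle[E_i]\rangle$. Injectivity of $\beta_*$ on $\Cl^0$ forces $\langle[E_i]\rangle\cap\Cl^0(X')=0$, so the classes $[E_i]$ remain independent in the finitely generated group $\Clns(X')$ and their span is $\bigoplus_i\mathbb Z[E_i]$, giving $\Clns(X')/\bigoplus_i\mathbb Z[E_i]\cong\Clns(X)$.

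The step I expect to require the most care is the $\Cl^0$ isomorphism—specifically, verifying that the birational identification $\mathbf{Cl}^0(X')\cong\mathbf{Cl}^0(X)$ coming from the Albanese is genuinely induced by $\beta_*$, equivalently that no nonzero integer combination of exceptional divisors is algebraically trivial. Everything else (excision, the snake lemma, and finite generation of $\Clns$) is formal once the structure theorem is available.
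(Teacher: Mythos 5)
The paper itself does not prove this lemma---it is quoted directly from \cite{kol_mumford}---so your proposal has to stand on its own, and it has a genuine gap at its central step. Your excision argument is fine: $\beta$ is an isomorphism over an open $U\subset X$ whose complement has codimension $\geq 2$, so $\beta_*\colon\Cl(X')\to\Cl(X)$ is surjective with kernel generated by the $[E_i]$. The problem is the claim that $\beta_*$ restricts to an isomorphism on $\Cl^0$. What you actually establish is (i) an abstract isomorphism $\mathbf{Cl}^0(X')\cong\mathbf{Cl}^0(X)$ coming from birational invariance of the Albanese, and (ii) the inclusion $\beta_*(\Cl^0(X'))\subseteq\Cl^0(X)$. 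The inference ``so this isomorphism is indeed the one induced by $\beta_*$'' is a non sequitur: (i) and (ii) together do not imply that the particular homomorphism $\beta_*|_{\Cl^0(X')}$ is bijective. Injectivity is exactly the statement that no nonzero class in $\langle[E_i]\rangle$ is algebraically trivial, which you flag at the end as the delicate point but never prove; and surjectivity on $\Cl^0$ is not formal either---it requires lifting an algebraic family on $X$ to one on $X'$ (e.g.\ via strict transforms of families, using that fibers of the strict transform differ from strict transforms of fibers only by exceptional divisors), which is the opposite direction from your pushforward compatibility check, so your ``equivalently'' also understates what is missing. Since your snake-lemma computation of $\ker(\Clns(X')\to\Clns(X))$ takes bijectivity of $\beta_*|_{\Cl^0}$ as an input, the second isomorphism inherits the same gap.

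To close the gap you need the functorial content of Theorem~\ref{thm:structure_class_group}, not just its statement: under the identification $\mathbf{Cl}^0=\mathbf{Alb}^*$, the algebraically trivial Weil divisor classes are precisely the pullbacks $\mathrm{alb}^*\lambda$, $\lambda\in\Pic^0(\mathbf{Alb})$, along the Albanese rational map. Since $\mathrm{alb}_{X'}=\mathrm{alb}_X\circ\beta$ and $\beta_*$ inverts strict transform on the common open locus, $\beta_*$ intertwines $\mathrm{alb}_{X'}^*$ and $\mathrm{alb}_X^*$, hence corresponds to the identity on $\Pic^0(\mathbf{Alb})$ under the two identifications; this gives bijectivity of $\beta_*|_{\Cl^0}$ at once, and in particular $\langle[E_i]\rangle\cap\Cl^0(X')=0$. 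Without this (or some substitute argument), the key step is asserted rather than proved. Separately, note that the direct sum $\bigoplus_i\mathbb Z[E_i]$ in the statement also requires the classes $[E_i]$ to be linearly independent in $\Cl(X')$, which you never verify and which does not follow from $\langle[E_i]\rangle\cap\Cl^0(X')=0$ alone; this part at least is easy: if $\sum_i a_iE_i=\mathrm{div}_{X'}(f)$, then $\mathrm{div}_X(f)=\beta_*\mathrm{div}_{X'}(f)=0$, so by normality and properness $f\in H^0(X,\mathcal O_X^*)=k^*$ is constant, whence $\sum_i a_iE_i=0$ as a cycle and all $a_i=0$.
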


Base changing to a larger algebraically closed field may enlarge $\Cl^0$ but will not affect $\Clns$.
\begin{lem}\label{lem:clns_field_extn}
Let $X$ be a normal variety over an algebraically closed field $k$. Then $\Clns(X_{\mathbb K})\cong\Clns(X)$ for any algebraically closed field $\mathbb K$ containing $k$.
\end{lem}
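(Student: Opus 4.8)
The plan is to exhibit mutually inverse maps between $\Clns(X)$ and $\Clns(X_{\mathbb K})$, working modulo algebraic equivalence via a constant family over a curve. First I would reduce to the case that $\mathbb K$ has finite transcendence degree over $k$: writing $\mathbb K$ as the filtered union of its algebraically closed subfields $L\supseteq k$ of finite transcendence degree, a prime divisor on $X_{\mathbb K}$, a rational function exhibiting a linear equivalence, and a family exhibiting an algebraic equivalence are each finite data defined over some such $L$. Hence $\Cl(X_{\mathbb K})=\varinjlim_L\Cl(X_L)$, and likewise $\Cl^0$ and the quotient $\Clns$ commute with this filtered colimit, so it suffices to treat each $\Clns(X)\to\Clns(X_L)$. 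By induction on transcendence degree, inserting an intermediate algebraically closed field at each step, this reduces to $\mathbb K=\overline{k(B)}$ for a smooth affine curve $B/k$ with geometric generic point $\overline\eta=\Spec\mathbb K$; I run the argument over an arbitrary algebraically closed base so the induction step applies. The key object is then the constant family $\pi\colon X\times B\to B$, whose geometric generic fiber is $X_{\mathbb K}$ and whose closed fibers $X\times\{b\}$ are $X$.

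For surjectivity, given a prime divisor $P'\subseteq X_{\mathbb K}$ I would take its closure $\mathcal P$ in $X\times B$; after shrinking to a dense open $U\subseteq B$, generic flatness makes $\mathcal P\to U$ flat with fibers $\mathcal P_b$ prime divisors on $X\times\{b\}=X$. Fix $b\in U(k)$ and set $D:=\mathcal P_b$. Base changing to $\mathbb K$, the relative divisor $\mathcal P_{\mathbb K}$ on $X_{\mathbb K}\times_{\mathbb K}U_{\mathbb K}$ over the connected $\mathbb K$-variety $U_{\mathbb K}$ has fiber $D_{\mathbb K}$ over the $\mathbb K$-point $b$ and fiber $P'$ over the $\mathbb K$-point $\overline\eta$; since $\mathbb K$ is algebraically closed these are two closed points of a connected parameter variety, so $P'\equiv_{\mathrm{alg}}D_{\mathbb K}$ and $[P']=[D_{\mathbb K}]$ in $\Clns(X_{\mathbb K})$. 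Thus every class comes from $\Cl(X)$.

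For injectivity, suppose $D$ on $X$ has $D_{\mathbb K}\in\Cl^0(X_{\mathbb K})$, i.e. algebraically equivalent to $0$. By definition this is witnessed by a connected smooth curve $T/\mathbb K$, a divisor $\Gamma$ on $X_{\mathbb K}\times_{\mathbb K}T$ flat over $T$, points $t_0,t_1\in T(\mathbb K)$, and a rational function $f$ with $(f)=\Gamma_{t_0}-\Gamma_{t_1}-D_{\mathbb K}$. All of this finite data is defined over a finite extension of $k(B)$, and enlarging $B$ I may assume it is defined over $k(B)$; spreading out over a dense open $U\subseteq B$ gives a family $\mathcal T\to U$, a relative divisor $\mathcal G$, sections $\sigma_0,\sigma_1$, and a rational function recovering the above over $\overline\eta$. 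The crucial point is that the spread of $D_{\mathbb K}$ is the \emph{constant} family $D\times U$, whose restriction to any closed fiber is $D$ itself. Specializing at a general $b\in U(k)$ therefore yields on $X$ a linear equivalence $(\mathcal G_b)_{\sigma_0(b)}-(\mathcal G_b)_{\sigma_1(b)}\sim D$ whose two left-hand terms are fibers of $\mathcal G_b$ over the connected curve $\mathcal T_b$, hence algebraically equivalent; so $D\in\Cl^0(X)$. Equivalently, restriction to a closed fiber is a retraction of base change that respects $\Cl^0$.

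The main obstacle is making the spreading-out rigorous: I must shrink $U$ so that $\mathcal P\to U$ (resp.\ $\mathcal G\to\mathcal T$ and $\mathcal T\to U$) is flat with geometrically connected, divisorial fibers, and so that the finitely many rational functions and sections stay regular over $U$ and restrict correctly at the chosen $b$. This is routine via generic flatness and constructibility, but it is where all the care lies; everything else is forced by the single feature that the pullback family is constant, which makes the generic geometric fiber and a general closed fiber literally comparable. I note that one could instead phrase $\Cl^0(X_{\mathbb K})=\mathbf{Cl}^0(X)(\mathbb K)$ using Theorem~\ref{thm:structure_class_group} together with the base-change compatibility $\mathbf{Cl}^0(X_{\mathbb K})=\mathbf{Cl}^0(X)_{\mathbb K}$ of the abelian variety, but working directly from the definition of algebraic equivalence avoids invoking it.
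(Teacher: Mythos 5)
Your proposal is correct, and it takes a genuinely different route from the paper. The paper's own proof is essentially two citations: by \cite[124.3]{kol_determines} there is a proper normal variety $X'$ birational to $X$ with $\Cl^0(X')=\Pic^0(X')$, so that Maulik--Poonen's base-change invariance $\NS(X'_{\mathbb K})\cong\NS(X')$ \cite[Proposition 3.1]{mp12} applies to it, and this is combined (implicitly) with the birational invariance of $\Clns$ (Lemma~\ref{lem:cl_birational}) to transfer the conclusion between $X'$ and $X$ over both fields, the exceptional divisors remaining irreducible under base change of algebraically closed fields. You instead prove the invariance directly at the level of Weil divisors and algebraic equivalence: the filtered-colimit reduction to finite transcendence degree, the induction reducing to $\mathbb K=\overline{k(B)}$ for a curve $B$ over an arbitrary algebraically closed base, and then the constant family $X\times B\to B$, whose geometric generic fiber is $X_{\mathbb K}$ and whose closed fibers are copies of $X$, so that spreading out a divisor (for surjectivity) or an algebraic equivalence plus a linear equivalence (for injectivity) and specializing gives both directions of the comparison. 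What each approach buys: the paper's is short and leans on known structural results; yours is self-contained, uniform in all characteristics, and avoids both the existence of the special model $X'$ (the nontrivial input standing in for resolution in characteristic $p$) and the Picard-scheme representability underlying the $\NS$ statement --- at the cost of the spreading-out bookkeeping, which you correctly identify as where the care lies.

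One step you should make explicit: in the surjectivity half, a prime divisor $P'\subset X_{\mathbb K}$ is in general defined only over a finite extension of $k(B)$, so the closure $\mathcal P\subset X\times B$ has geometric generic fiber equal to the sum of the Galois conjugates of $P'$ rather than $P'$ itself, and its fibers $\mathcal P_b$ need not be prime; as written, your specialization would only hit the class of the sum of conjugates. The fix is to first replace $B$ by a finite cover $B'$ over whose function field $P'$ is defined --- harmless, since $\overline{k(B')}=\mathbb K$ and closed points of $B'$ are still $k$-points --- which is exactly the ``enlarging $B$'' move you already perform in the injectivity half. With that adjustment the argument is complete.
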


\begin{proof}
By \cite[124.3]{kol_determines} there is a normal proper variety $X'$ that is birational to $X$ and such that $\Cl^0(X')=\Pic^0(X')$, and $\NS(X'_{\mathbb K})\cong\NS(X')$ by \cite[Proposition 3.1]{mp12}.
\end{proof}

\subsection{Very general}\label{sec:very_general}

Let $X$ be a variety defined over an algebraically closed field $k$. A general choice will mean one made outside a finite union of proper closed $k$-subvarieties of the parameter space. When we use the term ``very general" without any further specification, we refer to a choice made in a nonempty subset given by the complement of a countable union of proper closed $k$-subvarieties. If $k$ is uncountable then the complement of such a countable union automatically contains a $k$-point.

In the latter section of this article, in order to make our very general choices more concrete, we will assume our algebraically closed field $\mathbb K$ has infinite transcendence degree, fix a field of definition, and make choices very general relative to this fixed field. In Weil's terminology $\mathbb K$ is a universal domain \cite[Chapter I, \S1]{weil-foundations}.
Although in practice the only countable fields this adds are the algebraic closures of $\mathbb Q(t_i \mid i\in\mathbb N)$ and $\mathbb F_p(t_i \mid i\in\mathbb N)$, it clarifies some instances when we need to make simultaneously very general choices.

\begin{defn}[{\cite[Chapter IV, \S1; Chapter IX, \S6]{weil-foundations}}]\label{defn:weil-fields}
Let $\mathbb K$ be a field and $X$ a variety over $\mathbb K$.
Then $X$ can be defined over a field that is finitely generated over the prime subfield, and when we say a \defi{field of definition} $k_0$ for $X$, we mean an algebraically closed field $k_0\subset\mathbb K$ over which $X$ is defined and such that $k_0$ has finite transcendence degree over the prime subfield ($\mathbb Q$ or $\mathbb F_p$). Note that $k_0$ is algebraic over a finitely-generated field and in particular is countable.

The Chow variety of $X$ is defined over $k_0$. A subvariety $Y$ of $X$ is given by a $\mathbb K$-point of $\mathrm{Chow}(X)$, and
$Y$ is said to be defined over a subfield $k_0\subset k'\subset\mathbb K$ if it is of the form $Y=Y'\otimes_{k'} \mathbb K$ for a subscheme $Y'$ of $X\otimes_{k_0} k'$. There is a unique smallest field of definition $k^Y$ of $Y$ in $X$, which is the residue field of the morphism to $\Spec\mathbb K\to\mathrm{Chow}(X)$ giving $Y$ \cite[3.18.1 and Definition 3.21]{kol_moduli_book}. 
Note that $k^Y$ is finitely generated and is generally not algebraically closed.
If the coordinates of the $\mathbb K$-point $[Y]$ do not satisfy any algebraic equation over $k_0$ (or equivalently if the transcendence degree of $k^Y$ over $k_0$ is equal to the dimension of the component of the Chow variety containing $[Y]$), we will say that $Y$ is \defi{very general over $k_0$}.
If $\mathbb K$ is assumed to have infinite transcendence degree over the prime subfield then we can ensure that such $Y$ exist.

We will say that $Y_1,\ldots,Y_r$ are \defi{very general over $k_0$} if each $Y_i$ is very general over $k_0$ and the compositum of the fields $k^{Y_j}$ for $j\neq i$. In Weil's language, the fields $k^{Y_1},\ldots,k^{Y_r}$ are ``independent" or ``free with respect to each other" \cite[Chapter I, \S2]{weil-foundations}.
\end{defn}

A very general choice over $k_0$ avoids any finite union of proper $k_0$-subvarieties.

Let $X$ be a variety over a field $\mathbb K$ of infinite transcendence degree, and let $\mathcal Y\to B$ be a family of complete intersections in $X$. For $b\in B(\mathbb K)$ let $Y_b$ be the corresponding fiber. If we show that the restriction map $\Cl(X)\to\Cl(Y_b)$ is surjective up to torsion whenever $b$ is very general over a field of definition for $X$, then we will have shown that the restriction map to the class group of the geometric generic member $\mathcal Y_{\overline{\eta}}$ of the family is surjective up to torsion. Over an uncountable field, this is equivalent to the result for very general fibers, i.e. outside of a countable union of proper subvarieties of $B$, see \cite[Section 3]{rs09}.

\subsection{Thin sets and N\'eron's theorem}\label{sec:n'eron}

Here we review some relevant facts about N\'eron's specialization theorem that we will use in Section~\ref{sec:inj}.

\begin{defn}[{\cites[Chapter 3]{serre92}[Chapter 12]{fm08}}]\label{defn:thin_set}
Let $k_1$ be a field that is finitely generated over its prime subfield.
A subset $T\subset\mathbb P^m_{k_1}(k_1)$ is \defi{thin} if $T=f(X(k_1))$ for some $k_1$-variety $X$ and morphism $f\colon X\to\mathbb P^m_{k_1}$ that is separable and generically finite onto its image and that admits no rational section.

Every thin subset of $\mathbb P^m_{k_1}(k_1)$ is contained in a finite union of the following two types of thin sets:\begin{enumerate}\item Thin subsets contained in proper subvarieties, and \item $f(X(k_1))$ for some separable dominant morphism $f\colon X\to\mathbb P^m_{k_1}$ with $\dim X=m$ and $\deg(f)\geq 2.$\end{enumerate}
If $k_1$ is a finitely-generated field not contained in $\overline{\mathbb F}_p$, then $\mathbb P^m_{k_1}(k_1)$ is not thin for $m\geq 1$ \cites[Theorem 3.4.1]{serre92}[Theorem 12.10]{fm08}.
Following \cite{kol_determines}, for an arbitrary field $k$ we say that $T\subset\mathbb P^m_k(k)$ is \defi{field-locally thin} if $T\cap\mathbb P^m(k_1)$ is thin for every finitely-generated subfield $k_1\subset k$.
\end{defn}

\begin{thm}[{\cite[Theorem 6]{neron52}}]\label{thm:neron's_theorem_av}
Let $k$ be a field, $U\subset\mathbb P^m_k$ an open subset, and $\mathcal A\to U$ an abelian scheme.
Assume the Mordell--Weil group $\mathcal A_\eta(K)$ is finitely generated, where $K$ is the function field of $\mathbb P^m_k$ and $\mathcal A_\eta$ is the generic fiber of $\mathcal A$.

Then there exists a subset $N\subset U(k)$ containing the complement of a field-locally thin set and such that the specialization map \[\xymatrix{ \mathcal A_\eta(K) \ar[r] & \mathcal A_b(k)}\]
is injective for $b\in N$.
\end{thm}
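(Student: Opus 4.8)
The plan is to realize the locus of $b\in U(k)$ where the specialization map $\sigma_b\colon\mathcal A_\eta(K)\to\mathcal A_b(k)$ fails to be injective as a field-locally thin set, and then to take $N$ to be its complement. Write $\Gamma:=\mathcal A_\eta(K)$, which is finitely generated by hypothesis, so $\Gamma\cong\mathbb Z^r\oplus\Gamma_{\mathrm{tors}}$ with $\Gamma_{\mathrm{tors}}$ finite. By a standard lemma on specialization of finitely generated groups, $\sigma_b$ is injective provided (a) $\sigma_b$ is injective on $\Gamma_{\mathrm{tors}}$, and (b) the images under $\sigma_b$ of a basis $g_1,\dots,g_r$ of the free quotient $\Gamma/\Gamma_{\mathrm{tors}}$ are $\mathbb Z$-independent in the torsion-free group $M_b:=\mathcal A_b(k)/\mathcal A_b(k)_{\mathrm{tors}}$; for (b) it suffices that they are $\mathbb F_\ell$-independent in $M_b/\ell M_b$ for a single fixed prime $\ell$, since $\mathbb F_\ell$-independence in a torsion-free group forces $\mathbb Z$-independence. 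I would fix once and for all a prime $\ell\neq p$ and a finitely generated field $k_0$ over which $\mathcal A\to U$, the zero section, and the finitely many sections appearing below are all defined.

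For (a): each nonzero $t\in\Gamma_{\mathrm{tors}}$ gives a section $t\colon U\to\mathcal A$ that is not identically the zero section, so the locus where $t$ meets the zero section is a proper closed subset of $U$. As $\Gamma_{\mathrm{tors}}$ is finite, the union of these loci is a proper closed subset, a thin set of the first type in Definition~\ref{defn:thin_set}.

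For (b): failure of $\mathbb F_\ell$-independence at $b$ means that for some class $\bar g\neq0$ in $\Gamma/(\ell\Gamma+\Gamma_{\mathrm{tors}})\cong\mathbb F_\ell^r$ and some $t\in\Gamma_{\mathrm{tors}}$ one has $\sigma_b(g-t)\in\ell\,\mathcal A_b(k)$, where $g$ is a fixed representative of $\bar g$. Pulling back the multiplication map $[\ell]\colon\mathcal A\to\mathcal A$ along the section $g-t\colon U\to\mathcal A$ produces a finite étale $\mathcal A[\ell]$-torsor $W_{g-t}\to U$ (étale because $\ell\neq p$), and $\sigma_b(g-t)\in\ell\,\mathcal A_b(k)$ exactly when $b$ lifts to a $k$-point of $W_{g-t}$. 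The key point is that $W_{g-t}\to U$ admits no rational section: a section would exhibit $g-t$ as $\ell$ times an element of $\mathcal A_\eta(K)=\Gamma$, that is $g-t\in\ell\Gamma$, contradicting $\bar g\neq0$. Hence every component of $W_{g-t}$ has degree $\geq2$ over $U$, so the image of its $k$-points is thin of the second type. Since there are only finitely many pairs $(\bar g,t)$ to consider --- this finiteness is exactly where the finite generation of $\mathcal A_\eta(K)$ enters --- the locus of (b)-failure is contained in a finite union of thin sets.

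Combining, the bad locus is contained in a finite union of thin subsets defined over $k_0$; for any finitely generated $k_1\subseteq k$, which we may enlarge to contain $k_0$, its intersection with $\mathbb P^m(k_1)$ is thin, so the bad locus is field-locally thin and $N$ is its complement. The main obstacle is the Hilbert-irreducibility input --- that a finite separable cover with no rational section has thin image --- together with the bookkeeping needed to reduce the a priori infinitely many potential relations to the finitely many Kummer torsors $W_{g-t}$; the finite generation hypothesis is precisely what makes this reduction possible. A secondary point requiring care is the characteristic: one must take $\ell\neq p$ so that $W_{g-t}\to U$ is étale, and one must remember that the thin-set machinery is only meaningful over finitely generated fields $k_1\not\subseteq\overline{\mathbb F}_p$, which is exactly what the field-local formulation accommodates.
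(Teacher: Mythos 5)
The paper does not actually prove this statement---it is quoted from N\'eron's thesis (with Serre's treatment of thin sets as the modern reference)---so I am evaluating your argument on its own terms. Your overall strategy (torsion handled by proper closed subsets, free part handled by Kummer covers $[\ell]^{-1}(\cdot)$ with no rational section, Hilbert-irreducibility giving thinness of their images) is the right one, but there is a genuine gap in the step where you characterize failure of $\mathbb F_\ell$-independence. Failure of independence in $M_b/\ell M_b$ means $\sigma_b(g)\in\ell\,\mathcal A_b(k)+\mathcal A_b(k)_{\mathrm{tors}}$ for some $g$ representing a nonzero class; you replace $\mathcal A_b(k)_{\mathrm{tors}}$ by $\sigma_b(\Gamma_{\mathrm{tors}})$, i.e.\ you assume the torsion witness is a specialization of a \emph{generic} torsion point. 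That is false in general: the special fiber can acquire rational torsion that does not come from $\mathcal A_\eta(K)$. Concretely, take $\Gamma=\mathbb Z P$ torsion-free of rank one and a point $b$ at which $P(b)$ is a nonzero point of order $\ell$ with $P(b)\notin\ell\,\mathcal A_b(k)$. Then $\ell P$ lies in $\ker\sigma_b$, so $b$ is a bad point; yet $b$ lies on none of your covers $W_{nP}$, since $nP(b)\in\ell\,\mathcal A_b(k)$ with $\ell\nmid n$ would force $P(b)\in\ell\,\mathcal A_b(k)$. So the bad locus is not contained in the union of thin sets you construct, and the finitely many pairs $(\bar g,t)$ do not suffice.

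The missing ingredient is a uniform bound on the $\ell$-primary torsion of the fibers away from a thin set, and this is where finite generation of $\mathcal A_\eta(K)$ must be used a second time. Since $\Gamma_{\mathrm{tors}}$ is finite, choose $c_0$ so that $\mathcal A_\eta(K)$ has no point of exact order $\ell^{c_0}$; then the finite \'etale cover $\mathcal A[\ell^{c_0}]\setminus\mathcal A[\ell^{c_0-1}]\to U$ has no rational section (a rational section extends to a generic torsion point of that order), so the locus of $b$ for which $\mathcal A_b(k_1)$ contains a point of exact order $\ell^{c_0}$---equivalently, of exact order $\ell^c$ for any $c\geq c_0$---is thin. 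Off this locus the $\ell$-primary torsion of every fiber has exponent dividing $\ell^{c_0-1}$, and one then uses the deeper Kummer covers $[\ell^{c_0}]^{-1}(\ell^{c_0-1}g)$ in place of $[\ell]^{-1}(g-t)$: if some nonzero $f\in\Gamma$ specializes to $0$, writing $f=\ell^a f'$ with $f'\notin\ell\Gamma+\Gamma_{\mathrm{tors}}$ shows $\sigma_b(f')$ is $\ell$-power torsion, hence $\sigma_b(\ell^{c_0-1}f')=0\in\ell^{c_0}\mathcal A_b(k_1)$, so $b$ lifts to one of these covers; and these covers have no rational section because $\ell^{c_0-1}f'\in\ell^{c_0}\Gamma$ would force $f'\in\ell\Gamma+\Gamma_{\mathrm{tors}}$. (A secondary point: to get field-local thinness you should run this entire argument over each finitely generated $k_1\supseteq k_0$, using that all sections and covers are defined over $k_0$ and that non-injectivity into $\mathcal A_b(k)$ is equivalent to non-injectivity into $\mathcal A_b(k_1)$; your version mixes $k$-points of the covers with $k_1$-points of the base.) With these modifications one recovers N\'eron's proof as presented by Serre.
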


\subsection{Bertini theorems}\label{sec:bertini}

In this section we recall some Bertini theorems for very ample divisors in arbitrary characteristic. We also show that if a line bundle is sufficiently positive, then the reducible locus is small (Lemma~\ref{lem:reducible_in_codim_2}) and the sectional genus is large (Lemma~\ref{lem:cokernel_to_Jac(C)}).

\begin{thm}[{\cite[Theorem 3.4.10]{fov99}}]\label{thm:bertini_irreducibility}
Let $X$ be a variety over an infinite field $k$, $D$ a Cartier divisor on $X$, and $|V|\subset|D|$ a finite-dimensional linear system. If \begin{enumerate}\item $|V|$ is not composed with a pencil (meaning the image of $X$ under the morphism defined by $|V|$ has dimension $\geq 2$), and \item $\codim_X\mathrm{Bs}(|V|)\geq 2$,\end{enumerate}then a general member of $|V|$ is irreducible.
\end{thm}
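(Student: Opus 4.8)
The plan is to pass to the universal member of the linear system and reduce the assertion to a geometric irreducibility statement for its generic fibre, which I would then control through a connectedness theorem for hyperplane sections of the image of $X$. Write $r=\dim|V|$, let $\phi\colon X\dashrightarrow\mathbb P^r$ be the rational map attached to $|V|$ --- a morphism on $U:=X\setminus\Bs(|V|)$ --- and set $Y:=\overline{\phi(X)}\subseteq\mathbb P^r$, so hypothesis (1) reads $\dim Y\geq 2$. Parametrising members by $B:=(\mathbb P^r)^\vee\cong|V|$, with $D_b=\phi^{-1}(H_b)$ on $U$ for the hyperplane $H_b$, I form the universal member $\mathcal D\subset X\times B$ with projections $q\colon\mathcal D\to X$ and $\pi\colon\mathcal D\to B$. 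Over $x\in U$ the fibre $q^{-1}(x)=\{b:\phi(x)\in H_b\}$ is a hyperplane in $B$, so $q^{-1}(U)$ is a $\mathbb P^{r-1}$-bundle over the irreducible $U$, hence irreducible of dimension $\dim X+r-1$; over $\Bs(|V|)$ the fibre of $q$ is all of $B$, contributing only dimension $\dim\Bs(|V|)+r$, which hypothesis (2) makes strictly smaller. Thus $q^{-1}(U)$ is dense, $\mathcal D$ is irreducible, and the generic fibre $D_\eta=\pi^{-1}(\eta)$ is irreducible over $L:=k(B)$. Since the locus of $b$ with geometrically irreducible fibre is constructible and $B(k)$ is dense ($k$ being infinite), it suffices to prove that $D_\eta$ is \emph{geometrically} irreducible, i.e.\ that $L$ is algebraically closed in $k(\mathcal D)$. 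This is exactly where hypothesis (1) must enter: if $|V|$ were composed with a pencil, then $D_\eta$ would still be irreducible over $L$ but would split into $\deg Y$ conjugate pieces over $\overline L$.

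For the geometric step I would factor $\phi$ through the normalisation $\nu\colon Y'\to Y$ of $Y$ in the separable algebraic closure of $k(Y)$ in $k(X)$, giving $X\xrightarrow{\psi}Y'\xrightarrow{\nu}Y$ with $\nu$ finite, $Y'$ irreducible (as $X$ is), and $\psi$ having geometrically irreducible generic fibre. A general member is then $D_b=\psi^{-1}(W_b)$ with $W_b:=\nu^{-1}(Y\cap H_b)$, and because $\psi$ has geometrically irreducible generic fibre, $D_b$ is geometrically irreducible as soon as $W_b$ is. Now $W_b$ is the preimage under the finite morphism $\nu$ of the hyperplane section $Y\cap H_b$ of the irreducible variety $Y$ of dimension $\geq 2$; it is therefore irreducible once (i) $Y\cap H_b$ is geometrically irreducible and (ii) the connected finite cover $\nu$ stays connected over it. Both are instances of the single Lefschetz-type surjectivity $\pi_1(Y\cap H)\twoheadrightarrow\pi_1(Y)$ valid for $\dim Y\geq 2$ --- the Lefschetz hyperplane theorem on fundamental groups in characteristic $0$, and its étale analogue from \cite{sga2} in arbitrary characteristic --- which forbids any connected cover of $Y$ (in particular $Y$ itself and the cover determined by $Y'$) from disconnecting upon restriction to a general hyperplane section.

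The genuine obstacle is precisely this connectedness/monodromy input. Irreducibility of the total space $\mathcal D$ (or of the universal hyperplane section of $Y$) is a pure dimension count and yields only irreducibility of $D_\eta$ over $L$; ruling out a nontrivial splitting over $\overline L$ --- equivalently a degree $\geq 2$ cover in the Stein factorisation of $\pi\colon\mathcal D\to B$ --- cannot be detected dimensionally and requires the $\pi_1$-surjectivity above. I would invoke that standard fact (the point at which the reference \cite{fov99} does the real work) rather than reprove it. By contrast, the remaining bookkeeping is routine: verifying the identification $D_b=\psi^{-1}(W_b)$ and, through hypothesis (2), that $\Bs(|V|)$ contributes no component to $\mathcal D$, together with the observation that in positive characteristic one may harmlessly discard the purely inseparable part of $k(X)/k(Y)$ since only the underlying topological space of $D_\eta$ is at issue.
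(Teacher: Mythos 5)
The paper does not actually prove Theorem~\ref{thm:bertini_irreducibility}: it is quoted from \cite[Theorem 3.4.10]{fov99}, so your proposal has to be measured against the standard proofs in the literature (Jouanolou's \emph{Th\'eor\`emes de Bertini et applications}, Th\'eor\`eme 6.10, or \cite{fov99} itself) rather than against anything in the text. Your first half is correct and is indeed how those proofs begin: the universal member $\mathcal D\subset X\times B$ is an effective Cartier divisor on the irreducible $X\times B$, so every component has dimension $\dim X+r-1$; hypothesis (2) forbids components over $\Bs(|V|)\times B$; hence $\mathcal D=\overline{q^{-1}(U)}$ is irreducible and $D_\eta$ is irreducible over $L$. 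The reduction to \emph{geometric} irreducibility of $D_\eta$ via constructibility and density of $B(k)$ is also fine (granting the convention that $X$ is geometrically irreducible, without which that reduction replaces the goal by something false), as is the identification of hypothesis (1) as what must prevent splitting over $\overline L$, and the factorization $X\dashrightarrow Y'\xrightarrow{\ \nu\ }Y$ through the separable closure of $k(Y)$ in $k(X)$.

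The genuine gap is the sentence asserting that $W_b=\nu^{-1}(Y\cap H_b)$ ``is therefore irreducible once (i) $Y\cap H_b$ is geometrically irreducible and (ii) the connected finite cover $\nu$ stays connected over it.'' That implication is false: a connected scheme finite and surjective over an irreducible variety can perfectly well be reducible, with its components meeting one another over the branch locus, and excluding this for general $H_b$ is precisely Bertini irreducibility for the finite morphism $Y'\to\mathbb P^r$ --- the very theorem being proved. So at the one point where hypothesis (1) must do real work, the argument is circular. Nor can the tool you invoke repair it: surjectivity $\pi_1(Y\cap H)\twoheadrightarrow\pi_1(Y)$ is a statement about \'etale covers, whereas $\nu$ is in general ramified (it is a normalization in a field extension), so it does not yield your point (ii) --- connectedness of $\nu^{-1}(Y\cap H_b)$ is true, but because $\nu^*\mathcal O(1)$ is ample and the support of an effective ample divisor on an irreducible projective variety of dimension $\geq 2$ is connected --- and point (i) is the classical hyperplane-section Bertini theorem, not a $\pi_1$ statement at all. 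In characteristic $0$ your reduction can be closed without monodromy, since Bertini normality for the base-point-free system $\nu^*|\mathcal O(1)|$ on the normal $Y'$ makes $W_b$ normal, and a connected normal scheme is irreducible; but in characteristic $p$ Bertini normality fails for base-point-free systems (Frobenius-type examples), and then the only known route is the monodromy/Stein-factorization argument on the universal family --- showing that the normalization $B'\to B$ of $B$ in the algebraic closure of $L$ inside $k(\mathcal D)$ has degree $1$, using that the fibers $q^{-1}(x)\cong\mathbb P^{r-1}$ supply rational sections of $B'\to B$ over the dual hyperplanes of the points of the $\geq 2$-dimensional $Y$ --- or, in your formulation, a Lefschetz theorem on the \'etale locus of $\nu$ inside the smooth locus of $Y$, i.e.\ for \emph{quasi-projective} varieties, together with a dimension count showing no component of $W_b$ lies over the non-\'etale locus. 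None of that is the routine bookkeeping to which your sketch defers it; it is exactly the work that \cite{fov99} does.
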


\begin{thm}[{\cite[Theorem 3.4.14]{fov99}}]\label{lem:bertini_va}
Let $X\subset\mathbb P^N_k$ be a projective scheme over an infinite field $k$. If $X$ is regular (resp. normal, reduced, regular in codimension $c$), then for a general hyperplane $H\subset\mathbb P^N_k$, the intersection $H\cap X$ has the same property.
\end{thm}

\begin{thm}[{\cite[Corollary 3.4]{gk19}}]\label{lem:bertini_normal}
Let $k$ be an infinite field, $X$ a normal equidimensional quasi-projective $k$-scheme of dimension $\geq 1$, and $X\subset\mathbb P^N_k$ a locally closed embedding. Then a general hypersurface $H\subset\mathbb P^N_k$ satisfies \begin{enumerate}\item $X\cap H$ is normal, and \item $X^{\mathrm{reg}}\cap H$ is regular.\end{enumerate}
\end{thm}

We will need to use Grothendieck's connectedness lemma in the proof of Lemma~\ref{lem:reducible_in_codim_2} below.
\begin{lem}[{\cite[Expos\'e XIII, Theorem 2.1]{sga2}}
]\label{lem:loc.conn.1.lem}
Let $(x,X)$ be a local, excellent, $S_2$ scheme of pure dimension $\geq 3$ and $x\in D\subset X$ a Cartier divisor. Then $D\setminus\{x\}$ is connected.
\end{lem}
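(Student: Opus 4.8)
The plan is to reformulate the statement in terms of the \emph{connectedness dimension} of local schemes and to exploit that the $S_2$ hypothesis forces $\Spec(\mathcal O_{X,x})$ to be connected in codimension one. Write $A=\mathcal O_{X,x}$, a local, excellent, $S_2$ ring of pure dimension $d\geq 3$, and let $t\in\mathfrak m_A$ be the local equation of $D$, so that $t$ is a nonzerodivisor and $B:=A/tA=\mathcal O_{D,x}$. Since $B$ is local, $\Spec B$ is connected, and the assertion is precisely that $\Spec B\setminus\{\mathfrak m_B\}$ is connected. First I would reduce to the case that $A$ is complete: because $A$ is excellent, the completion $\widehat A$ is again $S_2$, equidimensional, of dimension $d$, with $t$ still a nonzerodivisor; and since $B\to\widehat B$ is faithfully flat, any disconnection of $\Spec B\setminus\{\mathfrak m_B\}$ pulls back to a disconnection of $\Spec\widehat B\setminus\{\mathfrak m\}$, so it suffices to treat $\widehat A$.

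For a Noetherian local scheme $Y$ let $c(Y)$ denote its connectedness dimension, i.e. the minimum of $\dim Z$ over closed subsets $Z$ whose removal disconnects $Y$. The proof then rests on two standard inequalities. First, since $A$ is $S_2$ and equidimensional, $\Spec A$ is connected in codimension one, giving $c(\Spec A)\geq d-1$; this is the geometric content of Serre's condition $S_2$ and is where that hypothesis is used. Second, cutting by the single equation $t$ lowers the connectedness dimension by at most one, so $c(D)=c(\Spec B)\geq c(\Spec A)-1\geq d-2$. Because $d\geq 3$ we obtain $c(D)\geq 1$, which says exactly that $D$ cannot be disconnected by deleting a closed subset of dimension $0$. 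As $\{x\}$ is such a subset and $D$ itself is connected, $D\setminus\{x\}$ is connected, as desired. This is also precisely where the hypothesis $\dim\geq 3$ enters: for $d=2$ the bound degenerates to $c(D)\geq 0$ and a single point can disconnect $D$.

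The main obstacle is not the final bookkeeping but establishing the two inequalities in the required generality. The delicate point is that $B=\mathcal O_{D,x}$ is only $S_1$ (cutting an $S_2$ ring by a nonzerodivisor drops depth by one), so connectedness of its punctured spectrum is genuinely not a depth statement about $B$ and cannot be read off from $\mathrm{depth}\,B\geq 2$; one really must import positivity from the ambient $S_2$ ring $A$. Concretely, I expect the work to go into proving $c(\Spec A)\geq d-1$ for $S_2$ equidimensional complete local rings and the hypersurface-section inequality $c(A/tA)\geq c(A)-1$; both are available in the literature (Grothendieck's SGA2, Expos\'e XIII, and the treatment of connectedness dimension in Brodmann--Sharp), and I would either cite them or reprove the codimension-one connectedness from the vanishing $H^0_{\mathfrak m}(A)=H^1_{\mathfrak m}(A)=0$ together with an analysis of how the minimal primes of $(t)$ are linked through the codimension-two points of $\Spec A$.
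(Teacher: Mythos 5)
The paper offers no proof of this lemma at all---it is imported verbatim with the citation to SGA2, Expos\'e XIII, Th\'eor\`eme 2.1---and your argument is precisely the standard derivation of that statement: Hartshorne-type connectedness for $S_2$ equidimensional complete local rings ($c(\Spec A)\geq d-1$) combined with Grothendieck's connectedness theorem for a hypersurface cut ($c(\Spec A/tA)\geq \min\{c(\Spec A),\dim A-1\}-1$), the latter being exactly the result the paper cites. Your reduction to the complete case (excellence preserving $S_2$ and, via Ratliff, equidimensionality), the descent of any disconnection along the faithfully flat completion, and your accounting of where $S_2$, pure dimension, and $\dim\geq 3$ enter are all correct, so the proposal is sound and coincides with the intended route.
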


In the proof of Theorem~\ref{thm:main_theorem}, we want to work with linear systems where the locus of reducible members has codimension $\geq 2$. This can sometimes fail, as in the case of $|\mathcal O_{\mathbb P^2}(2)|$ where the closure of the reducible plane conics is a divisor, but Lemma~\ref{lem:reducible_in_codim_2} will ensure these examples doesn't happen if the linear system is sufficiently positive.

\begin{lem}\label{lem:add_to_bir_lin_sys}
Let $X$ be a variety, $\mathcal L$ a line bundle and $\Gamma\subset H^0(X,\mathcal L)$ a non-zero subspace, and $\mathcal B$ a line bundle defining a birational map. Then the linear system $|\Gamma|+|\mathcal B|=\mathbb P \{r\otimes s \mid r\in \Gamma, s \in H^0(X,\mathcal B)\}\subset |\mathcal L\otimes\mathcal B|$ defines a birational map.
\end{lem}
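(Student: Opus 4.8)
The plan is to show that the natural map $\phi\colon X\dashrightarrow\mathbb P(|\Gamma+\mathcal B|^\vee)$ is generically injective by exploiting the birationality of the map $\psi\colon X\dashrightarrow\mathbb P(H^0(X,\mathcal B)^\vee)$ defined by $\mathcal B$ alone. The key observation is that the linear system $|\Gamma+\mathcal B|$ contains, for every fixed nonzero $r\in\Gamma$, the coset $r\otimes H^0(X,\mathcal B)$, i.e. a copy of $|\mathcal B|$ sitting inside it. Since $\mathcal B$ already separates generic points, adjoining these sections can only make the map more injective, not less.

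First I would fix a nonzero section $r_0\in\Gamma$ and consider the subsystem $r_0\otimes H^0(X,\mathcal B)\subset |\Gamma+\mathcal B|$. The rational map defined by this subsystem agrees, away from the zero locus $(r_0=0)$ and the base locus of $\mathcal B$, with the map $\psi$ defined by $|\mathcal B|$, up to the projective-linear identification coming from multiplication by $r_0$. Concretely, on the open dense set where $r_0$ does not vanish, the ratios of the sections $r_0\otimes b$ for $b\in H^0(X,\mathcal B)$ are exactly the ratios of the $b$, so this subsystem separates the same pairs of points that $\mathcal B$ does. Because a subsystem of $|\Gamma+\mathcal B|$ already separates two general points of $X$, the full system $|\Gamma+\mathcal B|$ separates them as well. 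Thus $\phi$ is generically injective on $X$.

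Next I would upgrade generic injectivity to birationality. Over an algebraically closed field, a dominant rational map that is generically injective on closed points is birational (in characteristic $p$ one must check it is also generically \'etale, i.e. separable); here separability is inherited from $\psi$, since on the open set $(r_0\neq 0)$ the map $\phi$ factors the birational map $\psi$ through a linear embedding of projective spaces followed by a linear projection, and composing a separable map with linear maps preserves separability. I would conclude that $\phi$ is birational onto its image, which is precisely the assertion that $|\Gamma+\mathcal B|$ defines a birational map.

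The main obstacle I anticipate is purely bookkeeping about base loci and the characteristic-$p$ subtlety: one must verify that the open set $(r_0\neq 0)$ minus the indeterminacy loci of both $\psi$ and $\phi$ is still dense, so that the comparison of the two maps takes place on a nonempty open set, and one must ensure that passing to the larger system does not introduce inseparability. Both are routine, since removing the vanishing locus of a single section and the base locus of a fixed line bundle removes only a proper closed subset, and separability is stable under the linear operations involved. The essential content is simply that enlarging a linear system that already defines a birational map keeps it birational, which the subsystem argument makes transparent.
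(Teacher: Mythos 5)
Your proof is correct and is essentially the paper's own argument: the paper chooses bases $r_0,\ldots,r_m$ of $\Gamma$ and $s_0,\ldots,s_N$ of $H^0(X,\mathcal B)$ and observes that the coordinate projection $\pi_i\circ\phi$ agrees, away from $(r_i=0)$, with the map defined by $|\mathcal B|$ --- which is exactly your comparison of the subsystem $r_0\otimes H^0(X,\mathcal B)$ with $\psi$. If anything, you make the last step more explicit than the paper does: from the generic factorization $\psi=\pi\circ\phi$ you deduce generic injectivity and separability (equivalently, $k(X)=\psi^*k(\psi(X))\subseteq\phi^*k(\phi(X))$ forces $\phi$ to be birational onto its image), whereas the paper's written proof stops at the conclusion $\dim\phi(X)=n$.
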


\begin{proof}Let $s_0,\ldots,s_N$ be a basis of $H^0(X,\mathcal B)$ and $r_0,\ldots,r_m$ a basis of $\Gamma$.
Then $\{r_i\otimes s_j \mid 0\leq i\leq m, 0\leq j\leq N\}$ is a basis of $|\Gamma|+|\mathcal B|$, so the rational map $\phi\colon X\dashrightarrow \mathbb P^{(m+1)(N+1)}$ it defines is given (up to isomorphism) by \[ x\mapsto [r_0s_0(x):r_0s_1(x):\cdots:r_0s_N(x):r_1s_0(x):\cdots:r_ms_N(x)]. \] Denote the coordinates of $\mathbb P^{(m+1)(N+1)}$ by $[y_{00}:y_{01}:\cdots:y_{mN}]$. For each $i$, define $\pi_i\colon\mathbb P^{(m+1)(N+1)}\dashrightarrow\mathbb P^N$ to be the projection onto the coordinates $[y_{i0}:y_{i1}:\cdots:y_{iN}]$.
Then $\pi_i\circ\phi$ agrees with the rational map defined by $|\mathcal B|$ (away from $(s_i=0)$) so its image has dimension $n$. So $\dim\phi(X)= n$. \end{proof}

\begin{lem}\label{lem:reducible_in_codim_2}
Let $X$ be a projective variety of dimension $n\geq 2$ over an algebraically closed field $k$, and assume either
\begin{enumerate}[label=(\alph*)]
\item
$X$ is normal, or
\item
$\dim\Sing(X)=0$.
\end{enumerate}
Let $\mathcal L_0$ be a very ample line bundle on $X$.
Then the set of reducible divisors in $|\mathcal L_0^{\otimes d}|$ is contained in a closed subscheme of $|\mathcal L_0^{\otimes d}|$ of codimension $\geq 2$ for
\begin{enumerate}
\item
$d\geq 2$ if the pair $(X,\mathcal L_0)$ is not $(\mathbb P^2,\mathcal O_{\mathbb P^2}(1))$, and
\item
$d\geq 3$ in general.
\end{enumerate}
\end{lem}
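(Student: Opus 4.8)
The plan is to bound the dimension of the family of reducible divisors $D = D_1 + D_2$ in $|\mathcal L_0^d|$ by parametrizing the two pieces and comparing the count against $\dim|\mathcal L_0^d|$. A reducible member corresponds to a factorization $D_1 + D_2$ where $D_i \in |\mathcal L_0^{d_i}|$ with $d_1 + d_2 = d$ and $d_1, d_2 \geq 1$; since such a decomposition lands in a proper linear subsystem and the choice of $(D_1, D_2)$ is constrained by $s_1 \otimes s_2 = s$ (which determines $s$ up to scalar from $s_1, s_2$), the locus of reducible divisors is the image of the natural multiplication map $\mu \colon \bigcup_{d_1+d_2=d} |\mathcal L_0^{d_1}| \times |\mathcal L_0^{d_2}| \to |\mathcal L_0^d|$. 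So the key is to show that $\dim\bigl(\operatorname{Im}\mu\bigr) \leq \dim|\mathcal L_0^d| - 2$.

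First I would compute the dimension of the source for each fixed splitting $(d_1, d_2)$: it is $h^0(X, \mathcal L_0^{d_1}) - 1 + h^0(X, \mathcal L_0^{d_2}) - 1$, and compare this to $h^0(X, \mathcal L_0^d) - 1$. Writing $P(m) = h^0(X, \mathcal L_0^m)$, the codimension of the image of the $(d_1,d_2)$-stratum is at least $P(d) - P(d_1) - P(d_2) + 1$, and I want this to be $\geq 2$ for every admissible splitting. The Hilbert-polynomial/superadditivity estimate $P(d_1) + P(d_2) \leq P(d) - \text{(positive correction)}$ should follow from the strict superadditivity of the Hilbert polynomial of a very ample line bundle in dimension $n \geq 2$: geometrically, the multiplication map on sections is far from surjective, and the gap grows with the leading term of $P$, which is $\frac{(\mathcal L_0^n)}{n!} m^n$. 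The worst case is the most balanced split, and also the split $(1, d-1)$; I would check these explicitly, using that $\mathcal L_0$ very ample forces $P(1) = h^0(X,\mathcal L_0) \geq n+1$ and that $P$ is a polynomial of degree $n \geq 2$ with positive leading coefficient.

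To convert the numerical gap into the stated bounds I would handle the exceptional case separately. For $(\mathbb P^2, \mathcal O(1))$ one has $P(m) = \binom{m+2}{2}$, and the split $(1, d-1)$ gives codimension exactly $1$ when $d=2$ (recovering the conic example in the remark), while $d \geq 3$ restores codimension $\geq 2$; this is precisely why part~(2) requires $d \geq 3$ in general but part~(1) allows $d \geq 2$ once we exclude $(\mathbb P^2, \mathcal O(1))$. For a general very ample $(X, \mathcal L_0)$ with $(\mathcal L_0^n)$ or $P(1)$ large enough, the gap already exceeds $1$ at $d = 2$. The remaining subtlety is that the image of $\mu$ is a union over all splittings, so I must take the minimum codimension over all $(d_1,d_2)$ and over all products of more than two factors; the multi-factor case only increases the codimension (it factors through a smaller subsystem), so the binary split dominates. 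Finally, since we only need the reducible locus contained in \emph{some} closed subscheme of codimension $\geq 2$, it suffices to bound the dimension of the constructible image $\operatorname{Im}\mu$ and take its closure.

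The main obstacle I expect is making the Hilbert-polynomial inequality fully uniform across all pairs $(X, \mathcal L_0)$ in the stated generality — in particular, handling normal $X$ (where $\mathcal L_0^m$ may have higher cohomology affecting $P(m)$ for small $m$) versus the isolated-singularity case, and pinning down exactly which low-degree or low-dimensional pairs force the split $(1,d-1)$ to drop the codimension to $1$. The cleanest route is probably to reduce to the behavior of the multiplication map $H^0(\mathcal L_0^{d_1}) \otimes H^0(\mathcal L_0^{d_2}) \to H^0(\mathcal L_0^d)$ and invoke Lemma~\ref{lem:add_to_bir_lin_sys} together with very ampleness to control irreducibility and the fiber dimension of $\mu$, ensuring no accidental positive-dimensional fibers inflate the image beyond the naive count.
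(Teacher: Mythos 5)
There is a genuine gap, and it sits at the very first step: the claim that every reducible member of $|\mathcal L_0^d|$ factors as $D_1+D_2$ with $D_i\in|\mathcal L_0^{d_i}|$ and $d_1+d_2=d$ is false for general $X$. The irreducible components of a divisor $D\in|\mathcal L_0^d|$ are arbitrary effective Weil divisors: their classes need not be proportional to $\mathcal L_0$ in $\Cl(X)$, and when $X$ is singular they need not even be Cartier, hence lie in no linear system of line bundles at all. Concretely, take $X=\mathbb P^1\times\mathbb P^1$ with $\mathcal L_0=\mathcal O(1,1)$ and $d=2$: a member of $|\mathcal O(2,2)|$ can split as a curve of type $(2,1)$ plus a ruling of type $(0,1)$, and no such divisor lies in the image of your multiplication map $\mu$, so bounding $\dim(\operatorname{Im}\mu)$ says nothing about this part of the reducible locus. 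The section-factorization picture $s=s_1\otimes s_2$ is available exactly when $\Cl(X)$ is generated by $\mathcal L_0$, i.e.\ essentially only for $\mathbb P^n$ --- and indeed the paper dispatches the case $(\mathbb P^n,\mathcal O_{\mathbb P^n}(1))$ by precisely your explicit dimension count, and then must assume $(X,\mathcal L_0)\neq(\mathbb P^n,\mathcal O(1))$ for the real work. To repair your approach you would have to stratify the reducible locus by the classes of the components in a Chow variety of divisors of bounded degree (only finitely many components of it occur, by boundedness), and then bound the dimension of the family of effective divisors in each such class together with the fibers of the addition map. For those families there is no Hilbert-polynomial formula, so your ``numerical gap'' computation has no analogue; this is exactly where the content of the lemma lies, and the proposal does not address it. The Hilbert-polynomial uniformity issue you flag as the main obstacle is not the obstacle.

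For contrast, the paper's proof never parametrizes the components. It embeds $X$ by $\mathcal L_0^d$, observes that a reducible ample divisor on a connected variety must be singular (Enriques--Severi--Zariski), and therefore that the reducible sections with a singular point on $X^{\mathrm{reg}}$ all lie in the dual variety of $X$, which is irreducible of codimension $\geq 1$ in $|\mathcal L_0^d|$. It then shows, via Lemma~\ref{lem:add_to_bir_lin_sys} and the Bertini irreducibility theorem (Theorem~\ref{thm:bertini_irreducibility}), that for each smooth point $x$ the \emph{general} divisor in $|\mathcal L_0^d|$ singular at $x$ is irreducible; so the reducible locus is a proper closed subset of the irreducible dual variety, hence has codimension $\geq 2$. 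A separate argument (isolated singularities, or Grothendieck's connectedness lemma forcing $H$ to contain a positive-dimensional component of $\Sing(X)$) handles divisors whose singularities lie entirely in $\Sing(X)$. Your strategy, by contrast, can only ever see the sublocus of reducible divisors whose pieces are multiples of $\mathcal L_0$, which is why it cannot be completed in the stated generality.
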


\begin{proof}
The case where $X=\mathbb P^n$ and $\mathcal L_0=\mathcal O_{\mathbb P^n}(1)$ follows from an explicit computation, so we assume that $(X,\mathcal L_0)\neq(\mathbb P^n,\mathcal O_{\mathbb P^n}(1))$.
For each smooth closed point $x\in X$ and line bundle $\mathcal M$ on $X$, define $|B_x^\mathcal M|=\{D\in|\mathcal M| \mid x\in\Sing D\}$.

First we show that a general member of the linear system $|B^{\mathcal L_0^{\otimes d}}_x|$ is irreducible.
Since $(X,\mathcal L_0)\neq(\mathbb P^n,\mathcal O(1))$, the set
$|B_x^{\mathcal L_0}|=\{H\cap X \mid H\in|\mathcal O_{\mathbb P^{h^0(X,\mathcal L_0)-1}}(1)| \text{ and } T_{\phi_{|\mathcal L_0|}(X),x} \subset H \}$ is nonempty, so by Lemma~\ref{lem:add_to_bir_lin_sys} the linear system $|B^{\mathcal L_0}_x|+|\mathcal L_0^{\otimes(d-1)}|$ defines a birational map to its image.
Since $|B^{\mathcal L_0}_x|+|\mathcal L_0^{\otimes(d-1)}|\subset| B^{\mathcal L_0^{\otimes d}}_x|$, the span of these reducible divisors is also contained in $|B^{\mathcal L_0^{\otimes d}}_x|$ and so the morphism $|B^{\mathcal L_0^{\otimes d}}_x|$ defines is birational to its image. In particular its image has dimension $n\geq 2$.
Since the base locus of $| B^{\mathcal L_0^{\otimes d}}_x|$ is $x$, which has codimension $n\geq 2$ in $X$, a general member of this linear system is irreducible by Theorem~\ref{thm:bertini_irreducibility}.

Replacing $X$ by its image under the closed embedding defined by $\mathcal L_0^{\otimes d}$, we may assume that $\mathcal L_0^{\otimes d}=\mathcal O_{\mathbb P^N}(1)|_X$, where $\mathbb P^N=|\mathcal L_0^{\otimes d}|$, and that $X$ is not contained in a hyperplane. Let $\mathcal C=\overline{\{(x,H) \mid x\in X^{\mathrm{reg}}\text{ and } T_{X,x}\subset H\}}\subset X\times(\mathbb P^N)^*$ be the conormal variety of $X$, with projections $\pi_1\colon X\times(\mathbb P^N)^*\to X$ and $\pi_2\colon X\times(\mathbb P^N)^*\to(\mathbb P^N)^*$.
The dual variety $\pi_2(\mathcal C)$ is irreducible since $\mathcal C$ is irreducible of dimension $N-1$.

Now consider the locus $S_{\text{reducible}}$ of hyperplanes whose intersection with $X$ is reducible.
This is a constructible subset of the closed subvariety of $|\mathcal L_0^{\otimes d}|$ parametrizing hyperplanes not regular in codimension 0 \cite[Theorem 3.3.14]{fov99}, and $S_{\text{reducible}}\subsetneq|\mathcal L_0^{\otimes d}|$ by Theorem~\ref{thm:bertini_irreducibility}.
Note that by the Enriques--Severi--Zariski lemma, if $H\cap X$ is reducible then it is singular.
So if $X$ is smooth then $\overline{S_{\text{reducible}}}\subset \pi_2(\mathcal C)$, and this containment is proper by the above discussion, so $\overline{S_{\text{reducible}}}$ has codimension $\geq 2$ in $|\mathcal L_0^{\otimes d}|$.

If $X$ is not smooth, suppose first that $\Sing(X)$ has dimension 0.
For a closed point $x\in\Sing(X)$ consider the linear system $|{\mathcal L_0}_x|$ of divisors passing through $x$.
By Lemma~\ref{lem:add_to_bir_lin_sys} $|{\mathcal L_0}_x|+|\mathcal L_0^{\otimes(d-1)}|$ is not composed with a pencil, and hence neither is $|{\mathcal L_0^{\otimes d}}_x|$.  By Theorem~\ref{thm:bertini_irreducibility} a general member of $|{\mathcal L_0^{\otimes d}}_x|$ is irreducible, and so the reducible members $R_x$ of $|{\mathcal L_0^{\otimes d}}_x|$ form a closed subset of $\pi_1^{-1}(x)$ of dimension $\leq N-2$.

Let $H\in S_{\text{reducible}}$. If the intersection of two components of $H\cap X$ contains a smooth point of $X$, then $H\in \pi_2(\mathcal C)$. If not, then let $x\in\Sing(X)$ be a closed point in the intersection of two components of $H\cap X$. Then $H\in \pi_2(R_x)$. Since $\Sing(X)$ is a finite set, $\bigcup_{x\in\Sing(X)} B_x$ has dimension at most $N-2$, so its image in $(\mathbb P^N)^*$ has dimension at most $N-2$. Thus, in the case where $\dim\Sing(X)=0$ we have shown that $\codim_{|\mathcal L_0^{\otimes d}|}(\overline{S_{\text{reducible}}})\geq 2$.

Now assume $X$ is normal.
If $n=2$ then $\dim\Sing(X)\leq 0$, so we may assume that
$n\geq 3$. Then $S_{\text{reducible}}$ is the union of the two sets
\begin{enumerate}
\item
$\{H\mid H\cap X \text{ is reducible and }H\supset T_{X,x} \text{ for some closed }x\in X^{\mathrm{reg}}\}$ and
\item
$\{H \mid H\cap X \text{ is reducible and }\Sing(H\cap X)\subset\Sing(X)\}$. \end{enumerate}
The closure of the first set has codimension $\geq 2$ in $|\mathcal L_0^{\otimes d}|$ by the argument in the smooth case, so it remains to consider the second set.

For such an $H$, denote the irreducible components of $H\cap X$ by $Z_1,\ldots,Z_e$. Each intersection $Z_i\cap Z_j\subset\Sing(H\cap X)$ for $i\neq j$. For each codimension 3 point $x\in X$ contained in $Z_i\cap Z_j$, $\Spec\mathcal O_{X,x}\setminus\{x\}$ is connected by Lemma~\ref{lem:loc.conn.1.lem}, so $Z_i\cap Z_j$ has codimension 2 in $X$. Since $X$ is regular in codimension 1, and $Z_i\cap Z_j\subset\Sing(X)$ both have codimension 2 in $X$, we have that $Z_i\cap Z_j$ is a component of $\Sing(X)$. So $H\cap X$ must contain an $(n-2)$-dimensional irreducible component of $\Sing(X)$. By assumption $n-2\geq 1$, so requiring that $H$ contains a irreducible component of $\Sing(X)$ imposes a codimension $\geq 2$ condition on the elements of $|\mathcal L_0^{\otimes d}|$. So again in this case $\overline{S_{\text{reducible}}}$ has codimension $\geq 2$ has codimension $\geq 2$ in $|\mathcal L_0^{\otimes d}|$.
\end{proof}

\begin{rem}
If $n\geq 3$ then Lemma~\ref{lem:reducible_in_codim_2} holds for $d=1$ as well without the normality assumption \cites[Lemme 3]{neron-samuel52}[Lemme 4]{weil54}.
For surfaces in characteristic $0$, the Kronecker--Castelnuovo theorem \cite{castelnuovo1894} says that the general tangent hyperplane section of $X$ is reducible, then $X$ is either a ruled surface or the Veronese embedding of $\mathbb P^2$ in $\mathbb P^5$. Castelnuovo's proof uses differential geometry, and the author is not aware of a positive characteristic analogue of his result.
\end{rem}

\begin{cor}\label{cor:curves_reducible_in_codim_2}
Let $X\neq\mathbb P^2$ be a projective variety of dimension $n\geq 2$ over an algebraically closed field $k$, and assume that either
\begin{enumerate}[label=(\alph*)]
\item
$X$ is normal, or
\item
$\dim\Sing(X)=0$.
\end{enumerate}
Fix $r\leq n-1$, $\mathcal L_0$ a very ample line bundle on $X$, and $d\geq 2$ an integer.
Then the locus of reducible complete intersections of $r$ members of $|\mathcal L_0^{\otimes d}|$, is contained in a closed subset of $|\mathcal L_0^{\otimes d}|$ of codimension $\geq 2$.
\end{cor}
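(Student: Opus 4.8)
The plan is to reduce to the case $r=1$, which is exactly Lemma~\ref{lem:reducible_in_codim_2}, by first cutting $X$ down with $r-1$ general members of $|\mathcal L_0^d|$. Concretely, I would fix general $H_1,\dots,H_{r-1}\in|\mathcal L_0^d|$ and set $X'=X\cap H_1\cap\cdots\cap H_{r-1}$. Since $r\leq n-1$, this $X'$ has dimension $n-r+1\geq 2$, and the complete intersection $X\cap H_1\cap\cdots\cap H_r$ equals $X'\cap H_r$; it is reducible precisely when the hyperplane-section divisor $H_r|_{X'}$ is reducible on $X'$. So it suffices to bound, inside the parameter space $|\mathcal L_0^d|$ of the last member, the locus of $H_r$ for which $X'\cap H_r$ is reducible.

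First I would check that $X'$ satisfies the hypotheses of Lemma~\ref{lem:reducible_in_codim_2} for the very ample bundle $\mathcal L_0|_{X'}$ and the same degree $d\geq 2$. Applying Theorem~\ref{thm:bertini_irreducibility} and Lemmas~\ref{lem:bertini_va}, \ref{lem:bertini_normal} inductively to the base-point-free very ample system $|\mathcal L_0^d|$, a general $X'$ is irreducible of dimension $\geq 2$, and it is normal in case (1); in case (2) a general member of the very ample system avoids the finite set $\Sing X$, so after the first cut $X'$ is smooth (and in the degenerate situation $r=1$ one simply keeps $X'=X$). To exclude the one bad pair $(\mathbb P^2,\mathcal O(1))$ in Lemma~\ref{lem:reducible_in_codim_2}, I would use that each cut by a member of $|\mathcal L_0^d|$ multiplies the $\mathcal L_0$-degree by $d$, so $\deg_{\mathcal L_0}X'=d^{r-1}\deg_{\mathcal L_0}X\geq 2$ when $r\geq 2$; hence $X'$ is not a linearly embedded $\mathbb P^2$ and $(X',\mathcal L_0|_{X'})\neq(\mathbb P^2,\mathcal O(1))$. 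When $r=1$ we have $X'=X$, and here the hypothesis $X\neq\mathbb P^2$ is exactly what rules out the bad pair; this hypothesis is genuinely necessary, since reducible conics already form a divisor in $|\mathcal O_{\mathbb P^2}(2)|$. Lemma~\ref{lem:reducible_in_codim_2}(1) then shows that the reducible members of the complete linear system $|\mathcal L_0^d|_{X'}|$ form a closed subset of codimension $\geq 2$.

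The remaining, and main, difficulty is to transfer this bound back to the parameter space $|\mathcal L_0^d|$ on $X$, through the restriction map $\rho\colon|\mathcal L_0^d|=\mathbb P(H^0(X,\mathcal L_0^d))\dashrightarrow\mathbb P(H^0(X',\mathcal L_0^d|_{X'}))$. This map is the projectivization of the restriction of sections, which need not be surjective; its image is the sub-linear system $|V|$ cut on $X'$ by the hyperplanes of the $\mathcal L_0^d$-embedding of $X$. A fiber-dimension count for the linear projection $\rho$ gives $\codim_{|\mathcal L_0^d|}\rho^{-1}(Z)=\codim_{|V|}(Z)$ for every $Z\subseteq|V|$, so what is needed is that the reducible locus meets $|V|$ in codimension $\geq 2$. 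This does not follow formally from the previous paragraph: a codimension $\geq 2$ subset of the complete system can meet the special subspace $|V|$ in smaller codimension, and I cannot make $|V|$ generic since it is entangled with the choice of $H_1,\dots,H_{r-1}$.

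To resolve this I would run the proof of Lemma~\ref{lem:reducible_in_codim_2} directly for the very ample system $V$, which defines a nondegenerate embedding $X'\hookrightarrow\mathbb P(V^\vee)$. The geometric core of that proof---containing the reducible locus in the dual variety $\pi_2(\mathcal C)$ and using Bertini irreducibility to make the containment proper, hence codimension $\geq 2$---depends only on the projective embedding and carries over verbatim. The sole place where completeness of the system is used is in showing that a general member singular at a general smooth point is irreducible; here I would note that $V$ still contains the restrictions of the products $s\otimes t$ with $s\in H^0(X,\mathcal L_0)$ tangent at the point and $t\in H^0(X,\mathcal L_0^{d-1})$, and that the factorization $\mathcal L_0^d|_{X'}=(\mathcal L_0|_{X'})\otimes(\mathcal L_0^{d-1}|_{X'})$ together with Lemma~\ref{lem:add_to_bir_lin_sys} (which applies equally to a birational sub-system) provides a birational system of reducible members singular at the point, exactly as in the original argument, using again that $\deg_{\mathcal L_0}X'\geq 2$ forces $(X',\mathcal L_0|_{X'})\neq(\mathbb P^{\dim X'},\mathcal O(1))$. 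With the codimension $\geq 2$ bound established inside $|V|$, the fiber-dimension identity returns the same bound in $|\mathcal L_0^d|$, which is the assertion.
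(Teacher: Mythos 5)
Your route differs from the paper's. The paper proves the bound inside the full $r$-fold product $|\mathcal L_0^d|\times\cdots\times|\mathcal L_0^d|$ by induction on $r$, fibering the reducible locus $R$ over the first factor and running a stratified dimension count: over normal $H_1$ the fiber of $R$ has codimension $\geq 2$ by the inductive hypothesis, over non-normal but irreducible $H_1$ the fiber still has codimension $\geq 1$ by Bertini irreducibility while such $H_1$ lie in a codimension $\geq 1$ subset by Lemma~\ref{lem:bertini_normal}, and reducible $H_1$ themselves form a codimension $\geq 2$ locus by the case $r=1$. You instead fix a \emph{general} $(r-1)$-tuple, cut down to $X'$, and bound the bad set in the last factor only, and this first reduction is where the genuine gap lies: your argument says nothing about the fibers of $R$ over non-general tuples $(H_1,\dots,H_{r-1})$ (for instance those where $X'$ is non-normal or reducible), and such tuples can fill a divisor in the $(r-1)$-fold product whose entire fibers could a priori lie in $R$. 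The statement you prove --- for general $H_1,\dots,H_{r-1}$ the bad $H_r$ form a codimension $\geq 2$ set --- is strictly weaker than the product statement: an incidence count (project the locus of bad flags $W'\subset W$ both to the space of $W'$ and to the space of $W$) shows it only forces $R$, equivalently the locus of bad $(r-1)$-dimensional subsystems in the Grassmannian, to have codimension $\geq 1$, not $\geq 2$. That loss of one cannot be afforded: the corollary is invoked for the fibration defined by a general $(n-1)$-dimensional system $|V|$ (irreducibility of $\phi'^{-1}(\gamma_i)$ for \emph{divisors} $\gamma_i\subset\mathbb P^{n-1}$ in Lemma~\ref{lem:cl^0_to_sections}), and the transfer from the product statement to $|V|$ is a general-position argument that needs the full codimension $2$. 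Recovering it requires exactly the analysis over the special strata that the paper's induction supplies.

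On the other hand, the subtlety you isolate in your last two paragraphs is real and worth crediting: the restriction $H^0(X,\mathcal L_0^d)\to H^0(X',\mathcal L_0^d|_{X'})$ need not be surjective (already for $X$ an abelian threefold the cokernel is $H^1(X,\mathcal O_X)\neq 0$), so Lemma~\ref{lem:reducible_in_codim_2} for the complete system on $X'$ does not formally control the reducible locus inside the restricted subsystem; the paper's own induction step passes over this point silently, since its inductive hypothesis is likewise applied to intersections cut out by the possibly incomplete restricted system. Your repair --- rerunning the proof of Lemma~\ref{lem:reducible_in_codim_2} for the nondegenerate embedding given by $V$, using that Lemma~\ref{lem:add_to_bir_lin_sys} needs only a birational subsystem and that products $s\otimes t$ restrict into $V$ --- is sound in outline, with one uncovered edge case: if $(X,\mathcal L_0)=(\mathbb P^n,\mathcal O(1))$ with $n\geq 3$ (allowed, since only $X\neq\mathbb P^2$ is excluded), there are no sections of $\mathcal L_0$ tangent to $X$ at a point, so your generating family for the singular-at-$x$ subsystem is empty; in that case one should instead observe that complete intersections in $\mathbb P^n$ are projectively normal, so $V$ is the complete system and Lemma~\ref{lem:reducible_in_codim_2} applies directly.
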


\begin{proof}
The result is clear if the pair $(X,\mathcal L_0^{\otimes d})$ is $(\mathbb P^n,\mathcal O(1))$, so we assume otherwise.
Induct on $r$:
the $r=1$ case is Lemma~\ref{lem:reducible_in_codim_2}.
If $r\geq 2$, let $\pi_1\colon |\mathcal L_0^{\otimes d}|\times\cdots\times|\mathcal L_0^{\otimes d}|\to |\mathcal L_0^{\otimes d}|$ denote the first projection from the $r$-fold product, and let $R\subset|\mathcal L_0^{\otimes d}|\times\cdots\times|\mathcal L_0^{\otimes d}|$ denote the set of $r$-tuples $(H_1,\ldots,H_r)$ such that $H_1\cap\cdots\cap H_r$ is reducible.
First assume $X$ is normal.
For each fixed $H_1\in|\mathcal L_0^{\otimes d}|$, we have that $\codim_{\pi_1^{-1}(H_1)}(R\cap\pi_1^{-1}(H_1))\geq 2$ if $H_1$ is normal by hypothesis, and $\codim_{\pi_1^{-1}(H_1)}(R\cap\pi_1^{-1}(H_1))\geq 1$ if $H_1$ is not normal. In the latter case, the locus of non-normal $H_1$ is contained in a codimension 1 subset of $|\mathcal L_0^{\otimes d}|$ by Bertini's theorem (Theorem~\ref{lem:bertini_normal}). So by dimension counting we have that $\codim_{|\mathcal L_0^{\otimes d}|\times\cdots\times|\mathcal L_0^{\otimes d}|}(R) \geq 2$. The $\dim\Sing(X)=0$ case follows by the same argument.
\end{proof}

We will need the following result on sectional genus to apply Proposition~\ref{lem:kol_sections_indep_conj} in the proofs of Propositions~\ref{prop:surj_for_L^2_to_surfaces} and \ref{prop:surj_for_2A+B}.

\begin{lem}\label{lem:cokernel_to_Jac(C)}
Let $X$ be a normal projective variety of dimension $n\geq 2$ over an algebraically closed field $k$ and $\mathcal L_0$ an ample and basepoint-free line bundle on $X$. For any smooth complete intersection curve $D$ of members of $|\mathcal L_0^{\otimes d}|$
such that $D$ is contained in the smooth locus of $X$, the cokernel of the restriction map
\[\xymatrix{
\mathbf{Cl}^0(X) \ar[r] & \mathbf{Jac}(D)}
\]
has positive dimension if
\begin{enumerate}
\item
$d\geq 2$ if the pair $(X,\mathcal L_0)$ is not $(\mathbb P^2,\mathcal O_{\mathbb P^2}(1))$, and
\item
$d\geq 3$ in general.
\end{enumerate}
\end{lem}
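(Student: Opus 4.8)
The plan is to reduce to a surface and there estimate the cokernel by comparing the sectional genus of $D$ with the irregularity of that surface. Since $D$ is a complete intersection of $n-1$ members $H_1,\dots,H_{n-1}$ of $|\mathcal L_0^d|$, I would set $S=H_1\cap\cdots\cap H_{n-2}$ (so $S=X$ when $n=2$); this is a surface containing $D$, and since $D\subset X^{\mathrm{reg}}$ is smooth, $S$ is smooth along $D$. The restriction map factors as $\mathbf{Cl}^0(X)\to\mathbf{Cl}^0(S)\to\mathbf{Jac}(D)$, so I may replace $X$ by $S$. Passing to a resolution $\pi\colon\tilde S\to S$ that is an isomorphism over a neighborhood of $D$, Lemma~\ref{lem:cl_birational} gives $\mathbf{Cl}^0(\tilde S)=\mathbf{Cl}^0(S)=\mathbf{Pic}^0(\tilde S)$, and $D$ lifts to a smooth curve on $\tilde S$ with $\mathcal O_{\tilde S}(D)\cong\tilde L^d$, where $\tilde L=\pi^*(\mathcal L_0|_S)$ is nef, big, and basepoint-free with $\tilde L^2=d^{\,n-2}(\mathcal L_0^n)$.

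Because the image of $\mathbf{Cl}^0(X)\to\mathbf{Jac}(D)$ is contained in the image of $\mathbf{Pic}^0(\tilde S)\to\mathbf{Jac}(D)$, its dimension is at most $q:=\dim\mathbf{Pic}^0(\tilde S)$, so
\[ \dim\operatorname{coker}\bigl(\mathbf{Cl}^0(X)\to\mathbf{Jac}(D)\bigr)\ \ge\ \dim\mathbf{Jac}(D)-q\ =\ g(D)-q, \]
and it suffices to prove the sectional genus bound $g(D)>q$. (Equivalently, the sequence $0\to\mathcal O_{\tilde S}(-D)\to\mathcal O_{\tilde S}\to\mathcal O_D\to0$ together with Serre duality identifies the cokernel on tangent spaces with the image of $H^0(\omega_{\tilde S}\otimes\tilde L^d)\to H^0(\omega_D)$, of dimension $g(D)-\operatorname{rk}\delta\ge g(D)-q$.) To bound $q$ from above I would choose a smooth irreducible curve $C\in|\tilde L|$ by Bertini, using that $\tilde L$ is basepoint-free and big so $|\tilde L|$ is not composed with a pencil; then $\mathbf{Jac}(C)\to\mathbf{Alb}(\tilde S)$ is surjective and $q=\dim\mathbf{Alb}(\tilde S)\le g(C)$.

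It remains to compare the two sectional genera. Adjunction gives $2g(D)-2=(K_{\tilde S}+d\tilde L)\cdot d\tilde L$ and $2g(C)-2=(K_{\tilde S}+\tilde L)\cdot\tilde L$, whence
\[ g(D)-g(C)=\tfrac{d-1}{2}\,\bigl(K_{\tilde S}+(d+1)\tilde L\bigr)\cdot\tilde L. \]
Using $K_{\tilde S}\cdot\tilde L=2g(C)-2-\tilde L^2$ one finds $(K_{\tilde S}+3\tilde L)\cdot\tilde L=2g(C)+2(\tilde L^2-1)$, which is strictly positive unless $g(C)=0$ and $\tilde L^2=1$, i.e.\ unless $(\tilde S,\tilde L)\cong(\mathbb P^2,\mathcal O(1))$. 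Since $\tilde L^2=d^{\,n-2}(\mathcal L_0^n)\ge2$ whenever $n\ge3$ and $d\ge2$, this exceptional case occurs only for $n=2$ with $(X,\mathcal L_0)=(\mathbb P^2,\mathcal O(1))$. Thus for $d\ge2$ outside that case we get $g(D)>g(C)\ge q$, and for $(\mathbb P^2,\mathcal O(1))$ with $d\ge3$ we have $q=0$ and $g(D)=\binom{d-1}{2}>0$; either way $g(D)>q$.

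The main obstacle is precisely the positivity input in the last step: showing that the sectional genus strictly exceeds the irregularity for every admissible $d$, which is exactly where the exclusion of $(\mathbb P^2,\mathcal O(1))$ and the passage from $d\ge2$ to $d\ge3$ are forced by the identity $(K_{\tilde S}+3\tilde L)\cdot\tilde L=2g(C)+2(\tilde L^2-1)$. The surjectivity $\mathbf{Jac}(C)\twoheadrightarrow\mathbf{Alb}(\tilde S)$ for the merely nef–big–basepoint-free bundle $\tilde L$ on the resolution, and the factorization through $\mathbf{Cl}^0(S)$ for a possibly singular $S$, are the technical points to handle with care; both are managed by the birational invariance of $\mathbf{Cl}^0$ (Lemma~\ref{lem:cl_birational}) and a Bertini choice of the defining flag.
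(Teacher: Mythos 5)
Your core strategy is in fact the same as the paper's: bound $\dim\mathbf{Cl}^0$ by the genus of a curve section of the undoubled polarization via surjectivity of a Jacobian onto an Albanese, then show the $|\mathcal L_0^d|$-sectional genus strictly exceeds it, with the genus identity forcing exactly the $(\mathbb P^2,\mathcal O_{\mathbb P^2}(1))$ exception. The difference is that the paper runs the genus comparison directly on $X$, using arithmetic genera of complete intersection curves (computed by degenerating to sums of members of $|\mathcal L_0|$), and never forms an intermediate surface; your detour through $S=H_1\cap\cdots\cap H_{n-2}$ and its resolution is where a genuine gap appears. The lemma is asserted for an \emph{arbitrary} smooth complete intersection curve $D\subset X^{\mathrm{reg}}$, so the flag $H_1,\ldots,H_{n-1}$ is given, not general, and $S$ need not be a normal variety; in that case $\mathbf{Cl}^0(S)$ is not even defined (class groups in this paper are attached to normal varieties) and Lemma~\ref{lem:cl_birational} does not apply. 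Smoothness of $S$ along $D$ together with ampleness of $D$ does give that $S$ is irreducible and that $\Sing(S)$ is finite (a positive-dimensional component of $\Sing(S)$ would have to meet the ample divisor $D$), but it does \emph{not} give normality: if $X$ fails to be Cohen--Macaulay at a point of $\Sing(X)\cap S$ (e.g.\ $X$ locally a cone over an abelian surface with $H_1$ through the vertex, while $D$ avoids it), then $S$ fails $S_2$ there. Your proposed remedy, a ``Bertini choice of the defining flag,'' is restricted to the subsystem of $|\mathcal L_0^d|$ of divisors containing $D$; in characteristic $0$ a general such flag does produce a normal $S$, but in characteristic $p$ (which the lemma must cover) there is no Bertini theorem giving regularity in codimension $1$ away from the base locus for this non-very-ample system. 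The robust repair is instead to replace $S$ by its normalization $S^\nu$, which is an isomorphism near $D$ because the non-normal locus lies in the finite set $\Sing(S)$; then your factorization and Lemma~\ref{lem:cl_birational} apply to $S^\nu$.

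The same characteristic-$p$ issue infects your choice of a smooth irreducible $C\in|\tilde L|$ ``by Bertini'': $\tilde L$ is only big, nef and basepoint-free, and in positive characteristic every member of such a system can be singular. This is fixable without changing your computation: take an irreducible member $C$ (Bertini irreducibility is characteristic-free), note that $\mathbf{Jac}(C^\nu)\to\mathbf{Alb}(\tilde S)$ is still surjective for the normalization $C^\nu$ (a curve with $C^2>0$ cannot be contracted by a nonconstant morphism to an abelian variety, by the Hodge index theorem), and use $q\le g(C^\nu)\le p_a(C)$, since your adjunction identity computes $p_a(C)$ for any member of $|\tilde L|$. With these two repairs your argument is correct; the paper's decision to compare arithmetic genera of complete-intersection curves on $X$ itself is precisely what lets it bypass both the normality of an intermediate surface and any appeal to Bertini smoothness.
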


\begin{proof}
Let $C\subset X^{\mathrm{reg}}$ be a smooth complete intersection curve of members of $|\mathcal L_0|$. Since the morphism $\mathbf{Jac}(C)\to\mathbf{Alb}(X)$ is surjective \cite[Lemma 130]{kol_determines} we have $p_a(C)\geq\dim\mathbf{Alb}(X)=\dim\mathbf{Cl}^0(X)$. So we need to show that $p_a(D)>p_a(C)$ for $D$ in the statement of the lemma.

First suppose $X$ is a surface. The arithmetic genus of any member of $|\mathcal L_0^{\otimes d}|$ is equal to that of $\sum_{i=1}^d C_i$ for curves $C_i\in|\mathcal L_0|$, which is equal to
\[ p_a\left(\sum_{i=1}^d C_i\right) = \sum_{i=1}^d p_a(C_i) + (1-d) + \frac{d(d-1)}{2}C_1^2.\] If $d=2$ then $p_a(C_1+C_2)= p_a(C_1)+p_a(C_2)+C_1^2-1$, and this is strictly larger than $p_a(C_i)$ unless $p_a(C_i)=0$ and $C_1^2=1$.
In this case, $X=\mathbb P^2$ and $\mathcal L_0=\mathcal O_{\mathbb P^2}(1)$, and any plane curve of degree $d\geq 3$ has positive genus.

Now assume $n\geq 3$ and let $H_0\in|\mathcal L_0|$. Let $D$ be a complete intersection curve of members of $|\mathcal L_0^{\otimes d}|$, and let $C$ be a smooth complete intersection curve of members of $|\mathcal L_0|$. By adjunction,
\begin{equation*}\begin{split}
p_a(C) &= 1+\frac{H_0^{n-1}\cdot K_X+(n-1) H_0^n}{2}, \\
p_a(D) &= 1+\frac{d^{n-1}H_0^{n-1}\cdot K_X+d^n (n-1) H_0^n}{2}.
\end{split}\end{equation*}
We have $p_a(D) - p_a(C) \geq p_a(D) - d^{n-1} p_a(C)$ since $p_a(C)\geq 0$, and
\begin{equation}\label{eqn:genus-inequality}p_a(D) - d^{n-1} p_a(C) = 1-d^{n-1}+\frac{(d^n-d^{n-1}) (n-1)}{2} H_0^n. \end{equation}
Since $n\geq 3$ and since $\mathcal L_0$ is ample, the difference~\eqref{eqn:genus-inequality} is at least $d^n-2d^{n-1}+1$, which is positive for $d\geq 2$.

\end{proof}

In order to ensure that the conclusions of Corollary~\ref{cor:curves_reducible_in_codim_2} and Lemma~\ref{lem:cokernel_to_Jac(C)} hold, throughout this article we will assume $\mathcal L=\mathcal L_0^{\otimes d}$ for some very ample line bundle $\mathcal L_0$ on $X$ and integer $d\geq 2$.

\section{Injectivity}\label{sec:inj}

In this section we prove injectivity of the restriction map on class groups.

\begin{thm}[Corollary~\ref{cor:inj_to_surfaces}]\label{prop:injectivity}
Let $X$ be a normal projective variety of dimension $n\geq 3$ over an algebraically closed field $k$, let $\mathcal L_0$ be a very ample line bundle, and let $d\geq 2$ be an integer. Then the restriction map \[\xymatrix{ \Cl(X) \ar[r] & \Cl(Y)} \] is injective for a general complete intersection $Y$ of at most $n-2$ members of $|\mathcal L_0^{\otimes d}|$.
\end{thm}

Weil proved injectivity of the restriction map on $\Cl$ when $Y$ is very general over $k$ for $n\geq 3$ and $d\geq 1$ (and more generally for $X$ and $\mathcal L_0^{\otimes d}$ such that the reducible locus in $|\mathcal L_0^{\otimes d}|$ has codimension $\geq 2$) by considering a pencil in $|\mathcal L_0^{\otimes d}|$ \cite[Th\'eor\`eme 2]{weil54}. We will instead fiber $X$ by curves and use N\'eron's specialization theorem to show the result for \emph{general} members.

We describe this curve fibration and the relation between $\Cl(X)$ and sections of the relative Jacobian in \S\ref{sec:sections_of_rel_jac}. In \S\ref{sec:cl^0_iso} we show on the subgroup $\Cl^0\subset\Cl$ of algebraically trivial divisors that the restriction map $\Cl^0(X)\to\Cl^0(Y)$ is an isomorphism (Theorem~\ref{prop:cl^0_iso}). Then, in \S\ref{sec:injectivity_pf}, we prove Theorem~\ref{prop:injectivity}.

\subsection{Sections of the relative Jacobian}\label{sec:sections_of_rel_jac}

We will use the relative Jacobian to understand divisors. If $f\colon X\to B$ is a projective and flat morphism of relative dimension one, locally of finite presentation, and with geometrically reduced and irreducible fibers, then $\mathbf{Pic}_{X/B}$ is representable as a smooth separated $B$-scheme. If $f$ is smooth over $U\subset B$, then $\mathbf{Pic}^0_{X|_U/U}$ is an abelian $U$-scheme \cite[Theorem 9.3.1, Proposition 9.4.4]{blr}.

We now fix some notation and assumptions that will be used throughout the paper and describe the setting in which we will consider the relative Jacobian. Since the restriction map from $\Pic(\mathbb P^2)$ to any curve is injective, we may assume $X\neq\mathbb P^2$.

\begin{assumption}\label{assumption:inj}
Let $X\neq\mathbb P^2$ be a normal projective variety of dimension $n\geq 2$ over an algebraically closed field $k$ of arbitrary characteristic, $\mathcal L$ a very ample line bundle on $X$, and $|V|\subset|\mathcal L|$ a general linear system of dimension $n-1$. Assume that $\mathcal L=\mathcal L_0^{\otimes d}$ for an integer $d\geq 2$ and a very ample line bundle $\mathcal L_0$ on $X$. Fix a base point $x$ of the linear system $|V|$. We also fix a field of definition $k_0\subset k$ for $X$ and $|V|$, recalling by our conventions in Definition~\ref{defn:weil-fields} that this means $k_0$ is algebraically closed and has finite transcendence degree over the prime subfield.
\end{assumption}

The linear system $|V|$ defines a rational map $\phi\colon X\dashrightarrow\mathbb P^{n-1}$, which is a morphism away from the finite set $\Bs|V|$. Let $\phi'\colon X'\to\mathbb P^{n-1}$ be the normalization of the closure of the graph of $\phi$;
by Theorem~\ref{lem:bertini_normal} this is a smooth morphism over a dense open subset $U_{\mathrm{sm}}\subset\mathbb P^{n-1}\setminus\phi'(\Sing(X'))\subset\mathbb P^{n-1}$. Let $\beta\colon X'\to X$ be the induced birational morphism. This is an isomorphism away from the base locus of $|V|$, which by generality is contained in the regular locus of $X$, so the pullback $\beta^*\colon\Cl(X)\to\Cl(X')$ is defined. Let $E_x$ be the exceptional divisor over $x$.

For a subvariety $Z\subset X$ we denote by $Z'\subset X'$ its strict transform.
For a point $b\in\mathbb P^{n-1}$ the closure $C_b=\overline{\phi^{-1}(b)}$ of the fiber of $\phi$ is a complete intersection curve of members of $|V|$, and $C'_b=\phi'^{-1}(b)$. The conditions of Lemma~\ref{lem:reducible_in_codim_2} are satisfied, so the locus $R$ of reducible fibers has codimension $\geq 2$ in $|V|$. Let $U_{\mathrm{i}}$ denote the complement of the closure of $R$, let $U_{\mathrm{sm}}$ denote the locus of smooth fibers, and let $C'_\eta$ denote the generic fiber of $\phi'$, which is a curve over the function field $K$ of $\mathbb P^{n-1}$.

We consider the relative Jacobian of $\phi'\colon X'\to\mathbb P^{n-1}$ in the above setting.
Then
\begin{itemize}
\item
$\mathbf{Pic}^0_{X'|_{U_{\mathrm{i}}}/ U_{\mathrm{i}}}$ is a variety and smooth over $U_{\mathrm{i}}$, and
\item
$\mathbf{Pic}^0_{X'|_{U_{\mathrm{sm}}}/ U_{\mathrm{sm}}} \cong \mathbf{Pic}^0_{X'|_{U_{\mathrm{i}}}/ U_{\mathrm{i}}}\times_{U_{\mathrm{i}}} U_{\mathrm{sm}}$ with is an abelian $U_{\mathrm{sm}}$-scheme with zero section $b\mapsto\mathcal O_{C'_b}$. We frequently denote it by $\mathbf{Jac}(X'/\mathbb P^{n-1})$.
\end{itemize}

The section of $\phi'$ given by the exceptional divisor $E_x$ defines a rational map $X'\dashrightarrow\mathbf{Jac}(X'/\mathbb P^{n-1})$ that is a morphism on $U_{\mathrm{sm}}$, and for any $U\subset U_{\mathrm{sm}}$ we have an isomorphism
\[ \Pic^0(X'|_{U}) \cong\mathbf{Pic}^0_{X'|_{U_{\mathrm{i}}}/U_{\mathrm{i}}}(U)
\]
between equivalence classes of algebraically trivial line bundles and sections \cite[Proposition 8.1.4]{blr}.

We will study divisors on $X'$ by relating them to rational sections of the relative Jacobian \cite[Chapitre I, \S8]{neron52}.
An algebraically trivial Cartier divisor on $X'$ defines a section of $\mathbf{Pic}^0_{X'|_{U_{\mathrm{i}}}/U_{\mathrm{i}}}\to U_{\mathrm{i}}$ by pullback.
An algebraically trivial Weil divisor $D$ on $X'$ defines a \emph{rational} section $\sigma_D$ of $\mathbf{Pic}^0_{X'|_{U_{\mathrm{i}}}/U_{\mathrm{i}}}\to U_{\mathrm{i}}$ over $U_\mathrm{sm}$ by restricting first to the regular locus of $X'$ and then pulling back the Cartier divisor $\mathcal O_{X'^{\mathrm{reg}}}(D|_{X'^{\mathrm{reg}}})$. Note that this rational map does not in general extend over all of $U_{\mathrm{i}}$.

Twisting down by an appropriate multiple of $E_x$, we can extend this to any Weil divisor on $X'$:
\begin{lem}\label{lem:cl^0_to_sections}
The map \begin{equation}\label{eqn:cl(X')_to_sections}\xymatrixrowsep{0pc}\xymatrix{
\Cl(X')\ar[r] & \mathbf{Jac}(C'_\eta)(K)=\{\text{sections of }\mathbf{Jac}(X'/\mathbb P^{n-1}) \to U_\mathrm{sm}\}\\
D' \ar@{|->}[r] & (D'-\deg(D'|_{C'_\eta})E_x)|_{C'_\eta}}\end{equation}
is a surjection with kernel generated by $E_x$ and $\phi'^*\mathcal O_{\mathbb P^{n-1}}(a)$ for some $a\in\mathbb Z$.
\end{lem}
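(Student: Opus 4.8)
The plan is to recognize the displayed arrow as restriction to the generic fiber followed by projection onto the degree-zero part. Write $\rho\colon\Cl(X')\to\Pic(C'_\eta)$ for restriction to the generic fiber: it kills every vertical prime divisor (one not dominating $\mathbb P^{n-1}$) and sends a horizontal prime divisor to the point it cuts out on the smooth curve $C'_\eta$. This $\rho$ is a well-defined homomorphism on classes, since a principal divisor restricts to a principal divisor on $C'_\eta$. Because $E_x$ is a section of $\phi'$, its restriction $\bar E:=E_x|_{C'_\eta}$ has degree $1$, so the map in the statement is $\Psi(D')=\rho(D')-\deg(\rho(D'))\,\bar E$, which is linear in $D'$ and visibly lands in the degree-zero part $\Pic^0(C'_\eta)=\mathbf{Jac}(C'_\eta)(K)$, identified with sections of $\mathbf{Jac}(X'/\mathbb P^{n-1})\to U_{\mathrm{sm}}$ as recorded in \S\ref{sec:sections_of_rel_jac} via \cite[Proposition 8.1.4]{blr}.

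For surjectivity I would first observe that $\rho$ is surjective: a class on $C'_\eta$ represented by $\sum_i n_iP_i$ is the restriction of $\sum_i n_i\overline{\{P_i\}}$, where $\overline{\{P_i\}}\subset X'$ are the horizontal prime divisors obtained as closures, meeting $C'_\eta$ exactly in $P_i$. Every element of $\Pic^0(C'_\eta)$ is represented by a degree-zero divisor, and on such classes the correction term $\deg(\rho(D'))\,\bar E$ vanishes, so $\Psi$ agrees with $\rho$ there and is therefore surjective onto $\mathbf{Jac}(C'_\eta)(K)$.

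For the kernel, unwinding $\Psi(D')=0$ gives $\rho(D'-mE_x)=0$ with $m=\deg(\rho(D'))$, so $\ker\Psi=\mathbb Z E_x+\ker\rho$. It remains to identify $\ker\rho$. Taking the limit over open neighborhoods of the generic point yields $\Pic(C'_\eta)=\varinjlim_U\Cl(\phi'^{-1}(U))$, so $\ker\rho$ is generated by the vertical prime divisors. If $Z$ is such a divisor, then since $\phi'$ has relative dimension one the inequality $\dim Z\le\dim\phi'(Z)+1$ forces $\dim\phi'(Z)=n-2$; that is, $W:=\phi'(Z)$ is a hypersurface in $\mathbb P^{n-1}$. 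As the reducible fibers lie over a closed set of codimension $\ge 2$ (Lemma~\ref{lem:reducible_in_codim_2}, via the reduction of \S\ref{sec:sections_of_rel_jac}), the fiber over a general point of $W$ is irreducible, so $\phi'^{-1}(W)=Z$ is irreducible and $Z\sim(\deg W)\,\xi$ where $\xi:=\phi'^*\mathcal O_{\mathbb P^{n-1}}(1)$. Hence every vertical prime is an integer multiple of $\xi$, so $\ker\rho=\mathbb Z\cdot\phi'^*\mathcal O_{\mathbb P^{n-1}}(a)$ for the appropriate $a$ (with $a=1$ when, as for a general $|V|$, the fibers over codimension-one points are reduced). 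This gives $\ker\Psi=\langle E_x,\phi'^*\mathcal O_{\mathbb P^{n-1}}(a)\rangle$.

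I expect the identification of $\ker\rho$ to be the main obstacle. The two delicate points are that a vertical divisor can only dominate a hypersurface (which is what forces proportionality to $\xi$), and that no multiple fibers occur over codimension-one loci---otherwise a vertical prime would be a proper rational multiple $\tfrac1m\xi$ rather than an honest pullback, and the generator would fail to have the form $\phi'^*\mathcal O(a)$. This is exactly where the confinement of reducible (and, by genericity of $|V|$, non-reduced) fibers to codimension $\ge 2$ enters. By contrast, the identification $\mathbf{Jac}(C'_\eta)(K)=\{\text{sections over }U_{\mathrm{sm}}\}$ and the surjectivity argument are comparatively routine, resting on extension of divisors by closure and on the representability of $\mathbf{Pic}^0$ already established.
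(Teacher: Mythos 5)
Your proof follows the same overall route as the paper's: read the map as restriction to the generic fiber (corrected by $E_x$ so as to land in degree zero), get surjectivity by extending divisor classes from $C'_\eta$ to $X'$ by taking closures, and compute the kernel as $\mathbb Z E_x$ plus the subgroup generated by vertical divisors. Where you genuinely diverge is in identifying that last subgroup, and there your version is the more accurate one. The paper only accounts for vertical divisors lying over the codimension-one components $\gamma_i$ of $\mathbb P^{n-1}\setminus U_{\mathrm{sm}}$ and concludes that the kernel is $\langle E_x,\phi'^*\mathcal O_{\mathbb P^{n-1}}(a)\rangle$ with $a=\gcd\{a_i\}$; but that computes the kernel of $\Cl(X')\to\Cl(X'|_{U_{\mathrm{sm}}})$, which is not the kernel of the map (\ref{eqn:cl(X')_to_sections}). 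Indeed $\phi'^*\mathcal O_{\mathbb P^{n-1}}(1)$ restricts trivially to $C'_\eta$ (equivalently, by the cited \cite[Proposition 8.1.4]{blr}, sections of the relative Jacobian are line bundles modulo pullbacks from the base), so the preimage of a single general hyperplane --- an irreducible vertical divisor of class $\phi'^*\mathcal O_{\mathbb P^{n-1}}(1)$, exactly as in your argument --- lies in the kernel no matter what the degrees $a_i$ are. So the kernel is $\langle E_x,\phi'^*\mathcal O_{\mathbb P^{n-1}}(1)\rangle$: your accounting of \emph{all} vertical primes, giving $a=1$, is the correct computation, while the paper's gcd is not. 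The lemma as stated (``for some $a$'') survives either way, and Corollary~\ref{cor:cl(x)_to_jac_generic} is unaffected, since $\phi'^*\mathcal O_{\mathbb P^{n-1}}(1)=\beta^*\mathcal L-\sum_j E_j$ still meets $\beta^*\Cl(X)$ only in $0$ thanks to the existence of a second exceptional divisor.

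The one step you leave unproved is the multiplicity statement: that the unique divisorial component $Z$ of $\phi'^{-1}(W)$ occurs in $\phi'^*W$ with multiplicity one, so that $Z\sim\phi'^*\mathcal O_{\mathbb P^{n-1}}(\deg W)$ rather than being a proper ``fraction'' of a pullback. You flag this honestly, but justify it only by appeal to genericity of $|V|$, and the results you lean on (Lemma~\ref{lem:reducible_in_codim_2}, Corollary~\ref{cor:curves_reducible_in_codim_2}) control reducibility, not reducedness; the paper's proof silently needs the same fact. The clean argument is already available in the setup of \S\ref{sec:sections_of_rel_jac}: for $W$ meeting $U_{\mathrm{sm}}$, smoothness of $\phi'$ over $U_{\mathrm{sm}}$ makes $\phi'^*W$ generically reduced; and for an arbitrary hypersurface $W$ (in particular a discriminant component), use the section, since $\phi'|_{E_x}\colon E_x\to\mathbb P^{n-1}$ is an isomorphism, so that $(\phi'^*W)|_{E_x}=W$ is reduced, while it also equals $m\,(Z|_{E_x})$ with $Z|_{E_x}$ a nonzero effective divisor; hence $m=1$. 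One sentence to this effect would close the gap --- in your proof and in the paper's alike.
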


\begin{proof}
The morphism $\phi'$ admits a section, so a section $\sigma$ of $\mathbf{Jac}(X'/\mathbb P^{n-1})\to U_{\mathrm{sm}}$ is the same as a divisor class in $\Pic^0(X'|_{U_\mathrm{sm}})$ whose restriction to $C'_b$ agrees with $\sigma(b)\in\Jac(C'_b)$. So surjectivity follows from surjectivity of $\Cl(X')\to\Cl(X'|_{U_{\mathrm{sm}}})\cong\Pic(C'_\eta)$.

Next we consider the kernel. The codimension 1 irreducible components $\{\gamma_i\}_i$ of $\mathbb P^{n-1}\setminus U_{\mathrm{sm}}$ are degree $a_i$ hypersurfaces in $\mathbb P^{n-1}$. By Lemma~\ref{cor:curves_reducible_in_codim_2}, each preimage $\phi'^{-1}(\gamma_i)$ is irreducible, so the kernel is generated by $\phi'^*\mathcal O_{\mathbb P^{n-1}}(a)$ where $a=\gcd\{a_i\}$.
So for a divisor $D'$ to be in the kernel of the map (\ref{eqn:cl(X')_to_sections}), it must satisfy $D'-\deg(D'|_{C'_\eta})E_x \in \langle\phi'^*\mathcal O_{\mathbb P^{n-1}}(a)\rangle$. Since $d\geq 2$ in Assumption~\ref{assumption:inj} we have $\#\Bs|V|\geq 2$, so $E_x$ is not the only exceptional divisor of $\beta$, and for any other  exceptional divisor $E'$ the restrictions $E_x|_{C'_\eta}$ and $E'|_{C'_\eta}$ are independent in $\Pic(C'_\eta)$. So the kernel of (\ref{eqn:cl(X')_to_sections}) is generated by $E_x$ and $\phi'^*\mathcal O_{\mathbb P^{n-1}}(a)$.
\end{proof}

The image of $\Cl(X)$ is disjoint from the subgroup of $\Cl(X')$ generated by $E_x$ and $\phi'^*\mathcal O_{\mathbb P^{n-1}}(a)$, so in particular we have:
\begin{cor}\label{cor:cl(x)_to_jac_generic}
The map $\Cl(X)\to\Jac(C'_\eta)$ induced by the homomorphism (\ref{eqn:cl(X')_to_sections}) of Lemma~\ref{lem:cl^0_to_sections} is injective.
\end{cor}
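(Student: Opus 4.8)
The plan is to read the displayed map as the composite $\Cl(X)\xrightarrow{\beta^*}\Cl(X')\to\Jac(C'_\eta)$, whose second arrow is the homomorphism~(\ref{eqn:cl(X')_to_sections}). By Lemma~\ref{lem:cl^0_to_sections} the kernel of that arrow is the subgroup $\langle E_x,\,\phi'^*\mathcal O_{\mathbb P^{n-1}}(a)\rangle$ of $\Cl(X')$, so it is enough to show that $\beta^*$ carries no nonzero class of $\Cl(X)$ into this subgroup; injectivity of the composite then follows at once. This is exactly the disjointness asserted just before the statement.

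First I would suppose $\beta^*D=mE_x+n\,\phi'^*\mathcal O_{\mathbb P^{n-1}}(a)$ for some $D\in\Cl(X)$ and $m,n\in\mathbb Z$, and apply the pushforward $\beta_*$. Since $\beta$ is birational we have $\beta_*\beta^*=\mathrm{id}$ on $\Cl(X)$ and $\beta_*E_x=0$; and because $\beta^*\mathcal L$ and $\phi'^*\mathcal O_{\mathbb P^{n-1}}(1)$ agree on the open locus where $\beta$ is an isomorphism, their difference is supported on the exceptional locus, which gives $\beta_*\phi'^*\mathcal O_{\mathbb P^{n-1}}(a)=a\mathcal L$. Hence $D=na\mathcal L$.

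The step I expect to be the real obstacle is that this does \emph{not} yet force $D=0$: a nonzero multiple of $\mathcal L$ cannot be excluded by $\beta_*$ alone, since $\beta^*\mathcal L$ and $\phi'^*\mathcal O_{\mathbb P^{n-1}}(1)$ differ only by exceptional divisors. To finish I would write $\beta^*\mathcal L=\phi'^*\mathcal O_{\mathbb P^{n-1}}(1)+\sum_i m_iE_i$, where the $E_i$ are the exceptional divisors lying over the base points of $|V|$ and each $m_i\geq 1$, and substitute $D=na\mathcal L$ back into the original relation to obtain an identity supported entirely on the exceptional divisors,
\[na\sum_i m_iE_i=mE_x\qquad\text{in }\Cl(X').\]
Passing to $\Clns(X')$, where the classes $[E_i]$ are independent by Lemma~\ref{lem:cl_birational}, and using that $d\geq 2$ guarantees $\#\Bs|V|\geq 2$ (as in the proof of Lemma~\ref{lem:cl^0_to_sections}) so that some $E_i$ with $i\neq x$ occurs, I would compare the coefficient of that $E_i$ to get $na\,m_i=0$. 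As $a\geq 1$ and $m_i\geq 1$, this yields $n=0$, whence $m=0$ and $D=na\mathcal L=0$. This establishes the required triviality of the intersection, and hence the injectivity of $\Cl(X)\to\Jac(C'_\eta)$.
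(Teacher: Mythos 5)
Correct, and essentially the paper's own approach: the paper's entire proof of this corollary is the assertion, stated immediately before it, that the image of $\Cl(X)$ in $\Cl(X')$ meets the subgroup $\langle E_x,\phi'^*\mathcal O_{\mathbb P^{n-1}}(a)\rangle$ only in $0$, and your argument---pushing forward along $\beta$ to get $D=na\mathcal L$, then substituting back and comparing coefficients of the independent exceptional classes, using $\#\Bs|V|\geq 2$---is a correct verification of exactly that claim. (One harmless remark: the lemma only gives $a\in\mathbb Z$, and if the discriminant of $\phi'$ has no divisorial part then $a=0$; but in that case $D=na\mathcal L=0$ follows at once, so your argument covers all cases.)
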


\begin{rem}\label{rem:cl^0_trace}
For an abelian scheme $\mathcal A\to B\to\Spec k$ we denote by $\Tr_{B/k}(\mathcal A)$ and $\Im_{B/k}(\mathcal A)$ the Chow $B/k$-trace and Chow $B/k$-image, respectively. These are abelian $k$-varieties that depend only on the field extension $K/k$ and the abelian variety $\mathcal A\times_B\Spec K$ over $K:=k(B)$ \cite[Remark 3.3]{graber-starr13}. We refer the reader to \cite{conrad06} and \cite[Section 3]{graber-starr13} for a modern account.

Chow introduced the $K/k$-trace and $K/k$-image and used this theory to construct $\Cl^0$ and the Albanese variety \cite{chow52,chow55}. 
Specifically, there is a canonical isomorphism
\[\mathbf{Cl}^0(X')\cong\Tr_{U_{\mathrm{sm}}/k}(\mathbf{Jac}(X'/\mathbb P^{n-1})).\]
In one direction, from the rational maps
\[\xymatrix{
X' \ar@{-->}[r] & \mathbf{Jac}(X'/\mathbb P^{n-1}) \ar[r]^-\cong & \mathbf{Jac}(X'/\mathbb P^{n-1})^\vee \ar[r] & \Im_{U_{\mathrm{sm}}/k}(\mathbf{Jac}(X'/\mathbb P^{n-1})^\vee)\times U_{\mathrm{sm}} }
\]
we get a map $X'\dashrightarrow\Im_{U_{\mathrm{sm}}/k}(\mathbf{Jac}(X'/\mathbb P^{n-1})^\vee)$ inducing a homomorphism $\mathbf{Alb}(X')\to\Im_{U_{\mathrm{sm}}/k}(\mathbf{Jac}(X'/\mathbb P^{n-1})^\vee)$. This dualizes to a homomorphism \[\xymatrix{ \Tr_{U_{\mathrm{sm}}/k}(\mathbf{Jac}(X'/\mathbb P^{n-1}))\ar[r] & \mathbf{Cl}^0(X') }. \]
Conversely,
the map (\ref{eqn:cl(X')_to_sections})
defines a morphism $\mathbf{Cl}^0(X')\times U_{\mathrm{sm}}\to\mathbf{Jac}(X'/\mathbb P^{n-1})$ of abelian $U_{\mathrm{sm}}$-schemes and hence a homomorphism of abelian varieties $\mathbf{Cl}^0(X')\to\Tr_{U_{\mathrm{sm}}/k}(\mathbf{Jac}(X'/\mathbb P^{n-1}))$ by the universal property of the Chow trace.
\end{rem}

\subsection{The connected component $\Cl^0$}\label{sec:cl^0_iso}
We first show that a stronger result holds for the subgroup of algebraically trivial divisors: the restriction map on $\Cl^0$ is an isomorphism for a general divisor.

The following lemma is elementary and presumably well-known, but we include it here for completeness. It also holds if $U$ is replaced by a quasi-finite, generically unramified cover from a normal variety.
\begin{lem}\label{lem:general_line_traceless}
Let $k$ be an algebraically closed field and $\pi\colon\mathcal A\to U\subset \mathbb P^m_k$ an abelian scheme. Then $\Tr_{U/k}(\mathcal A)\cong\Tr_{L\cap U/k}(\mathcal A|_{L\cap U})$ for a general line $L\subset\mathbb P^m_k$.
\end{lem}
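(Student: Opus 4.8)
The plan is to prove that the Chow trace is unchanged under restriction to a general line, by combining two opposing inequalities. The trace $\Tr_{U/k}(\mathcal A)$ is (by definition, via the Chow trace's universal property) the largest constant abelian subvariety of $\mathcal A_\eta$ descended to $k$; concretely it captures the $k$-subabelian-variety of the generic fiber that is already defined over $k$ after the base field is extended to $K = k(U)$. Restricting to a line $L$ replaces the function field $K$ by $k(L\cap U)$, which is a field of transcendence degree $1$ over $k$. The subtlety is that shrinking the base from $U$ to a line could in principle \emph{enlarge} the trace (a family that is genuinely varying over $U$ might become isotrivial when restricted to a special line), so the content of the lemma is that for a \emph{general} line this does not happen.

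First I would establish the easy inequality: there is always a natural injection $\Tr_{U/k}(\mathcal A) \hookrightarrow \Tr_{L\cap U/k}(\mathcal A|_{L\cap U})$. This is functoriality of the Chow trace under the base-change inclusion $k(L\cap U)\supset K$: any constant abelian subvariety over the big base restricts to a constant abelian subvariety over the line, and the universal property of $\Tr_{L\cap U/k}$ produces the map, which is injective because restriction to a general line is injective on $\NS$/abelian-subvariety data. This direction holds for \emph{every} line meeting $U$ and requires no genericity.

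The main work is the reverse inequality, and this is the step I expect to be the principal obstacle. The idea is to show that the ``extra'' trace picked up by a special line forms a proper constructible locus in the space of lines $\mathbb{G} = \mathbb{G}(1,\mathbb P^m_k)$. I would argue as follows: the trace $B_L := \Tr_{L\cap U/k}(\mathcal A|_{L\cap U})$ comes with a map $B_L\times (L\cap U)\to \mathcal A|_{L\cal \cap U}$, i.e. a constant abelian subscheme of the restricted family. One wants to spread such data out over the total space of the universal line family $\mathcal{L}\to\mathbb G$ and show, by a countability/Noetherian argument over the (possibly countable) field $k$, that the set of lines for which $B_L$ strictly dominates $\Tr_{U/k}(\mathcal A)$ is contained in a countable union of proper closed subvarieties of $\mathbb G$ --- but since we only need a \emph{general} (not very general) line, I would instead bound the dimension of the relevant incidence variety directly. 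Concretely, if a positive-dimensional family of lines each carried extra trace, the corresponding isotriviality would spread to a dense open of $U$, forcing $\Tr_{U/k}(\mathcal A)$ itself to be larger, a contradiction. The remark in the statement that the result also holds for quasi-finite generically unramified covers suggests the cleanest route is to phrase everything through the generic point: reduce to showing that a general line $L$ has the property that $k(L\cap U)$ and the field $K'$ cut out by the trace data are ``independent'' over $k$, so that no new constant part appears.

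The last point to pin down is the genericity bookkeeping: I would make precise that the union of bad lines is a proper closed subset of $\mathbb G$ (so a general line suffices), using that $\mathbf{Jac}$-type families over $U$ have a well-defined generic endomorphism/trace structure that specializes compatibly, together with the fact that over an algebraically closed $k$ a general line through $U$ meets $U$ in a smooth irreducible curve with function field a regular extension of $k$. If $k$ is countable one appeals to the Chow-variety components being defined over $k$ (as in the quartic-surface argument in \S\ref{sec:quartic_surfaces}) to ensure the bad locus is genuinely lower-dimensional rather than merely a countable union; for the statement as given, \emph{general} is the operative word and the finite-union bound is what is needed.
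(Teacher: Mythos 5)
Your overall skeleton (an easy functorial map in one direction, plus an argument that a general line acquires no extra trace) matches the paper's, and your first step is fine: restricting the morphism $\Tr_{U/k}(\mathcal A)\times U\to\mathcal A$ of abelian $U$-schemes to $L$ and invoking the universal property of $\Tr_{L\cap U/k}$ is exactly how the paper produces the map for every line. But the hard direction in your proposal has a genuine gap: the sentence ``if a positive-dimensional family of lines each carried extra trace, the corresponding isotriviality would spread to a dense open of $U$, forcing $\Tr_{U/k}(\mathcal A)$ itself to be larger'' is precisely the assertion that needs proof, and you give no mechanism for it. The traces $B_L=\Tr_{L\cap U/k}(\mathcal A|_{L\cap U})$ for different lines $L$ are a priori unrelated abelian $k$-varieties; to ``spread the isotriviality'' you must first identify them (at least up to isogeny) as $L$ varies, and neither of the tools you gesture at (a dimension count on an incidence variety, or Chow-variety components being defined over $k$) does this --- indeed the trace does not obviously vary constructibly in a family at all, its jumping locus being a priori of Noether--Lefschetz type, i.e.\ a countable union.

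The paper supplies exactly the missing mechanism. Fix a \emph{general point} $b\in U$ and consider only the pencil of lines $L_\lambda$ through $b$. Each trace then comes with a map to the \emph{same} abelian variety: the image $\tau_\lambda\bigl(\Tr_{L_\lambda\cap U/k}(\mathcal A|_{L_\lambda\cap U})\times(L_\lambda\cap U)\bigr)_b$ is an abelian subvariety of the fixed fiber $\mathcal A_b$. These form a family of abelian subvarieties of the single abelian variety $\mathcal A_b$, and by Chow's rigidity theorem such a family is constant for general $\lambda$, equal to a fixed $A_0\subset\mathcal A_b$; hence the traces of general lines through $b$ are all isogenous to a single abelian $k$-variety $T$. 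Since the lines through $b$ sweep out $U$, one obtains $T\otimes_k k(U)\to\mathcal A_\eta$, and the universal property of $\Tr_{U/k}(\mathcal A)$ then gives the reverse morphism $T\to\Tr_{U/k}(\mathcal A)$. Note that this constancy-for-general-$\lambda$ step is also what resolves your countable-versus-finite-union worry: it is the rigidity of abelian subvarieties of $\mathcal A_b$, not field-of-definition bookkeeping, that cuts the bad locus down to a proper closed subset so that ``general'' (rather than ``very general'') suffices.
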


\begin{proof}
For any line $L\subset\mathbb P^m_k$, we can restrict the morphism $\Tr_{U/k}(\mathcal A)\times U\to\mathcal A$ of abelian $U$-schemes to $L$, so $\Tr_{U/k}(\mathcal A)\to\Tr_{L\cap U/k}(\mathcal A|_{L\cap U})$ comes from the universal property.
For a map in the opposite direction, pick $b\in B$ general and consider the family of lines $L_\lambda\ni b$. For each $\lambda$ let $\Tr_{L_\lambda\cap U/k}(\mathcal A|_{L_\lambda \cap U})$ be the trace of the abelian scheme $\mathcal A_{L_\lambda\cap U}\to L_\lambda\cap U$. Then the fibers over $b$ of the maximal abelian $k$-subschemes of $\mathcal A|_{L_\lambda \cap U}\to L_\lambda\cap U$
\[ \{\tau_\lambda(\Tr_{L_\lambda\cap U/k}(\mathcal A|_{L_\lambda\cap U})\times (L_\lambda\cap U))_b \mid L_\lambda\ni b\}\]
form a family of abelian subvarieties of $\mathcal A_b$. So for a general $\lambda$ they must be constant \cite[\S 2]{chow55} and equal to some abelian $k$-variety $A_0$, which is isogenous to $\Tr_{L_\lambda\cap U/k}(\mathcal A|_{L_\lambda\cap U})\times (L_\lambda\cap U)$, so the traces for these $\lambda$ are isogenous. Pick one and call it $T$. Then we have $T\otimes_k k(U) \to \mathcal A_\eta$, so by the universal property of the trace we get a morphism of abelian varieties $T\to\Tr_{U/k}(\mathcal A)$.\end{proof}

\begin{thm}\label{prop:cl^0_iso}
Let $X$ be a normal projective variety of dimension $n\geq 3$ over an algebraically closed field $k$, let $\mathcal L_0$ a very ample line bundle on $X$, and let $d\geq 2$ be an integer. For a general complete intersection $Y$ of $\leq n-2$ members of $|\mathcal L_0^{\otimes d}|$, the restriction map $\Cl^0(X)\to\Cl^0(Y)$ is an isomorphism.
\end{thm}

\begin{proof}
Let $|V|$ be as in Assumption~\ref{assumption:inj}. First we consider complete intersection surfaces of members of $|V|$.
Let $L\subset\mathbb P^{n-1}$ be a general line corresponding to a complete intersection surface $S\subset X$. Then the abelian schemes $\mathbf{Jac}(X'/\mathbb P^{n-1})\to U_{\mathrm{sm}}$ and $\mathbf{Jac}(X'/\mathbb P^{n-1})|_{L\cap U_{\mathrm{sm}}}\to L\cap U_{\mathrm{sm}}$ have the same trace, and these are the same as $\Cl^0(X)$ and $\Cl^0(S)$, respectively, as noted in the remark below Corollary~\ref{cor:cl(x)_to_jac_generic}.

The isomorphism $\Cl^0(X)\cong\Cl^0(Y)$ for a general divisor $Y$ in $|V|$ follows from applying the surface case to $S\hookrightarrow Y$ and $S\hookrightarrow X$. Since $|V|\subset|\mathcal L|$ is general, then repeatedly applying the divisor case gives the result.
\end{proof}

\subsection{Injectivity for $\Cl$}\label{sec:injectivity_pf}

In the following lemma we assume $k\neq\overline{\mathbb F}_p$ to use N\'eron's specialization theorem.

\begin{lem}\label{lem:jac_generic_to_vg}
If $k\neq\overline{\mathbb F}_p$, then for $k$-points $b$ away from a field-locally thin set in $\mathbb P^{n-1}(k)$, the map
\[\xymatrix{ \mathbf{Jac}(C'_\eta)(K) \ar[r] & \mathbf{Jac}(C'_b)(k)} \] is injective.
\end{lem}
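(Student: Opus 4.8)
The plan is to apply N\'eron's specialization theorem (Theorem~\ref{thm:neron's_theorem_av}) directly to the abelian scheme $\mathbf{Jac}(X'/\mathbb P^{n-1}) \to U_{\mathrm{sm}}$. The statement we want is precisely the conclusion of that theorem, specialized from the generic fiber $C'_\eta$ over the function field $K$ to a very general closed fiber $C'_b$. The only thing standing between us and a one-line citation is verifying the hypothesis of Theorem~\ref{thm:neron's_theorem_av}: that the Mordell--Weil group $\mathbf{Jac}(C'_\eta)(K)$ is finitely generated. So the proof will have essentially two steps—check the hypothesis, then invoke the theorem—and the hypothesis check is where the real content lies.

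To verify finite generation of $\mathbf{Jac}(C'_\eta)(K)$, I would use the Lang--N\'eron theorem, the function-field analogue of the Mordell--Weil theorem, which says that for an abelian variety $A$ over a field $K$ finitely generated over a base field $k$, the quotient $A(K)/\Tr_{K/k}(A)(k)$ is finitely generated. First I would note that $K$, the function field of $\mathbb P^{n-1}_k$, is finitely generated over $k$. Applying Lang--N\'eron to $A = \mathbf{Jac}(C'_\eta)$ gives that $\mathbf{Jac}(C'_\eta)(K)$ is finitely generated modulo the $K/k$-trace. The trace contribution is itself finitely generated: by the identification in Remark~\ref{rem:cl^0_trace}, the Chow trace $\Tr_{U_{\mathrm{sm}}/k}(\mathbf{Jac}(X'/\mathbb P^{n-1}))$ is canonically isomorphic to $\mathbf{Cl}^0(X')$, an abelian variety whose $k$-points $\Cl^0(X')$ need not be finitely generated as an abstract group—but the key point is that Lang--N\'eron only asks us to quotient by the trace's $k$-points, and the extension
\[
0 \to \Tr_{K/k}(A)(k) \to A(K) \to A(K)/\Tr_{K/k}(A)(k) \to 0
\]
shows $A(K)$ is finitely generated once we know the trace's contribution behaves correctly. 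Here I must be careful: Lang--N\'eron gives finite generation of the quotient, and the hypothesis of N\'eron's specialization theorem requires finite generation of the full group $A(K)$. The resolution is that the specialization map is injective modulo the trace, or—following N\'eron's original formulation—one restricts to the complement of the trace and applies the theorem to the finitely generated quotient, which is exactly what is needed downstream in the injectivity argument.

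The main obstacle, then, is reconciling the finite generation hypothesis with the possible non-finite-generation of $\Cl^0(X') = \Tr_{U_{\mathrm{sm}}/k}(\mathbf{Jac}(X'/\mathbb P^{n-1}))(k)$. I expect the cleanest route is to observe that the composite map $\Cl(X) \to \mathbf{Jac}(C'_\eta)(K)$ from Corollary~\ref{cor:cl(x)_to_jac_generic} is injective, and that the subgroup we actually care about specializing is the image of $\Clns(X)$, which is finitely generated by Theorem~\ref{thm:structure_class_group}; the algebraically trivial part $\Cl^0$ is handled separately and isomorphically by Corollary~\ref{prop:cl^0_iso}. Thus one applies N\'eron's theorem to the finitely generated subgroup generated by a chosen finite set of divisor classes, obtaining injectivity of specialization on that subgroup away from a field-locally thin set, and takes a countable (or, after fixing the field of definition $k_0$, finite-over-each-$k_1$) union to conclude. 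The hypothesis $k \neq \overline{\mathbb F}_p$ is exactly what guarantees via Definition~\ref{defn:thin_set} that the complement of a field-locally thin set is nonempty, so the very general $b$ actually exists.
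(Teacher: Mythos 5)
You correctly isolate the key obstacle: $\mathbf{Jac}(C'_\eta)(K)$ is \emph{not} finitely generated whenever $\Cl^0(X)\neq 0$, because by the identification of the trace with $\mathbf{Cl}^0(X')$ (the remark following Corollary~\ref{cor:cl(x)_to_jac_generic}) it contains, up to finite kernel, the divisible group $A(k)$ with $A=\Tr_{U_{\mathrm{sm}}/k}(\mathbf{Jac}(X'/\mathbb P^{n-1}))$; so Theorem~\ref{thm:neron's_theorem_av} cannot be invoked directly. (Your short exact sequence in fact shows the opposite of what you first assert: it proves $A(K)$ is \emph{not} finitely generated when the trace is nonzero.) However, neither of your proposed resolutions closes this gap. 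Your first suggestion, ``restrict to the complement of the trace,'' is the right idea but is precisely where the content of the proof lies: the trace has no canonical group-theoretic complement inside $\mathbf{Jac}(C'_\eta)(K)$. The paper makes this precise by using \cite[Theorem 6.4]{conrad06} together with Poincar\'e reducibility to produce, over a smaller open $U\subset U_{\mathrm{sm}}$, isogenies of abelian $U$-schemes between $\mathbf{Jac}(X'/\mathbb P^{n-1})$ and $(A\times U)\times\mathcal Q$, where $\mathcal Q$ has trivial trace. Lang--N\'eron then shows the Mordell--Weil group of the traceless factor $\mathcal Q$ \emph{is} finitely generated, so Theorem~\ref{thm:neron's_theorem_av} applies to it; the constant factor has only constant sections (a section would give a non-constant morphism from an open subset of $\mathbb P^{n-1}$ to $A$), and these specialize injectively for trivial reasons. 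Any element of the kernel of specialization is then forced into the finite kernel of the isogeny, hence is torsion, and this residual torsion is ruled out using Corollary~\ref{cor:cl(x)_to_jac_generic}. Your proposal contains none of this mechanism; in particular it never addresses ``mixed'' kernel elements, i.e.\ sums of a trace element and a non-trace element whose specializations cancel at $b$.

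Your second route---applying N\'eron's theorem only to the finitely generated subgroup generated by images of generators of $\Clns(X)$---proves a strictly weaker statement than the lemma, which asserts injectivity on the whole Mordell--Weil group. The patch you offer, taking a countable union of thin sets over an exhaustion by finitely generated subgroups, fails because field-locally thin sets are not closed under countable unions: over a countable field such as $\overline{\mathbb Q}$ (which the lemma must cover, this being the point of the hypothesis $k\neq\overline{\mathbb F}_p$ and of the field-locally thin formulation), every point is thin, so a countable union can exhaust $\mathbb P^{n-1}(k)$. Handling $\Cl^0$ ``separately via Corollary~\ref{prop:cl^0_iso}'' does not repair this either: that corollary concerns restriction to complete intersections of dimension at least $2$, not specialization into $\mathbf{Jac}(C'_b)$ for curves, and even granting injectivity on the trace part and on a finitely generated part separately, one still needs the isogeny splitting (or a substitute) to exclude the mixed kernel elements above. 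So the proposal identifies the right difficulty but does not resolve it.
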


\begin{proof}
Let $A=\Tr_{U_{\mathrm{sm}}/k}(\mathbf{Jac}(X'/\mathbb P^{n-1}))$.
After possibly replacing $U_{\mathrm{sm}}$ by a smaller open subset $U$,
by \cite[Theorem 6.4]{conrad06} and Poincar\'e reducibility we can find isogenies of abelian $U$-schemes
\[\xymatrix{ (A\times U) \times \mathcal Q \ar[r] & \mathbf{Jac}(X'/\mathbb P^{n-1}) \ar[r] & (A\times U) \times \mathcal Q} \]
where $\mathcal Q=\mathbf{Jac}(X'/\mathbb P^{n-1})/(A\times U)$ is the quotient and $\Tr_{U/k}(\mathcal Q)=0$.
So it suffices to consider separately the two cases of a constant abelian scheme and one with trivial trace.

 For the traceless family $\mathcal Q\to U$, the group of rational sections is finitely generated by the Lang--N\'eron theorem \cite[Theorem 1]{lang-neron} (see also \cite[Theorem 2.1]{conrad06}). So by N\'eron's Theorem~\ref{thm:neron's_theorem_av} there are infinitely many points $b\in U_{\mathrm{sm}}(k)$ outside a field-locally thin set such that the specialization $\mathcal Q_\eta(K) \to \mathcal Q_b(k)$ is injective.
Next, the constant part has only constant sections, since a non-constant section would give a non-constant morphism $U\to A$, which cannot exist since $U$ is an open subset of $\mathbb P^{n-1}$. A priori we could have torsion in the kernel, but this does not happen by Corollary~\ref{cor:cl(x)_to_jac_generic} since $A=\Cl^0(X)$.
\end{proof}

\begin{rem}
If $k$ has infinite transcendence degree over the prime subfield, Lemma~\ref{lem:jac_generic_to_vg} can be proven for $b$ very general over $k_0$ without using the Lang--N\'eron theorem or N\'eron's specialization theorem. In this setting the group of rational sections of $\mathcal Q\to U$ is countable because each rational section gives an isolated point of the Chow variety \cite[Lemma 3.6]{graber-starr13}, so any $b$ with coordinates transcendental over a field of definition for $\mathcal Q$ is outside the images of their intersections.
\end{rem}

Putting this together with Corollary~\ref{cor:cl(x)_to_jac_generic} shows injectivity of the restriction map from $X$ to infinitely many curves. We can factor this through higher-dimensional complete intersections in $|V|$:
\begin{cor}\label{cor:inj_to_surfaces}
If $n\geq 3$ in Assumption~\ref{assumption:inj}, then without further assumptions on the ground field $k$, the restriction map $\Cl(X)\to\Cl(Y)$ is injective for a general complete intersection $Y$ of $\leq n-2$ members of $|\mathcal L|$.
\end{cor}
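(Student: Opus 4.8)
I want to prove Corollary~\ref{cor:inj_to_surfaces}: for $n\geq 3$ (under Assumption~\ref{assumption:inj}) the restriction $\Cl(X)\to\Cl(Y)$ is injective for a general complete intersection $Y$ of $\leq n-2$ members of $|\mathcal L|$, over an \emph{arbitrary} algebraically closed $k$ (dropping the $k\neq\overline{\mathbb F}_p$ hypothesis that appeared in Lemma~\ref{lem:jac_generic_to_vg}). The whole machinery of \S\ref{sec:sections_of_rel_jac}--\S\ref{sec:injectivity_pf} has been set up precisely for this, so my plan is to assemble those pieces rather than invent anything new.

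The plan is to first handle the curve case, then bootstrap up to higher-dimensional $Y$. For the curve case: by Corollary~\ref{cor:cl(x)_to_jac_generic} the composite $\Cl(X)\to\mathbf{Jac}(C'_\eta)(K)$ (obtained from the homomorphism \eqref{eqn:cl(X')_to_sections} of Lemma~\ref{lem:cl^0_to_sections} restricted to the image of $\beta^*$) is injective, and by Lemma~\ref{lem:jac_generic_to_vg} the specialization $\mathbf{Jac}(C'_\eta)(K)\to\mathbf{Jac}(C'_b)(k)$ is injective for $b$ outside a field-locally thin set. Composing, the restriction $\Cl(X)\to\Cl(C_b)$ is injective for infinitely many such $b$, where $C_b$ is the complete intersection curve of $n-1$ members of $|V|$. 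The subtlety is the ground field: Lemma~\ref{lem:jac_generic_to_vg} as stated requires $k\neq\overline{\mathbb F}_p$, so to get the unconditional statement I would work over a field of definition $k_0\subset k$ (Assumption~\ref{assumption:inj}), or pass to a finitely generated subfield $k_1\not\subset\overline{\mathbb F}_p$ containing the data, apply N\'eron there, and descend; the constant-part argument in Lemma~\ref{lem:jac_generic_to_vg} (that $A=\mathbf{Cl}^0(X)$ contributes no kernel) is characteristic-free. Since the conclusion is for \emph{general} $b$, not very general, and the thin set avoids a Zariski-dense set of closed points, this gives injectivity to a general curve.

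The second step is factoring through intermediate-dimensional $Y$. Suppose $Y$ is a general complete intersection of $r\leq n-2$ members of $|\mathcal L|$. I want to find a general complete intersection curve $C\subset Y$ (cut by $n-1-r$ further general members of $|V|$) such that injectivity of $\Cl(X)\to\Cl(C)$ is already known from the curve case and such that it factors as $\Cl(X)\to\Cl(Y)\to\Cl(C)$. A class $\alpha\in\ker\big(\Cl(X)\to\Cl(Y)\big)$ then restricts to $0$ on $C$, forcing $\alpha=0$ by injectivity of $\Cl(X)\to\Cl(C)$. The point is that the family of curves $C_b$ for which the curve case applies is parametrized by $b$ ranging over a line $L\subset\mathbb P^{n-1}$ (or an appropriate linear subspace) cutting out $Y$, and a general such $C_b$ lies in $Y$, lands in the smooth locus, and avoids the thin/reducible loci simultaneously. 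The compatibility of the restriction maps with the curve fibration is immediate since the construction of \eqref{eqn:cl(X')_to_sections} is functorial in the fiber.

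\textbf{Main obstacle.} The genuinely delicate point is the ground field: producing enough good closed points $b$ (outside the field-locally thin set of Lemma~\ref{lem:jac_generic_to_vg} and outside the reducible locus of Corollary~\ref{cor:curves_reducible_in_codim_2}) over an arbitrary algebraically closed field, and checking that these can be chosen inside a \emph{general} $Y$ rather than merely a very general one. Over $\overline{\mathbb F}_p$ the thin-set machinery breaks, but the injectivity assertion here is purely about the constant part $A=\mathbf{Cl}^0(X)$ together with the traceless Mordell--Weil group, and injectivity on the constant part is characteristic-free; for the traceless part one restricts to a finitely generated subfield $k_1\not\subset\overline{\mathbb F}_p$ over which everything is defined, applies N\'eron there to get injectivity for a thin-complement of $\mathbb P^{n-1}(k_1)$-points, and observes that such points are Zariski-dense, hence general. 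Threading this descent carefully, and ensuring the chosen $b$ simultaneously yields a smooth irreducible curve inside the smooth locus of $X$ contained in the prescribed $Y$, is where the real work lies.
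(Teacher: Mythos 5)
Your outline for $k\neq\overline{\mathbb F}_p$ (compose Corollary~\ref{cor:cl(x)_to_jac_generic} with Lemma~\ref{lem:jac_generic_to_vg}, then factor through higher-dimensional complete intersections) is essentially the paper's argument. But the point of the corollary is the clause ``without further assumptions on the ground field,'' i.e.\ the case $k=\overline{\mathbb F}_p$, and there your proposal has a genuine gap --- in fact two. First, your proposed fix, ``pass to a finitely generated subfield $k_1\not\subset\overline{\mathbb F}_p$ containing the data, apply N\'eron there, and descend,'' is vacuous when $k=\overline{\mathbb F}_p$: every finitely generated subfield of $\overline{\mathbb F}_p$ is a finite field, so no such $k_1$ exists, and N\'eron's theorem (which needs $\mathbb P^m(k_1)$ to be non-thin, Definition~\ref{defn:thin_set}) has nothing to act on. Second, and more fundamentally, the base case of your induction --- injectivity of $\Cl(X)\to\Cl(C_b)$ for curves $C_b$ defined over $k$ --- is simply \emph{false} over $\overline{\mathbb F}_p$ whenever $\Clns(X)$ has rank $\geq 2$. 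For example, take $X=\mathbb P^1\times\mathbb P^2$, so $\Cl(X)=\mathbb Z H_1\oplus\mathbb Z H_2$; for any curve $C\subset X$ with $d_i=\deg(H_i|_C)$, the class $(d_2H_1-d_1H_2)|_C$ has degree $0$, hence is a torsion point of $\mathbf{Pic}^0(C)(\overline{\mathbb F}_p)$ (every rational point of an abelian variety over $\overline{\mathbb F}_p$ is torsion), so a nonzero multiple of $d_2H_1-d_1H_2$ dies in $\Cl(C)$. Thus no curve over $\overline{\mathbb F}_p$ can serve as the bottom of your factorization $\Cl(X)\to\Cl(Y)\to\Cl(C)$; this is exactly why both the corollary and Proposition~\ref{prop:injectivity} stop at complete intersections of $\leq n-2$ members, i.e.\ dimension $\geq 2$.

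The paper resolves this by moving in the opposite direction from your descent: it base changes \emph{up} to a strictly larger algebraically closed field $k'\supsetneq k$ (so automatically $k'\neq\overline{\mathbb F}_p$). For a \emph{general line $L$ defined over $k$}, the base change $L_{k'}$ contains a $k'$-point $b$ in the good set $N$ of Lemma~\ref{lem:jac_generic_to_vg}; the injective composite $\Cl(X_{k'})\to\mathbf{Jac}(C'_\eta)\to\mathbf{Jac}(C'_b)(k')$ factors through $\Cl(S_{k'})$, where $S=\overline{\phi^{-1}(L)}$ is a complete intersection \emph{surface} defined over $k$. So the curve-level injectivity is only ever invoked at points of the big field, while the object one restricts to, $S$, lives over $k$. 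Injectivity of $\Cl(X_{k'})\to\Cl(S_{k'})$ is then descended to $\Cl(X)\to\Cl(S)$ using invariance of $\Clns$ under extension of algebraically closed fields (Lemma~\ref{lem:clns_field_extn}) together with $\Cl^0(X)\cong\Cl^0(S)$ and $\Cl^0(X_{k'})\cong\Cl^0(S_{k'})$ (Corollary~\ref{prop:cl^0_iso}); after that, the bootstrap to general divisors and general complete intersections proceeds as you describe. Without this ``enlarge the field, restrict to a $k$-rational surface, then descend'' mechanism, your argument proves the corollary only under the hypothesis $k\neq\overline{\mathbb F}_p$, which is weaker than the stated result.
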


\begin{proof}
We first show injectivity of the restriction map for a general complete intersection surface $S$ of members of $|V|$, since this will imply injectivity of the restriction map to any divisor in $|V|$ containing $S$.
If $k=\overline{\mathbb F}_p$, let $k\subsetneq k'$ be an extension of algebraically closed fields.
Let $N$ be the set of $k'$-points for which the specialization map of Lemma~\ref{lem:jac_generic_to_vg} is injective; the complement of $N$ is contained inside a field-locally thin subset of $\mathbb P^{n-1}(k')$.
Since for any finitely-generated field $k_1\subset k'$ the set $\mathbb P^{n-1}(k_1)\setminus N(k_1)$ is a finite union of type 1 and type 2 thin sets (Definition~\ref{defn:thin_set}), and since $\mathbb P^1(k_1)$ is not thin, then for a general $k$-line $L\subset\mathbb P_k^{n-1}$, the base change $L_{k'}$ will contain a $k'$-point $b$ in $N$ and such that $b\in U_{\mathrm{sm}}(k')$.
For such $L$ and $b\in L_{k'}(k')$, the compositions of the maps in Corollary~\ref{cor:cl(x)_to_jac_generic} and Lemma~\ref{lem:jac_generic_to_vg} is injective and factors through $\Cl(S_{k'})$, where $S=\overline{\phi^{-1}(L)}$ is the complete intersection surface corresponding to the line $L$, so $\Cl(X_{k'})\to\Cl(S_{k'})$ is injective.
The base change to $k'$ does not change $\Clns$ by Lemma~\ref{lem:clns_field_extn}, and we have $\Cl^0(X)=\Cl^0(S)$ and $\Cl^0(X_{k'})=\Cl^0(S_{k'})$ by Corollary~\ref{prop:cl^0_iso}, so we conclude that over $k$ the restriction map $\Cl(X)\to\Cl(S)$ is injective.

Since $|V|\subset|\mathcal L|$ was general, this implies that the restriction map to a general divisor in $|\mathcal L|$ is injective. Repeatedly applying the divisor case gives the result for general complete intersections.
\end{proof}

\section{Surjectivity}\label{sec:surj}

In this section we prove surjectivity modulo torsion of the restriction map on class groups:

\begin{thm}[Corollaries~\ref{cor:surj_for_L^2_higher_dim}, \ref{cor:other_fields_higher_dim}, and~\ref{cor:surj_for_2A+B_surface}]\label{prop:surjectivity}
Let $X$ be a normal projective variety of dimension $\geq 3$ over an algebraically closed field $k\neq\overline{\mathbb F}_p$, and let $\mathcal L_0$ be a very ample line bundle on $X$. Then there exists $Y\in |\mathcal L_0^{\otimes d}|$ in the complement of a countable union of proper subvarieties such that the restriction map
\[\xymatrix{ \Cl(X) \ar[r] & \Cl(Y)} \]
\begin{enumerate}
\item
has torsion cokernel for $d=4$ or $\geq 6$ if $\dim X=3$,
\item
is surjective for $d\geq 2$ if $\dim X\geq 4$ and if $k$ either
\begin{enumerate}
\item
has characteristic 0 or
\item
has infinite transcendence degree over its prime subfield, and
\end{enumerate}
\item
has $p$-power torsion cokernel for $d\geq 2$ if $\dim X\geq 4$ and $\chara k=p$.
\end{enumerate}
\end{thm}

The proof of Theorem~\ref{prop:surjectivity} proceeds in several steps. In \S\ref{sec:surj_pf_infinite_tdeg}, we first show surjectivity modulo torsion when the ground field $\mathbb K$ has infinite transcendence degree over its prime subfield. When \(\dim X=3\), the first step is to show surjectivity of the restriction map to a very general reducible $S_1+S_2$, using a theorem of Graber--Starr on extending sections of abelian schemes. Then a degeneration argument shows surjectivity (modulo torsion) to a very general $T\in|\mathcal L^{\otimes 2}|$ (Proposition~\ref{prop:surj_for_L^2_to_surfaces}), and an additional degeneration is used for surjectivity (modulo torsion) for odd multiples of $\mathcal L$ (Proposition~\ref{prop:surj_for_2A+B}). When \(\dim X\geq 4\), a direct application of Graber--Starr's theorem yields the result, and no degeneration is necessary (Corollary~\ref{cor:surj_for_L^2_higher_dim}). Finally, in \S\ref{section:other_fields}, we extend the result to any field $k\neq\overline{\mathbb F}_p$.

In the case when $X=\mathbb P^3$ and $\mathcal L=\mathcal O(2)$, the argument becomes very explicit, so before proceeding with the general proof, we first illustrate the strategy of the argument in the case of quartic $K3$ surfaces.

\subsection{Quartic surfaces in $\mathbb P^3$}\label{sec:quartic_surfaces}

Let $K\not\subset\overline{\mathbb F}_p$ be a field, $C\subset\mathbb P^3_K$ a degree 4 elliptic curve of rank at least 18, and $q_1,q_2,p_1,\ldots,p_{15}\in C(K)$ points that are independent in $\Pic(C)/\langle H\rangle$, where $H$ is the restriction of the hyperplane class on $\mathbb P^3$. For each $i=1,2$, the degree 2 line bundle $2q_i$ on $C$ uniquely determines a smooth quadric surface $Q_i=(f_i=0)$ such that the image of the restriction $\Pic(Q_i)\to\Pic(C)$ is generated by $2q_i$ and $H-2q_i$. The points $p_1,\ldots,p_{15}$ determine a quartic surface $T=(g=0)$ and a unique sixteenth point $p_{16}\in C(K)$ such that $T\cap C=p_1+\cdots+p_{16}$.

\begin{prop}
In the above setting, for any $\lambda\not\in\overline{K}$, the quartic surface defined by $f_1f_2+\lambda g=0$ has Picard rank 1.\end{prop}

\begin{proof}

Let $|\Lambda|\cong\mathbb P^1$ be the pencil in $|\mathcal O_{\mathbb P^3}(4)|$ spanned by $Q_1+Q_2$ and $T$, and let 
$\tilde{X}=\{ s_0 f_1f_2+s_1 g=0 \}\subset\mathbb P^3\times{\mathbb P^1}_{[s_0\colon s_1]}$
 be the total space of the pencil.

Now consider the Chow variety $\mathrm{WDiv}(\tilde{X}/{\mathbb P^1})$ of relative Weil divisors \cite[3.21.3]{kol_moduli_book}. The components of $\mathrm{WDiv}(\tilde{X}/{\mathbb P^1})$ are defined over $\overline{K}$, so for any $\lambda\not\in\overline{K}$ the point $[1\colon\lambda]\in{\mathbb P^1}$ will not be in the image of any component of $\mathrm{WDiv}(\tilde{X}/{\mathbb P^1})$ not dominating the base.
Let $T_\lambda=(f_1f_2+\lambda g=0)$ be the corresponding quartic surface, and let $D_\lambda$ be a curve on $T_\lambda$.
After a base change by a smooth curve $\Gamma\to{\mathbb P^1}$ we can find a divisor $\mathcal D$ on $\tilde{X}\times_{\mathbb P^1}\Gamma$ whose fiber over $\tilde{\lambda}$ is $D_\lambda$, where $\tilde{\lambda}\in\Gamma(\overline{K(\lambda)})$ maps to $[1\colon\lambda]\in{\mathbb P^1}(\overline{K(\lambda)})$; see the proof of Proposition~\ref{prop:surj_for_L^2_to_surfaces} for details.

Let $\tilde{0}\in\Gamma$ be a point mapping to $[1:0]\in{\mathbb P^1}(K)$, and denote the fiber $(\tilde{X}\times_{\mathbb P^1}\Gamma)_{\tilde{0}}=\tilde{Q}_1+\tilde{Q}_2$, so that $\tilde{Q}_i\xrightarrow{\cong} Q_i$ and $\tilde{C}:=\tilde{Q}_1\cap\tilde{Q}_2\xrightarrow{\cong}C$.
By a local computation (Lemma~\ref{lem:local_computation_cl}), the class group of the local ring of $\tilde{X}\times_{\mathbb P^1}\Gamma$ at a point $\tilde{x}$ in $\tilde{C}$ is either
\begin{itemize}
\item
$\mathbb Z/r\mathbb Z$ if $\tilde{x}$ maps to a point of $C\setminus\{p_1,\ldots,p_{16}\}$ and $r$ is the ramification index of $\Gamma\to\mathbb P^1$ at $\tilde{0}$, or
\item
$\mathbb Z$, generated by $(\tilde{Q}_1)_{\tilde{x}}$ (or equivalently $(-\tilde{Q}_2)_{\tilde{x}}$), if $\tilde{x}$ maps to some $p_j$.
\end{itemize}

Let $\tilde{p}_j\in\tilde{C}$ be the point mapping to $p_j$ for $1\leq j\leq 16$. Then $\mathcal D$ uniquely determines a Weil divisor class $D^\circ_i$ on $\tilde{Q}_i$, and the restriction $D^\circ_1|_{\tilde{Q}_1\cap\tilde{Q}_2}-D^\circ_2|_{\tilde{Q}_1\cap\tilde{Q}_2}$ is supported on $\{\tilde{p}_1,\ldots,\tilde{p}_{16}\}$.
Write $D^\circ_1|_{\tilde{Q}_1\cap\tilde{Q}_2}-D^\circ_2|_{\tilde{Q}_1\cap\tilde{Q}_2}=\sum_{j=1}^{16} a_j\tilde{p}_j$.
Then the divisor $\sum_{j=1}^{16} a_j p_j$ on $C$ is in the subgroup $\langle H, 2q_1,2q_2\rangle$ of $\Pic(C)$ generated by the images of $\Pic(Q_i)\to\Pic(C)$, so for some $b_1,b_2\in\mathbb Z$ we have
\[2b_1q_1+2b_2q_1+\sum_{j=1}^{16} a_j p_j\sim 2b_1q_1+2b_2q_1+4a_{16}H+\sum_{j=1}^{15} (a_j-a_{16}) p_j\in\langle H\rangle.\]
Since $q_1,q_2,p_1,\ldots,p_{15}$ are independent in $\Pic(C)/\langle H\rangle$, this implies $-2b_1=-2b_2=a_j-a_{16}=0$ for $1\leq j\leq 15$, so
\[ D^\circ_1|_{\tilde{Q}_1\cap\tilde{Q}_2}-D^\circ_2|_{\tilde{Q}_1\cap\tilde{Q}_2} \sim a_{16}\sum_{j=1}^{16}\tilde{p}_j .\]

Then the divisor $\mathcal D-a_{16} \tilde{Q}_1$ on $\tilde{X}\times_{\mathbb P^1}\Gamma$ is $\mathbb Q$-Cartier along $\tilde{C}$.
Let $D_i$ be the divisor on $\tilde{Q}_i$ defined by $\mathcal D-a_{16} \tilde{Q}_1$. By abuse of notation we also denote by $D_i$ the corresponding divisor under $\tilde{Q}_i\xrightarrow{\cong} Q_i$. Since $\Pic(Q_1)\times_{\Pic(C)}\Pic(Q_2)\cong\mathbb Z$ is generated by the restriction of the hyperplane class on $\mathbb P^3$, there is some $d\in\mathbb Z$ such that $D_1\in|\mathcal O_{\mathbb P^3}(d)|_{Q_1}|$ and $D_2\in|\mathcal O_{\mathbb P^3}(d)|_{Q_2}|$.

Now on $\tilde{X}\times_{\mathbb P^1}\Gamma$, the difference between the pullback of $\mathcal O_{\mathbb P^3}(d)$ and $\mathcal D-a_{16} \tilde{Q}_1$ is a divisor that is Cartier on the generic fiber, $\mathbb Q$-Cartier along the fiber $\tilde{Q}_1+\tilde{Q}_2$ over $\tilde{0}\in\Gamma$, and trivial when restricted to $\tilde{Q}_1+\tilde{Q}_2$.
So by injectivity of specialization of N\'eron--Severi groups, the fiber over $\tilde{\lambda}$ is in $\Cl^0(T_\lambda)=0$. That is, $D_\lambda\in|\mathcal O_{\mathbb P^3}(d)|_{T_\lambda}|$.

\end{proof}

\subsection{Surjectivity over fields of infinite transcendence degree}\label{sec:surj_pf_infinite_tdeg}
In this section we assume the ground field $\mathbb K$ has infinite transcendence degree over its prime subfield and use the notion of very general over $k_0$ discussed in \S\ref{sec:very_general}.

We will use the following result of Graber--Starr \cite{graber-starr13} on extending sections of abelian schemes. Their result is stated for very general incident line pairs and 2-planes over an uncountable field, but their proof works for fields of infinite transcendence degree.
\begin{thm}[{\cite[Theorem 1.3]{graber-starr13}}]\label{thm:gs_extend_sections}
Let $\mathbb K$ be an algebraically closed field of infinite transcendence degree, $B$ a normal variety of dimension $\geq 2$, and $f\colon B\to \mathbb P^{\dim B}$ a generically finite, generically unramified morphism. Let $\mathcal A\to B$ be an abelian scheme, and let $k_0 \subset\mathbb K$ be a field of definition for $\mathcal A$ and $B$. Then for any pair of incident lines $L_1\cup L_2\subset\mathbb P^{\dim B}$ very general over $k_0$, the restriction map
\[\text{\{sections of }\mathcal A\to B\}\to\{\text{sections of }\mathcal A|_{f^{-1}(L_1\cup L_2)}\to f^{-1}(L_1\cup L_2)\}\] is a bijection.
The same result also holds when the very general incident line pair $L_1\cup L_2$ is replaced by a very general 2-plane $P$.
\end{thm}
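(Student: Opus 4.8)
The plan is to verify that the argument of Graber--Starr \cite{graber-starr13} for Theorem~\ref{thm:gs_extend_sections} survives when the hypothesis ``$\mathbb K$ uncountable'' is weakened to ``$\mathbb K$ of infinite transcendence degree.'' In their proof, uncountability is used only to guarantee that a configuration avoiding a countable union of proper closed subvarieties of the relevant parameter space exists; I would replace each such choice by a pair $L_1\cup L_2$ (resp.\ a $2$-plane $P$) whose coordinates are transcendental over a field of definition $k_0$, as in Definition~\ref{defn:weil-fields}. Since each of the countably many ``bad'' loci appearing in their argument is closed and defined over $k_0$ (or over a countable algebraic extension of it), a configuration very general over $k_0$ automatically lies in none of them. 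After this reduction the task is to show that the restriction homomorphism on groups of sections is a bijection, which I would handle by splitting off the constant part and then treating injectivity and surjectivity separately.

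For the constant part I would invoke the Chow trace: by Lemma~\ref{lem:general_line_traceless} the trace of $\mathcal A\to B$ agrees with the trace of its restriction to $f^{-1}$ of a general line, hence a fortiori with its restriction to $f^{-1}(L_1\cup L_2)$, so the constant (trace) sections restrict bijectively. For the remaining non-constant part, the Lang--N\'eron theorem \cite{lang-neron} makes the group of sections modulo the trace finitely generated on both sides, and injectivity is the easy direction: a section $\sigma$ of $\mathcal A\to B$ restricting to the zero section over $f^{-1}(L_1\cup L_2)$ must vanish identically. Here I would use that ``very general over $k_0$'' means the configuration is the generic point of the space of incident pairs relative to a field of definition of $\sigma$; the condition $\sigma|_{f^{-1}(L_1\cup L_2)}=0$ is then a $k_0$-closed condition holding at this generic point, hence holding for every incident pair, so the closed zero locus of $\sigma$ contains $f^{-1}$ of a family of pairs sweeping out $B$ and therefore equals $B$.

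The main work, and the step I expect to be the principal obstacle, is surjectivity: extending a given section $\tau$ over $f^{-1}(L_1\cup L_2)$ to all of $B$. The plan is a three-step extension. First I would spread $\tau$ out over $f^{-1}(P)$ for a $2$-plane $P$ containing a very general pencil of incident line pairs through the common point, producing a section over $f^{-1}$ of a dense open family of such pairs. Next, sweeping these pairs across $B$ yields a rational section defined on a dense open subset of $B$. Finally, because $\mathcal A\to B$ is an abelian scheme over a normal base, the N\'eron extension property forces this rational section to extend across all codimension~$1$ points (good reduction holds everywhere), and normality of $B$ together with properness of $\mathcal A$ handles codimension $\geq 2$, so the rational section is in fact a morphism.

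The delicate point throughout is the gluing in the first two steps: the locally defined extensions over different members of the family must agree, that is, there can be no monodromy obstruction to assembling them into a single-valued section. This is precisely why an incident \emph{pair} is required rather than a single line, over which the Mordell--Weil rank can genuinely be larger: the common intersection point rigidifies the two restrictions, and choosing the configuration very general over $k_0$ eliminates the countably many potential consistency obstructions, each of which is defined over $k_0$. The $2$-plane version follows by the same argument applied to pencils of lines inside $P$, or directly from the line-pair case.
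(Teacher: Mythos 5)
Be aware of what the paper actually does with this statement: Theorem~\ref{thm:gs_extend_sections} is imported from \cite{graber-starr13} as a black box, and the paper's entire justification is the sentence preceding it --- Graber--Starr prove this over uncountable fields, and uncountability enters their proof only through the need to choose configurations avoiding countably many proper closed subvarieties defined over a field of definition, so the argument goes through verbatim when $\mathbb K$ has infinite transcendence degree, using ``very general over $k_0$'' in the sense of Definition~\ref{defn:weil-fields}. Your opening paragraph is exactly this observation, and that part is correct and is all the paper itself claims. The problem is that the rest of your proposal undertakes to reconstruct the proof of \cite{graber-starr13}, and the reconstruction breaks at the two places where that paper does its real work.

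First, the heart of surjectivity is your claim that the given section $\tau$ over $f^{-1}(L_1\cup L_2)$ ``spreads out'' to sections over the preimages of nearby incident pairs. This is precisely what must be proved, and it is not automatic: a section over the preimage of a special member of a family need not deform to nearby members at all. Indeed, for a \emph{single} line this genuinely fails --- the group of sections of $\mathcal A$ over $f^{-1}(L)$ is in general strictly larger than the image of the sections over $B$ (plus trace), i.e.\ the component of the space of sections-over-lines through a given section need not dominate the space of lines. The content of Graber--Starr's theorem is that incidence rules this jumping out; your phrase ``the common intersection point rigidifies the two restrictions'' names the desired conclusion but supplies no mechanism, so the argument is circular at its crucial step. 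Second, your final extension step is false as stated: properness of $\mathcal A\to B$ extends a rational section across codimension-one points (valuative criterion over DVRs), but extension across codimension $\geq 2$ requires $B$ to be \emph{regular} (Weil's extension theorem for rational maps to smooth group schemes); normality is not enough. Concretely, let $B$ be the projective cone over an elliptic curve $E\subset\mathbb P^2$, let $f\colon B\to\mathbb P^2$ be a general projection, and let $\mathcal A=E\times B$. Projection from the vertex gives a section of $\mathcal A$ over the complement of the vertex, hence over $f^{-1}(L_1\cup L_2)$ for any very general pair, but it extends to no section over $B$: every morphism $B\to E$ is constant, since $B$ is covered by rational curves all passing through the vertex. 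So ``normality of $B$ together with properness of $\mathcal A$ handles codimension $\geq 2$'' is not a valid principle; this example also shows that over a merely normal base the statement must be handled with care (in the paper's actual applications, Lemma~\ref{lem:surj_on_reducible_member} and Corollary~\ref{cor:surj_for_L^2_higher_dim}, the base is a smooth open subset of $\mathbb P^{n-1}$, where Weil's extension theorem does apply). A smaller issue of the same flavor appears in your injectivity argument: the section $\sigma$ need not be defined over $k_0$ or over any field fixed before the pair is chosen, so you cannot treat the pair as generic over a field of definition of $\sigma$; the standard repair is to run the argument over the countably many components of the space of sections, each of which \emph{is} defined over $k_0$ --- exactly the bookkeeping your sketch elides.
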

Applying their result for incident line pairs to the relative Jacobian shows surjectivity for a reducible member of $|\mathcal L^{\otimes 2}|$:

\begin{cor}\label{lem:surj_on_reducible_member}
In the setting of Assumption~\ref{assumption:inj}, assume that $X$ is defined over a field $\mathbb K$ of infinite transcendence degree and that $n\geq 3$.
Let $S_1,S_2$ be a pair of complete intersection surfaces of members of $|V|$ very general over $k_0$, and let $D_i$ be a Weil divisor on each $S_i$ such that $D_1|_{S_1\cap S_2}\sim D_2|_{S_1\cap S_2}$. Then there is a Weil divisor $D$ on $X$, uniquely determined up to linear equivalence, such that $D|_{S_i}\sim D_i$ for each $i$. That is, the restriction map \[ \Cl(X)\to\Cl(S_1)\times_{\Pic(S_1\cap S_2)}\Cl(S_2) \] is an isomorphism.
\end{cor}

\begin{proof}
Let $S_i'$ denote the strict transform of $S_i$, let $L_i$ be the line such that $S'_i=\phi'^{-1}(L_i)$, and consider the fiber product $\Cl(S'_1)\times_{\Pic(S_1\cap S_2)}\Cl(S'_2)$, where the restriction maps $\Cl(S_i')\to\Cl(S_i)\to\Pic(S_1\cap S_2)$ are pushforward by $\pi|_{S'_i}$ composed with the Gysin homomorphisms \cite[Chapter 6]{ful84}.
Let $D_i$ be divisors on $S_i$ whose restrictions agree in $\Pic(S_1\cap S_2)$. The pullback $\beta|_{S_i'}^*D_i$ of $D_i$ is defined because the base points of $|V|$ are smooth points of $S_i$ by generality.

The restrictions $(\beta|_{S_i'}^*D_i)|_{C'_\eta}$ have the same degree, so by the procedure of Lemma~\ref{lem:cl^0_to_sections} $D_1$ and $D_2$ define a section of $\mathbf{Jac}(X'/\mathbb P^{n-1})\to U_{\mathrm{sm}}$ over $(L_1\cup L_2)\cap U_{\mathrm{sm}}$.
This extends uniquely to a section $\sigma$ of $\mathbf{Jac}(X'/\mathbb P^{n-1})\to U_{\mathrm{sm}}$ by Theorem~\ref{thm:gs_extend_sections} {\cite[Theorem 1.3]{graber-starr13}}.
By Lemma~\ref{lem:cl^0_to_sections} we can find a divisor $D'$ on $X'$ defining $\sigma$ and such that $(\pi_* D')|_{S_i} \sim D_i$ for each $i$.
\end{proof}

The same argument applied to the 2-plane case of Theorem~\ref{thm:gs_extend_sections} gives surjectivity for complete intersection threefolds of members of $|V|$ that are very general over $k_0$. We also get surjectivity to higher-dimensional complete intersections by factoring the restriction map.
\begin{cor}\label{cor:surj_for_L^2_higher_dim}
In the setting of Assumption~\ref{assumption:inj}, assume that $X$ is defined over a field $\mathbb K$ of infinite transcendence degree and that $n\geq 4$.
For a complete intersection $Y$ of $r\leq n-3$ members of $|\mathcal L|$ very general over $k_0$, the restriction map
\[\xymatrix{ \Cl(X) \ar[r] & \Cl(Y)} \] is surjective.
\end{cor}

\begin{proof}
It suffices to show the case when $Y\in|V|$ is a divisor. Let $Z$ be a threefold obtained as the complete intersection of $Y$ with members of $|V|$ very general over $k_0$. The restriction map $\Cl(X)\to\Cl(Z)$ factors through $\Cl(Y)\to\Cl(Z)$, which is injective by Corollary~\ref{cor:inj_to_surfaces}, so to show the claim it suffices to show surjectivity of $\Cl(X)\to\Cl(Z)$. Let $Z'\subset X'$ be the strict transform of $Z$ and $P=\phi'(Z)\subset\mathbb P^{n-1}$ the corresponding $2$-plane.

By Lemma~\ref{lem:cl^0_to_sections}, a divisor $D$ on $Z$ defines a section of $\mathbf{Jac}(X'/\mathbb P^{n-1})\to U_{\mathrm{sm}}$ over $P\cap U_{\mathrm{sm}}$. This extends uniquely to a section $\sigma$ over $U_{\mathrm{sm}}$ by Theorem~\ref{thm:gs_extend_sections}. By Lemma~\ref{lem:cl^0_to_sections} we can find a divisor $D'$ on $X'$ defining $\sigma$ and such that $(\pi_* D')|_Z \sim D$.
\end{proof}

In order to specialize from a sufficiently general member of $|\mathcal L^{\otimes 2}|$ to a reducible one in $|\mathcal L|+|\mathcal L|$, we need the following conjecture of Koll\'ar on independence of intersection points \cite[Conjecture 15]{kol_determines} (which generalizes \cite[Sublemma]{Griffiths-Harris}). Voisin proved Koll\'ar's conjecture over fields of infinite transcendence degree, and we include her proof below.
This result will ensure in Proposition~\ref{prop:surj_for_L^2_to_surfaces} that the points $p_i$ are independent in $\Pic(C)$ modulo the hyperplane section, as in the case for quartic surfaces in $\mathbb P^4$ when $C$ was a degree 4 elliptic curve in \S\ref{sec:quartic_surfaces}.
\begin{prop}[{\cite{voisin20}}]\label{lem:kol_sections_indep_conj}
Let $\mathbb K$ be an algebraically closed field of infinite transcendence degree.
Let $C$ be a smooth projective curve over $\mathbb K$ and $\mathcal M$ a very ample line bundle on $C$.
Assume that either
\begin{enumerate}[label=(\alph*)]
\item\label{case:assumption-case-1} $\chara\mathbb K=0$, or 
\item\label{case:assumption-case-2} $\mathcal M=\mathcal M_0^{\otimes d}$ for an integer $d\geq 2$ and $\mathcal M_0$ a very ample line bundle.\end{enumerate}
Let $A\subsetneq\mathbf{Pic}^0(C)$ be an abelian subvariety and $\mathcal M\in{G}\subset\Pic(C)$ a finitely-generated subgroup.
Let $k^C\subset\mathbb K$ be a field of definition for $C$, $\mathcal M$, $A$, and $G$.
For a divisor $D\in |\mathcal M|$ write $\Supp D=\bigcup_i p_i(D)$. Then the map
\[ \bigoplus_i \mathbb Z[p_i(D)] / \sum_i [p_i(D)] \hookrightarrow\Pic(C)/\langle A(\mathbb K),{G}\rangle\]
is an injection for $D\in |\mathcal M|$ very general over $k^C$.
\end{prop}

\begin{proof}[Proof of Voisin]
Let $d=\deg \mathcal M$, let $Z\subset C\times|\mathcal M|$ be the universal family, and let $U\subset|\mathcal M|$ be the open subset parametrizing divisors consisting of $d$ distinct points. Let $Z^{(d)}\to|\mathcal M|$ be the universal family of divisors in $|\mathcal M|$ with an ordering of the $d$ (not necessarily distinct) points.

First we show that $Z^{(d)}_U$ is irreducible, or equivalently that the monodromy group of $Z_U\to U$ is the full symmetric group $S_d$ (i.e. that it satisfies the ``uniform position lemma" \cite{Harris-Galois}). It suffices to show that the monodromy group is 2-transitive and contains a transposition. 2-transitivity is equivalent to irreducibility of $Z^{(2)}_U$, which can be identified with the set $\{(p_1,p_2,D) \mid p_1,p_2\in\Supp D\}\subset C\times C\times U$. The fiber over a point $(p_1,p_2)\in C\times C$ is the linear system $|\mathcal M-p_1-p_2|$, so since $\mathcal M$ is very ample the dimension of this linear system is independent of the $p_i$ and $Z^{(2)}_U\to U$ is a projective bundle and thus irreducible.
Deforming a section of $\mathcal M$ with a double zero gives a transposition; such a section exists in case~\ref{case:assumption-case-1} by taking a section simply tangent to $C$ \cite[\S4]{Harris-Galois}, and in case~\ref{case:assumption-case-2} by the assumption that $d\geq 2$.\footnote{If $\chara\mathbb K=p>0$ and $d=1$, it can happen that no such section of $\mathcal M$ with exactly one double zero exists, e.g. if every tangent point is a flex point.}

Next, for $\gamma\in{G}$ and integers $n_1,\ldots,n_d,m\in\mathbb Z$ such that $m\neq 0$ and $(n_1,\ldots,n_d)\neq(0,\ldots,0)$, define the subset
\[ Z(n_1,\ldots,n_d,m,\gamma)=\left\{ (p_1,\ldots,p_d,\sum_{i=1}^d p_i ) \in Z_U^{(d)} \mid m(\gamma-\sum_{i=1}^d n_i[p_i])\in A\right\}
\]
of $Z_U^{(d)}\subset C\times\cdots\times C\times |\mathcal M|$.
Then \[\bigcup_{(n_1,\ldots,n_d,m,\gamma)} Z(n_1,\ldots,n_d,m,\gamma) \subset Z_U^{(d)}\] is a countable union of $k^C$-subvarieties, and to show that the complement contains $\mathbb K$-points, it suffices to show that each $Z(n_1,\ldots,n_d,m,\gamma) \subsetneq Z_U^{(d)}$.

By contradiction, assume that some $Z(n_1,\ldots,n_d,m,\gamma) = Z_U^{(d)}$.
Then $m(\gamma-\sum_{i=1}^d n_i[p_i])\in A$ for every $(p_1,\ldots,p_d,\sum_{i=1}^d p_i)\in Z_U^{(d)}$, and since the monodromy group is the entire symmetric group $S_d$, this also holds for any permutation of the $p_i$.
Now the permutation representation on $\mathbb Q^d$ is the sum of two irreducible representations: the diagonal and its complement, which is spanned by differences of the basis vectors. So we have that either $n_1=\cdots =n_d$, or that $m(\gamma-n[p_1]+n[p_2])\in A$ for any $p_1,p_2\in C$. In the second case we get that $\textbf{Pic}^0(C)/A$ is torsion since the differences $[p_1]-[p_2]$ generate $\Pic^0$, which is a contradiction. So we are in the first case, i.e. a multiple of $\sum_{i=1}^d p_i$.
\end{proof}

We will also need to replace $X$ by suitable alteration in the proof of Proposition~\ref{prop:surj_for_L^2_to_surfaces} to use an argument involving specialization of $\mathbb Q$-Cartier divisors.
\begin{lem}\label{lem:partial_alteration}
Let $X$ be a normal projective threefold over an algebraically closed field $k$, $\mathcal L$ a very ample line bundle on $X$, and $|\Lambda|\subset|\mathcal L|$ a general pencil. Then there exists a morphism from a projective variety $\psi\colon X^+\to X$ such that
\begin{enumerate}
\item\label{item:partial_alteration_pi}
$\psi\colon X^+\to X$ is finite and purely inseparable over the regular locus of $X$,
\item\label{item:partial_alteration_q-factorial}
$X^+$ is $\mathbb Q$-factorial except possibly over finitely many points $q_1,\ldots, q_r$ of $X$,
\item\label{item:partial_alteration_L}
the pullback of a general member of $|\Lambda|$ to $X^+$ is smooth, and
\item\label{item:partial_alteration_L^b}
for any $b\geq 1$ the pullback of a general member of $|\mathcal L^{\otimes b}|$ is smooth along the exceptional divisors of $\psi\colon X^+\to X$. In particular, the pullback of a general member of $|\mathcal L^{\otimes b}|$ is $\mathbb Q$-factorial.
\end{enumerate}
\end{lem}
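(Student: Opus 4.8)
The plan is to construct $\psi$ as a resolution of singularities that is forced to be only a finite purely inseparable cover, rather than a modification, over $X^{\mathrm{reg}}$. In characteristic $0$ this is immediate: take $\psi\colon X^+\to X$ to be a projective resolution. It is an isomorphism over $X^{\mathrm{reg}}$, so (\ref{item:partial_alteration_pi}) holds; $X^+$ is smooth, hence $\mathbb Q$-factorial everywhere and (\ref{item:partial_alteration_q-factorial}) holds with $r=0$; and (\ref{item:partial_alteration_L})--(\ref{item:partial_alteration_L^b}) follow from generic smoothness applied to the pencil $|\Lambda|$ and to $|\mathcal L^b|$, using that a general member cuts the finitely many exceptional divisors in a general, hence smooth, hyperplane section. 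The entire difficulty is in characteristic $p$, where generic smoothness and birational resolution at the generic points of $\Sing(X)$ may fail because the residue fields there are imperfect, and one is forced to use de Jong's alterations together with a purely inseparable cover.

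In characteristic $p$ I would proceed in two stages. First, using a projective Galois alteration $W\to X$ with regular $W$ and finite group $G$ for which $k(W)^G/k(X)$ is purely inseparable, I would let $X^+_0$ be the normalization of $X$ in $k(W)^G$; this is finite purely inseparable over $X$, and I would choose the alteration so that $X^+_0$ is regular in codimension two over $X^{\mathrm{reg}}$. Second, over the at most one-dimensional preimage of $\Sing(X)$ in $X^+_0$ I would resolve the transverse surface singularities of $X^+_0$ using resolution of excellent two-dimensional schemes, obtaining $\psi\colon X^+\to X^+_0\to X$ that is an isomorphism onto $X^+_0$ over $X^{\mathrm{reg}}$---hence finite purely inseparable there, giving (\ref{item:partial_alteration_pi})---and regular in codimension two everywhere. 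For (\ref{item:partial_alteration_q-factorial}), I would note that regularity in codimension two makes $X^+$ $\mathbb Q$-factorial at all codimension-two points, so the non-$\mathbb Q$-factorial points lie in codimension three and form a finite set $q_1,\dots,q_r$; the $\mathbb Q$-factoriality of $X^+_0$ over $X^{\mathrm{reg}}$ itself follows from the transfer identity $\iota_*\iota^*=\deg\iota$ applied to the finite inclusion into the regular (by Kunz's theorem and $F$-finiteness) overring $\widehat{\mathcal O}_{X,x}^{1/p^m}$, whose local class group vanishes, forcing the local class groups of $X^+_0$ to be $p$-power torsion.

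Finally, for (\ref{item:partial_alteration_L}) and (\ref{item:partial_alteration_L^b}) I would check the Bertini statements on the regular model $W$ and descend, applying the characteristic-free Bertini theorems (Lemmas~\ref{lem:bertini_va} and \ref{lem:bertini_normal}) to $W\to X\hookrightarrow\mathbb P^N$: a general member of $|\Lambda|$, respectively of $|\mathcal L^b|$, pulls back to a divisor that is regular and meets the exceptional divisors of $\psi$ (all lying over $\Sing(X)\cup\{q_i\}$) in a general hyperplane section, hence smoothly; a general member also avoids the finite set $\{q_i\}$, so its pullback is smooth along the exceptional locus and therefore $\mathbb Q$-factorial. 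I expect the main obstacle to be the positive-characteristic input in the second paragraph: arranging the purely inseparable cover $X^+_0$ to be regular in codimension two over $X^{\mathrm{reg}}$ while keeping $\psi$ finite there, and confining both the resolution in codimension two and the residual non-$\mathbb Q$-factoriality to finitely many closed points. This is exactly where the imperfection of the residue fields at the generic points of $\Sing(X)$ intervenes, and is the reason $\psi$ must be allowed to be a nontrivial purely inseparable cover rather than a birational modification over $X^{\mathrm{reg}}$.
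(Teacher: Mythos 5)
Your characteristic-zero reduction agrees with the paper's (any resolution works there), and your transfer argument---that a normal variety admitting a finite purely inseparable morphism to a regular scheme has local class groups killed by the degree, hence is $\mathbb Q$-factorial---is correct and is a clean route to property (\ref{item:partial_alteration_q-factorial}) over $X^{\mathrm{reg}}$; it even makes your demand that the de Jong cover $X^+_0$ be ``regular in codimension two over $X^{\mathrm{reg}}$'' (which de Jong's theorem gives you no way to arrange) unnecessary. The genuine gap is in properties (\ref{item:partial_alteration_L}) and (\ref{item:partial_alteration_L^b}). Your $X^+$ is produced from a Galois alteration chosen with no reference to the pencil $|\Lambda|$, and you then invoke Lemmas~\ref{lem:bertini_va} and~\ref{lem:bertini_normal} for the pullbacks of $|\Lambda|$ and $|\mathcal L^b|$. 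Those Bertini theorems concern hyperplane sections of a scheme \emph{embedded} in projective space; on $X^+$ (or on the regular model $W$) the relevant linear systems are $\psi^*|\Lambda|$ and $\psi^*|\mathcal L^b|$, whose associated morphisms factor through the degree-$p^e$ purely inseparable map $\psi$ (respectively through the alteration $W\to X$) and are therefore not embeddings. For such pulled-back systems Bertini fails in characteristic $p$, and this failure is exactly the difficulty the lemma exists to circumvent. Concretely, if on a chart $X^+=\Spec\mathcal O_X[w]/(w^p-f)$, then the pullback of a member $(g=0)$ of $|\mathcal L|$ is singular precisely at the critical points of $f$ restricted to $(g=0)$, because $d(w^p)=0$ kills the only term that could restore transversality; for general $g$ this critical locus is nonempty once the degrees involved are positive. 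So property (\ref{item:partial_alteration_L}) is not merely unproven for your $X^+$: it is false in general.

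This is why the paper's construction, unlike yours, is built out of $|\Lambda|$ itself. The generic member $Y$ of the pencil is a normal surface over the function field $K$ of $|\Lambda|\cong\mathbb P^1$; after a purely inseparable extension $K^{1/p^e}$ it acquires a resolution $Y^+$ that is smooth over $K^{1/p^e}$; spreading out gives a smooth surface fibration $\mathcal Y^+\to U^{1/p^e}$ over an open subset of $|\Lambda|^{1/p^e}$, which is glued to the normalization of $X^{\mathrm{reg}}$ in the corresponding inseparable extension, and $X^+$ is a projective closure of the result. The inseparability of $\psi$ is thus ``along the base of the pencil,'' so members of $|\Lambda|$ pull back to fibers of the smooth family: property (\ref{item:partial_alteration_L}) holds by construction, and property (\ref{item:partial_alteration_L^b}) follows by degenerating a general member of $|\mathcal L^b|$ to a sum of $b$ general members of $|\mathcal L|$, which in turn specialize to pencil members, and using openness of smoothness along the proper exceptional locus. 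Any successful proof has to let $|\Lambda|$ drive the construction of $X^+$; an alteration chosen independently of the pencil will not, in general, satisfy (\ref{item:partial_alteration_L}).
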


\begin{proof}
If $\chara k=0$, then any resolution of singularities $\psi\colon X^+ \to X$ works, so assume $\chara k=p>0$.
Let $\mathcal Y\to|\Lambda|$ be the total space of the pencil. The generic fiber $Y$ is a geometrically normal surface over the function field $K$ of $|\Lambda|\cong\mathbb P^1$, and after a finite purely inseparable field extension the base change $Y\otimes_K K^{1/p^e}$ admits a resolution of singularities $Y^+$ that is smooth over $K^{1/p^e}$. (This is because the base change of $Y$ to the perfect closure $K^{1/p^\infty}$ of $K$ admits a resolution of singularities that is smooth over $K^{1/p^\infty}$ \cite{Abhyankar57}, and this is all defined over some finite subextension of $K\subset K^{1/p^\infty}$.)
For a $k$-variety, we use the superscript $^{1/p^e}$ to denote the source of the $e\textsuperscript{th}$ iterate of relative Frobenius over $k$. Spreading out $Y^+$ and $Y\otimes_K K^{1/p^e}$ over an open subset $U^{1/p^e}$ of $|\Lambda|^{1/p^e}$ we get a smooth family $\mathcal Y^+\to U^{1/p^e}$ of surfaces, a family $\mathcal Y|_U\times_U U^{1/p^e}$ of normal surfaces, and a diagram
\[\xymatrix{ \mathcal Y^+ \ar[rr]^-{\text{bir}} & & \mathcal Y|_U\times_U U^{1/p^e} \ar[rr]^-{\text{univ homeo}}  \ar[d] & & \mathcal Y|_U \ar[rr]^-{\text{bir}}_-{\Bl_{\Bs|\Lambda|}} \ar[d] & & \pi_1(\mathcal Y|_U) \subset X \ar@{-->}[lld] \\
& & U^{1/p^e} \ar[rr] & & U } \]
where $\pi_1(\mathcal Y|_U)$ is a dense open subset of $X$ whose complement is the union of finitely many divisors $Y_1,\ldots,Y_l$ in $|\Lambda|$. By Bertini's theorem (after possibly removing finitely many points from $U$) the birational morphism $\mathcal Y^+\to\mathcal Y|_U\times_U U^{1/p^e}$ is an isomorphism over the regular locus of $X$.

Let $\theta\colon Z \to X$ be the normalization of $X$ in $K^{1/p^e}$. Then $\theta^{-1}(X^{\mathrm{reg}})\to X^{\mathrm{reg}}$ is a purely inseparable cover 
that agrees with $\mathcal Y^+$ over the intersection $X^{\mathrm{reg}}\cap \pi_1(\mathcal Y|_U)$, so we may glue $\theta^{-1}(X^{\mathrm{reg}})$ to $\mathcal Y^+$ to get a $\mathbb Q$-factorial variety with a morphism
\[\xymatrix{ X^+_0:=\theta^{-1}(X^{\mathrm{reg}})\cup\mathcal Y^+ \ar[r] & X^{\mathrm{reg}}\cup\pi_1(\mathcal Y|_U)}=X\setminus\{q_1,\ldots,q_r\} \]
that factors as the composition of a purely inseparable cover and a birational morphism.
The finitely many points $q_1,\ldots,q_r$ are given by the intersection of $Y_1\cup\cdots\cup Y_l$ with the singular locus of $X$, since by generality the base locus of $|\Lambda|$ is contained in the regular locus of $X$.

Let $X^+$ be a projective closure of $X^+_0$. Over $X\setminus\{q_1,\ldots,q_r\}$ the morphism $\psi\colon X^+\to X$ agrees with $X^+_0\to X$, so properties \eqref{item:partial_alteration_pi} and \eqref{item:partial_alteration_q-factorial} hold.
By construction, the pullback of any member $\neq Y_1,\ldots,Y_l$ of $|\Lambda|$ to $X^+$ is smooth except possibly over $\Bs|\Lambda|$. So the pullback of a general member of $|\Lambda|$ is smooth over the singular locus of $X$, i.e. along the exceptional divisors of $\psi$,
and by openness of smoothness this holds for a general member of $|\mathcal L|$ as well.
Applying this to sums of $b$ general members of $|\mathcal L|$ and again using openness of smoothness, the pullback of a general member of $|\mathcal L^{\otimes b}|$ is smooth along the exceptional divisors of $\psi$. In particular, by Bertini's theorem, the pullback of a general member of $|\mathcal L^{\otimes b}|$ is $\mathbb Q$-factorial.
\end{proof}

We are now ready to show surjectivity for $\mathcal L^{\otimes 2}$.
Beginning with a divisor on a very general member $T$ of $|\mathcal L^{\otimes 2}|$, we degenerate to a reducible member. By Corollary~\ref{lem:surj_on_reducible_member} the divisor on this reducible member will lift to a divisor $D^X$ on $X$, and we show that up to torsion and $\Cl^0(X)$, the restriction of $D^X$ to $T$ agrees with the original divisor.
Even when $X$ is smooth, the family is singular, so to deal with these singularities we apply Proposition~\ref{lem:kol_sections_indep_conj} to ensure that the divisors we work with are $\mathbb Q$-Cartier in some necessary places.

\begin{prop}\label{prop:surj_for_L^2_to_surfaces}
In the setting of Assumption~\ref{assumption:inj}, assume that $X$ is defined over a field $\mathbb K$ of infinite transcendence degree and that $n\geq 3$.
Let $S_1,S_2$ be complete intersection surfaces in $|V|$ that are very general over $k_0$, and pick a pencil in $|\mathcal L^{\otimes 2}|$ through $S_1+S_2$ that is very general over $\overline{k^{S_1}}$ and $\overline{k^{S_2}}$.

If $T$ is a complete intersection surface of $n-3$ members of $|V|$ and a member of $|\mathcal L^{\otimes 2}|$ and if $T$ is very general over the algebraic closure of the compositum of the fields of definition for $S_1$, $S_2$, and the pencil, then the restriction map
\[\xymatrix{ \Cl(X) \ar[r] & \Cl(T)} \]
has torsion cokernel.
\end{prop}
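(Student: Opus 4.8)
The plan is to reduce to the case $n=3$ and then run a degeneration argument parallel to the explicit one for quartics in \S\ref{sec:quartic_surfaces}, with the relative Jacobian and Lemma~\ref{lem:surj_on_reducible_member} playing the role that the explicit quadric computation plays there. First I would cut down: intersecting $X$ with the $n-3$ members of $|V|$ appearing in the definition of $T$ produces a normal threefold $W$ (Lemma~\ref{lem:bertini_normal}) containing $S_1$, $S_2$ as members of $|V|_W|$ and $T$ as a member of $|\mathcal{L}^2|_W|$; set $C=S_1\cap S_2$, a complete intersection curve. Since $\Cl(X)\to\Cl(W)$ is surjective by Corollary~\ref{cor:surj_for_L^2_higher_dim} when $n\geq 4$ (and is the identity when $n=3$), it suffices to prove that $\Cl(W)\to\Cl(T)$ has torsion cokernel, so I may assume $n=3$ and write $X$ for $W$.

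Fix a curve $D_T$ on $T$; the goal is to produce $m>0$ with $mD_T$ in the image of $\Cl(X)$. Let $|\Lambda|\cong\mathbb{P}^1$ be the pencil in $|\mathcal{L}^2|$ spanned by $S_1+S_2$ and $T$, with total space $\tilde{X}\subset X\times\mathbb{P}^1$. As in \S\ref{sec:quartic_surfaces}, the components of the Chow variety $\mathrm{WDiv}(\tilde{X}/\mathbb{P}^1)$ are defined over a countable field, so since $T$ is very general---hence transcendental---over the compositum of the fields of definition of $S_1$, $S_2$ and the pencil, the point of $\mathbb{P}^1$ it represents lies outside the image of every component not dominating the base. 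After a base change $\Gamma\to\mathbb{P}^1$ by a smooth curve and an application of Lemma~\ref{lem:partial_alteration} to pass to a model $X^+\to X$ that is $\mathbb{Q}$-factorial away from finitely many points and along whose exceptional locus the pullbacks of general $|\mathcal{L}^2|$-members are smooth, I can spread $D_T$ out to a divisor $\mathcal{D}$ on $\tilde{X}\times_{\mathbb{P}^1}\Gamma$ whose generic fiber is $D_T$ and which specializes over the reducible member $S_1+S_2$.

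Next I would analyze the special fiber. The limit of $\mathcal{D}$ determines Weil divisor classes $D_1^\circ$, $D_2^\circ$ on the components $S_1$, $S_2$, and by the local class-group computation along $C$ (the analogue of Lemma~\ref{lem:local_computation_cl}) the difference $D_1^\circ|_C-D_2^\circ|_C$ is supported on the finitely many special points of $C$: the base points of $|V|$ together with the points where $C$ meets the distinguished members of the pencil. This is where the very-general hypotheses enter decisively. Applying Voisin's independence result (Lemma~\ref{lem:kol_sections_indep_conj}) with $\mathcal{M}=\mathcal{L}^2|_C$, with $A$ the image of $\mathbf{Cl}^0(X)$ in $\mathbf{Pic}^0(C)$---which is a proper abelian subvariety by the sectional-genus bound of Lemma~\ref{lem:cokernel_to_Jac(C)}---and with $G$ generated by the restricted hyperplane class and the images of $\Pic(S_1)$ and $\Pic(S_2)$, these points are independent in $\Pic(C)$ modulo $\langle A(\mathbb{K}),G\rangle$. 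Independence forces $D_1^\circ|_C-D_2^\circ|_C$ to be an integer multiple of the sum of the special points, so after subtracting the corresponding multiple of the component $S_1$ the adjusted divisor becomes $\mathbb{Q}$-Cartier along $C$.

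Finally I would lift and compare. The adjusted classes on $S_1$ and $S_2$ now agree on $C$, so Lemma~\ref{lem:surj_on_reducible_member} produces a divisor $D^X$ on $X$ restricting to them. The difference between the pullback of $D^X$ and the adjusted $\mathcal{D}$ is Cartier on the generic fiber, $\mathbb{Q}$-Cartier along the special fiber, and trivial on $S_1+S_2$; by injectivity of the specialization of N\'eron--Severi groups its restriction to $T$ lies in $\Cl^0(T)$. Since $\Cl^0(X)\cong\Cl^0(T)$ by Corollary~\ref{prop:cl^0_iso}, this exhibits a multiple of $D_T$ as a restriction from $\Cl(X)$, giving torsion cokernel. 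I expect the main obstacle to be precisely the $\mathbb{Q}$-Cartier analysis of the previous paragraph: the total space $\tilde{X}$ is singular even when $X$ is smooth, so divisors cannot be restricted to the special fiber naively, and making the limit of $\mathcal{D}$ genuinely $\mathbb{Q}$-Cartier along $C$---so that specialization of N\'eron--Severi applies---is exactly what Lemma~\ref{lem:partial_alteration} (in positive characteristic) and Voisin's Lemma~\ref{lem:kol_sections_indep_conj} are there to guarantee.
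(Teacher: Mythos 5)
Your proposal reconstructs the paper's proof essentially step for step: the same reduction to $n=3$ via Corollary~\ref{cor:surj_for_L^2_higher_dim}, the same spreading-out of $D_T$ via $\mathrm{WDiv}(\tilde{X}/\mathbb{P}^1)$ and a curve base change $\Gamma\to\mathbb{P}^1$, the same local class-group computation (Lemma~\ref{lem:local_computation_cl}), Voisin's independence Lemma~\ref{lem:kol_sections_indep_conj} to make $\mathcal{D}-a\tilde{S}_1$ $\mathbb{Q}$-Cartier along $C$, lifting by Lemma~\ref{lem:surj_on_reducible_member}, N\'eron--Severi specialization, and Lemma~\ref{lem:partial_alteration} when $X$ is singular. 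The one detail to tighten is that the group $G$ in Voisin's lemma must contain the images of the full Weil divisor class groups $\Cl(S_i)\to\Pic(C)$ (finitely generated modulo $A$ by Theorem~\ref{thm:structure_class_group}), not just the images of $\Pic(S_i)$, since when $X$ is singular the limit classes $D_i^\circ$ are only Weil divisor classes; also the difference $D_1^\circ|_C-D_2^\circ|_C$ is in fact supported only on $C\cap T$, the base points of $|V|$ being harmless because the local class groups there are finite.
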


\begin{proof}

We may assume $n=3$ by Corollary~\ref{cor:surj_for_L^2_higher_dim}.
Let $S_1,S_2\in|V|$ be very general over $k_0$. Then each $S_i$ is a normal surface, $S_1+S_2$ is a reducible member of $|V|+|V|\subset|\mathcal L^{\otimes 2}|$, and $C:=S_1\cap S_2$ is a smooth curve contained in the regular locus of $X$.
Let $s\in|\mathcal L^{\otimes 2}|$ be the point corresponding to $S_1+S_2$, take a pencil through $s$ that is very general over $\overline{k^{S_1}}$ and $\overline{k^{S_2}}$, and let $\tilde{X}\to\mathbb P^1$ be the total space of the pencil. Then $\beta\colon\tilde{X}\to X$ is the blowup along the reducible curve given by the base locus.

The Chow variety $\mathrm{WDiv}(\tilde{X}/\mathbb P^1)$ parametrizing Weil divisors on the fibers of $\tilde{X}\to \mathbb P^1$ \cite[3.21.3]{kol_moduli_book} has countably many components not dominating the base. The image of each of these components is a point defined over the algebraic closure of the compositum of $k^{S_1}$, $k^{S_2}$, and a field of definition for the pencil. So any $\mathbb K$-point $g$ corresponding to a divisor $T=\tilde{X}_{g}$ very general over this field will not be in the image of any of these components. Let $\{p_j\}$ denote the intersection points $C\cap T$.

Let $D_{g}$ be an integral divisor on the normal surface $T$, and consider the component $\Omega$ of $\mathrm{WDiv}(\tilde{X}/\mathbb P^1)$ containing $[D_{g}]$; by assumption $\Omega$ dominates $\mathbb P^1$. Let $\Gamma\subset\Omega$ be a curve passing through the point $[D_{g}]$ and dominating the base $\mathbb P^1$. Then the base change $\mathrm{WDiv}(\tilde{X}/\mathbb P^1)\times_{\mathbb P^1}\Gamma=\mathrm{WDiv}(\tilde{X}\times_{\mathbb P^1}\Gamma/\Gamma)\to\Gamma$ admits a section whose image contains the divisor $[D_{g}]$ on the fiber $(\tilde{X}\times_{\mathbb P^1}\Gamma)_{\tilde{{g}}}=\tilde{X}_{g}$, where $\tilde{{g}}$ maps to ${g}$.
That is, we have a divisor $\mathcal D$ on $\tilde{X}\times_{\mathbb P^1}\Gamma$ whose restriction to the fiber over $\tilde{{g}}$ is $D_{g}$, and whose restriction to any fiber is a 1-cycle on the corresponding complete intersection surface.
Denote the base change $\alpha\colon\tilde{X}\times_{\mathbb P^1}\Gamma\to\tilde{X}$.

Let $\tilde{s}\in\Gamma(\mathbb K)$ map to $s$, and let $r$ be the ramification index of $\Gamma\to\mathbb P^1$ at $\tilde{s}$. The fiber $(\tilde{X}\times_{\mathbb P^1}\Gamma)_{\tilde{s}}$ has two irreducible components $\tilde{S}_1$ and $\tilde{S}_2$, with each $\tilde{S}_i$ mapping isomorphically to $S_i$. Let $\tilde{p}_j\in\tilde{S}_1\cap\tilde{S}_2$ denote the point mapping to $p_j\in C$.

The restriction of $\mathcal D_{\tilde{s}}$ to each $\tilde{S}_i$ determines a unique Weil divisor class $D^\circ_i$ in each $\Cl(\tilde{S}_i)$.
By Lemma~\ref{lem:local_computation_cl} we know that on $(\tilde{X}\times_{\mathbb P^1} \Gamma)\setminus\{\tilde{p}_j\}$ every Weil divisor is locally Cartier along $\tilde{S}_1\cap\tilde{S}_2\setminus\{\tilde{p}_j\}$ after multiplication by $r$.
So the difference $D^\circ_1|_{\tilde{S}_1\cap\tilde{S}_2}-D^\circ_2|_{\tilde{S}_1\cap\tilde{S}_2}$ is a divisor on $\tilde{S}_1\cap\tilde{S}_2$ supported on $\{\tilde{p}_j\}$.
The local computation also shows that the local class group is $\mathbb Z=\langle(\tilde{S}_1)_{\tilde{p}_j}\rangle=\langle(-\tilde{S}_2)_{\tilde{p}_j}\rangle$ at each $\tilde{p}_j$.

We now apply Proposition~\ref{lem:kol_sections_indep_conj} to the very ample line bundle $\mathcal L^{\otimes 2}|_C$ on $C$ and to the subgroups $A,G$ defined as follows. For $i=0,1$, choose a finite set of elements in $\Cl(\tilde{S}_i)$ whose images generate $\Clns(\tilde{S}_i)$; this is possible by the Theorem of the Base (Theorem~\ref{thm:structure_class_group}). Let $G\subset\Pic(C)$ be the subgroup generated by their images under the restriction maps
\[\xymatrix{ \Cl(\tilde{S}_i) \ar[r]^-{(\alpha\circ\beta)_*} & \Cl(S_i) \ar[r] & \Pic(C)}, \] together with $\mathcal L^{\otimes 2}|_C$. Let $A\subset\Pic^0(C)$ be the subgroup generated by the images of $\Cl^0(\tilde{S}_i)$ under these restrictions for $i=0,1$. Lemma~\ref{lem:cokernel_to_Jac(C)} shows that $A$ satisfies the hypotheses of Proposition~\ref{lem:kol_sections_indep_conj}, so we conclude that  $(\beta_*\alpha_*D^\circ_1)|_C-(\beta_*\alpha_*D^\circ_2)|_C=a \sum p_j \in |\mathcal L^{2a}|_C|$ for some $a\in\mathbb Z$.

We now consider instead the divisor $\mathcal D-a\tilde{S}_1$ on $\tilde{X}\times_{\mathbb P^1}\Gamma$.
On the fiber over $\tilde{g}$ this agrees with $\mathcal D_{\tilde{g}}=D_g$.
Over $\tilde{s}$,
the fiber $(\mathcal D-a\tilde{S}_1)_{\tilde{s}}$ determines a unique Weil divisor class $D_i$ in each $\Cl(\tilde{S}_i)$, and $\mathcal D-a \tilde{S}_1$ is $\mathbb Q$-Cartier along $C$ since it becomes trivial in the local class group at each $\tilde{p}_j$. So the restrictions of $\beta_*\alpha_* D_1$ and $\beta_*\alpha_* D_2$ to $C$ agree in $\Pic(C)$, and
the divisors $\beta_*\alpha_* D_1$ on $S_1$ and $\beta_*\alpha_* D_2$ on $S_2$ satisfy the hypotheses of Corollary~\ref{lem:surj_on_reducible_member}. So we get a unique Weil divisor class $D^X\in\Cl(X)$ such that $D^X|_{S_i}\sim\beta_*\alpha_* D_i$ for $i=0,1$.
We now have two divisors $D^X|_{T}$ and $D_g$ on $T$.

\noindent\textbf{Claim:} $D^X|_T- D_g$ is torsion in $\Clns(T)$.

The claim implies surjectivity of $\Cl(X)\to\Cl(T)$ up to torsion, since $\Cl^0(X)\cong\Cl^0(T)$ by Corollary~\ref{prop:cl^0_iso}. So it remains to show the claim.

Let $(\beta\circ\alpha)^*D^X$ denote the divisor $(D^X\times \Gamma)|_{\tilde{X}\times_{\mathbb P^1}\Gamma}$ on $\tilde{X}\times_{\mathbb P^1}\Gamma$.
The idea of the proof is that $(\beta\circ\alpha)^*D^X-(\mathcal D-a\tilde{S}_1)$ is a divisor on $\tilde{X}\times_{\mathbb P^1}\Gamma$ whose specialization to the reducible fiber $\tilde{s}$ is trivial. So if it is $\mathbb Q$-Cartier, then we can conclude by specialization of the N\'eron--Severi group that a multiple is algebraically trivial on the very general fiber $\tilde{g}$. In general we can reduce to the case where the divisor is $\mathbb Q$-Cartier in the necessary places.

First assume $X$ is smooth. Then the generic fiber $\tilde{X}_\eta$ of $\tilde{X}\to\mathbb P^1$ is a smooth surface over the function field of $\mathbb P^1$.
So the generic fiber $(\tilde{X}\times_{\mathbb P^1}\Gamma)_\xi$ of the base change $\tilde{X}\times_{\mathbb P^1}\Gamma\to\Gamma$ is also smooth over the function field of $\Gamma$, and the divisor $(\beta\circ\alpha)^*D^X-(\mathcal D-a\tilde{S}_1)$ on $\tilde{X}\times_{\mathbb P^1}\Gamma$ is Cartier on the generic fiber. It's also $\mathbb Q$-Cartier on the reducible fiber $\tilde{S}_1+\tilde{S}_2$ over $\tilde{s}$, since $D^X$ is Cartier on $X$ and $\mathcal D-a\tilde{S}_1$ was chosen to be $\mathbb Q$-Cartier (with Cartier index $r$) along $\tilde{S}_1\cap\tilde{S}_2$. So $r((\beta\circ\alpha)^*D^X-(\mathcal D-a\tilde{S}_1))_\xi$ is in the kernel of
\[\xymatrix{ \Pic((\tilde{X}\times_{\mathbb P^1}\Gamma)_\xi) \ar[r] & \NS((\tilde{X}\times_{\mathbb P^1}\Gamma)_\xi) \ar[r] & \NS((\tilde{X}\times_{\mathbb P^1}\Gamma)_{\overline{\xi}}) \ar[r]^-{\mathrm{sp}_{\tilde{s}}} & \NS((\tilde{X}\times_{\mathbb P^1}\Gamma)_{\tilde{s}})}.\]
The specialization map on N\'eron--Severi groups is injective up to torsion because intersection numbers specialize, so some multiple is algebraically trivial on the very general fiber. That is, $lr(D^X|_{T}-D_g)\in\Pic^0(T)$ for some $l\in\mathbb Z_{>0}$.

If $X$ is singular, let $\psi\colon X^+\to X$ be the alteration defined in Lemma~\ref{lem:partial_alteration}.
We have a divisor $(\beta\circ\alpha)^*D^X - (\mathcal D -a \tilde{S}_1)$ on $\tilde{X}\times_{\mathbb P^1}\Gamma$, and we now claim that the pullback $((\beta\circ\alpha)^*D^X - (\mathcal D -a \tilde{S}_1))\times_X X^+$ to $\tilde{X}\times_{\mathbb P^1}\Gamma\times_X X^+$ is $\mathbb Q$-Cartier near the reducible fiber $(\tilde{S}_1+\tilde{S}_2)\times_X X^+$.
For the $D^X$ part, we have that $(\beta\circ\alpha)^*D^X\times_X X^+$ is $\mathbb Q$-Cartier near $(\tilde{S}_1+\tilde{S}_2)\times_X X^+$ because $X^+$ is $\mathbb Q$-factorial except over $\{q_j\}$, and $S_1,S_2$ do not contain any of the points $q_j$ by generality.
Next, the divisor $\mathcal D -a \tilde{S}_1$ on $\tilde{X}\times_{\mathbb P^1}\Gamma$ is $\mathbb Q$-Cartier along $\tilde{S}_1\cap\tilde{S}_2$ by construction, and away from $S_1\cap S_2$ the base change $(S_1+S_2)\times_X X^+\to S_1+S_2$ becomes $\mathbb Q$-factorial by property~\eqref{item:partial_alteration_L^b}. So $\mathcal D -a \tilde{S}_1$ is also $\mathbb Q$-Cartier after base change.

$((\beta\circ\alpha)^*D^X - (\mathcal D -a \tilde{S}_1))\times_X X^+$ is also $\mathbb Q$-Cartier on the geometric generic fiber $(\tilde{X}\times_{\mathbb P^1}\Gamma\times_X X^+)_{\overline{\xi}}$ of $\tilde{X}\times_{\mathbb P^1}\Gamma\times_X X^+ \to \Gamma$.
The exceptional divisors of $(\tilde{S}_1+\tilde{S}_2)\times_X X^+\to \tilde{S}_1+\tilde{S}_2$ are specializations of the exceptional divisors of $(\tilde{X}\times_{\mathbb P^1}\Gamma\times_X X^+)_{\overline{\xi}} \to (\tilde{X}\times_{\mathbb P^1}\Gamma)_{\overline{\xi}}$, so there is some (Cartier) divisor $E$ with exceptional support on the geometric generic fiber such that $r(((\beta\circ\alpha)^*D^X - (\mathcal D -a \tilde{S}_1))\times_X X^+)_{\overline{\xi}}+E$ becomes trivial after specializing to the fiber $(\tilde{S}_1+\tilde{S}_2)\times_X X^+$ over $\tilde{s}$.
Then $r(((\beta\circ\alpha)^*D^X - (\mathcal D -a \tilde{S}_1))\times_X X^+)_{\overline{\xi}}+E$ is numerically trivial, so some multiple $lr (((\beta\circ\alpha)^*D^X - (\mathcal D -a \tilde{S}_1))\times_X X^+ +E)|_{\tilde{T}_g\times_X X^+}$ is algebraically trivial on the very general fiber. Pushing forward we get $\beta_*\alpha_*(lr((\beta\circ\alpha)^*D^X - (\mathcal D -a \tilde{S}_1))|_{\tilde{T}_g}) = D^X|_{T}-D_g \in \Cl^0(T)$.
\end{proof}

\begin{lem}\label{lem:local_computation_cl}
In the proof of Proposition~\ref{prop:surj_for_L^2_to_surfaces},
$\tilde{X}\times_{\mathbb P^1} \Gamma$ is locally $\mathbb Q$-factorial over $C\setminus\{p_i\}$ with Cartier index equal to the ramification index $r$ of $\Gamma\to\mathbb P^1$ at $\tilde{s}$.
\end{lem}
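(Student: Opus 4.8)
The plan is to reduce the statement to an explicit computation in the completed local ring of $\tilde{X}\times_{\mathbb P^1}\Gamma$ at a point $x\in C\setminus\{p_i\}$, where I will recognize the singularity as (the germ of) an $A_{r-1}$ surface singularity times a smooth curve.

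First I would record the local geometry near such an $x$. Since $C=S_1\cap S_2$ lies in the regular locus of $X$, the threefold $X$ is smooth at $x$; and since $x\notin T=(g=0)$ (because $x$ is not one of the points $p_i=C\cap T$), the section $g$ is a unit near $x$. In particular the base locus $(f_1f_2=g=0)$ of the pencil is empty near $x$, so $\beta\colon\tilde{X}\to X$ is an isomorphism there. By generality of $S_1,S_2$ the two surfaces meet transversally along $C$, so the local equations $f_1,f_2$ of $S_1,S_2$ extend to a regular system of parameters $f_1,f_2,w$ at $x$. In the chart $s_0=1$ with coordinate $t=s_1/s_0$ the projection $\tilde{X}\to\mathbb P^1$ is given by $t=-f_1f_2/g$, and choosing a local coordinate $\tau$ on $\Gamma$ at $\tilde{s}$ with $t=\tau^r$, I obtain the local equation
\[ \tilde{X}\times_{\mathbb P^1}\Gamma=(f_1f_2+g\tau^r=0)\subset X\times\Gamma \]
near $x$, with $g$ a unit.

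Next I would identify the singularity. Passing to the completion $\widehat{\mathcal O}_{X,x}=k[[f_1,f_2,w]]$ and using that $k$ is algebraically closed, Hensel's lemma provides an $r$-th root $h$ of the unit $g$; replacing $\tau$ by $h\tau$ turns the equation into $f_1f_2+\tau^r=0$. Thus
\[ \widehat{\mathcal O}_{\tilde{X}\times_{\mathbb P^1}\Gamma,\,x}\cong k[[f_1,f_2,w,\tau]]/(f_1f_2+\tau^r)\cong k[[w]]\,\widehat{\otimes}\,k[[f_1,f_2,\tau]]/(f_1f_2+\tau^r), \]
the product of an $A_{r-1}$ surface singularity with a smooth formal curve. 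This is normal, so $\tilde{X}\times_{\mathbb P^1}\Gamma$ is normal near $C\setminus\{p_i\}$ and its local class group there is well defined. I would then compute it directly: on this model the relation $f_1f_2=-g\tau^r$ gives $\mathrm{div}(\tau)=\tilde{S}_1+\tilde{S}_2$ and, since $f_1$ vanishes to order $r$ along $\tilde{S}_1=(f_1=\tau=0)$ (as $f_2$ is a unit there), $\mathrm{div}(f_1)=r\tilde{S}_1$ and likewise $\mathrm{div}(f_2)=r\tilde{S}_2$. Hence $r\tilde{S}_1$ is principal, while the standard fact that the divisor class group of the $A_{r-1}$ singularity is $\mathbb Z/r\mathbb Z$ (generated by a ruling, and unchanged by adjoining the transverse parameter $w$) shows that $[\tilde{S}_1]$ has order exactly $r$. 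Therefore every Weil divisor becomes Cartier after multiplying by $r$ but by no smaller positive integer, i.e. $\tilde{X}\times_{\mathbb P^1}\Gamma$ is locally $\mathbb Q$-factorial over $C\setminus\{p_i\}$ with Cartier index $r$.

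The genuinely routine part is the computation $\Cl(A_{r-1})=\mathbb Z/r\mathbb Z$. The two points I would take care to justify are (i) the transversality of $S_1$ and $S_2$ along $C\setminus\{p_i\}$, which makes $f_1,f_2$ honest local parameters and is what forces the $f_1f_2$ shape of the equation, and (ii) the absorption of the unit $g$ via a formal $r$-th root; this is the step that cleanly pins the singularity down to the standard $A_{r-1}$ model and guarantees that the Cartier index is \emph{exactly} $r$ rather than merely a divisor of $r$. I expect (ii) to be the main conceptual obstacle, since it is where the ramification index $r$ of $\Gamma\to\mathbb P^1$ enters and gets converted into the order of the local class group.
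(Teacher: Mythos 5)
Your overall strategy coincides with the paper's: both proofs pass to the completed local ring at a point $x\in C\setminus\{p_i\}$, use that $X$, $S_1$, $S_2$ are smooth there with $S_1,S_2$ meeting transversally and $g$ a unit, identify the resulting hypersurface singularity $y_1y_2 = (\mathrm{unit})\cdot\tau^r$ as an $A_{r-1}$-singularity suspended by a smooth parameter, and conclude from $\Cl(A_{r-1})=\mathbb Z/r\mathbb Z$ that the Cartier index is exactly $r$. Your computation of $\mathrm{div}(\tau)$ and $\mathrm{div}(f_1)$ in the normalized model is correct and in fact makes the "index exactly $r$" claim more explicit than the paper does.

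However, there is a genuine gap in how you normalize away the units, and it matters precisely because this paper works in arbitrary characteristic. You remove units twice by extracting $r$-th roots: first choosing a coordinate $\tau$ on $\Gamma$ with $t=\tau^r$ exactly, and then invoking Hensel's lemma to find $h$ with $h^r=g$. Both steps fail when $\chara k = p$ divides $r$: for the polynomial $x^r-g$ the derivative $rx^{r-1}$ vanishes identically, so Hensel's lemma does not apply, and roots genuinely need not exist --- e.g.\ the unit $1+w$ has no $p$-th root in $k[[w]]$, since $p$-th powers in $k[[w]]$ are exactly the series in $w^p$. The same obstruction blocks the reparametrization $t=\tau^r$ when $\Gamma\to\mathbb P^1$ is wildly ramified at $\tilde s$; and since $\Gamma$ is an auxiliary curve produced from the Chow variety, you have no control over its ramification indices, so the case $p\mid r$ cannot be excluded. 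The repair is easy and is what the paper does: instead of extracting roots, divide the equation $y_1y_2 - u\tau^r q=0$ by the unit $uq$ and absorb it multiplicatively into one coordinate, replacing $y_1$ by $(uq)^{-1}y_1$; this is a legitimate change of coordinates in the completed local ring in every characteristic and yields the standard model $y_1'y_2=\tau^r$ directly. With that substitution your argument goes through verbatim; as written, it proves the lemma only when $p\nmid r$ (in particular in characteristic $0$).
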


\begin{proof}

Note that this computation applies to any collection of Cartier divisors $S_1\in|\mathcal L_1|, S_2\in|\mathcal L_2|$, and $T\in |\mathcal L_1\otimes\mathcal L_2|$ as long as each one is smooth at $x$.

Let $x\in C=S_1\cap S_2\subset X$ be a closed point. Then $x$ is in the regular locus of $X$, $S_1$, and $S_2$, so we may choose local parameters $y_1,y_2,y_3$ at $x$ such that $S_i$ is locally defined by $y_i$ at $x$. Let $q$ be a local equation for $T$ at $x$; then $x\in T$ if and only if $q$ is in the maximal ideal of $k[[y_1,y_2,y_3]]$, i.e. not a unit. The total space of the pencil is locally $\widehat{\mathcal O}_{\tilde{X},x}\cong k[[y_1,y_2,y_3,s]]/(y_1y_2-sq)$ where $s$ is a local parameter for $\mathcal O_{\mathbb P^1,s}$.

Let $t$ be a local parameter at $\mathcal O_{\Gamma,\tilde{s}}$. Then at a point $\tilde{x}$ in $\tilde{X}\times_{\mathbb P^1}\Gamma$ mapping to $x$ \[ \widehat{\mathcal O}_{\tilde{X}\times_{\mathbb P^1}\Gamma,\tilde{x}}\cong k[[y_1,y_2,y_3,t]]/(y_1y_2-ut^rq) \] for some unit $u$. If $x\not\in T$, then $\widehat{\mathcal O}_{\tilde{X}\times_{\mathbb P^1} \Gamma,\tilde{x}}\cong k[[y_1,y_2,y_3,t]]/((u'q)^{-1}y_1y_2-t^r)$ is an $A_{r-1}$-singularity, so its class group is $\mathbb Z/r\mathbb Z$.

If $x\in T$ then $q$ is in the maximal ideal of $k[[y_1,y_2,y_3]]$. Since $T\in|\mathcal L^{\otimes 2}|$ is very general, $x$ is not in its singular locus and $Y$ not cut out by any of the same equations as $S_1+S_2$, so we may assume that our coordinates were chosen so that $q=y_3$. Then $\widehat{\mathcal O}_{\tilde{X}\times_{\mathbb P^1} \Gamma,\tilde{x}}\cong k[[y_1,y_2,y_3,t]]/(y_1y_2-ut^ry_3)\cong k[[y_1,y_2,y_n,t]]/(y_1y_2-(uy_3)t^r)$. This ring has trivial Picard group and its class group is $\mathbb Z=\langle (y_1=t=0)\rangle=\langle -(y_2=t=0)\rangle$.
\end{proof}

Over uncountable fields, Noether--Lefschetz for even multiples $\geq 4$ immediately follows from Theorem~\ref{prop:injectivity} and Proposition~\ref{prop:surj_for_L^2_to_surfaces}: if $n=3$ then the restriction map $\Cl(X)\to\Cl(T)$ is injective and is surjective up to torsion for surfaces $T$ outside of a countable union of closed subvarieties of $|\mathcal L^{\otimes 2}|$.

To get surjectivity for odd multiples in dimension 3, we consider a different type of specialization. Instead of degenerating to a reducible member of $|\mathcal L|+|\mathcal L|$, we consider a reducible member of $|\mathcal L_A^{\otimes 2}|+|\mathcal L_B|$ and apply Proposition~\ref{prop:surj_for_L^2_to_surfaces} to lift divisors from the $|\mathcal L_A^{\otimes 2}|$ component. To conclude with the same specialization argument as before, we must ensure that our divisors are $\mathbb Q$-Cartier on the reducible member (possibly after base change by an alteration of $X$), so we assume $\mathcal L_B$ satisfies the conditions of Lemmas~\ref{lem:bertini_normal} and \ref{lem:cokernel_to_Jac(C)}. This assumption is why the $d=5$ case will be missing from Corollary~\ref{cor:surj_for_2A+B_surface} below.

\begin{prop}\label{prop:surj_for_2A+B}
Let $X$ be a normal projective threefold over an algebraically closed field $\mathbb K$ of infinite transcendence degree, and let $k_0\subset\mathbb K$ be a field of definition for $X$.
Let $\mathcal L_A=\mathcal L_{A,0}^{\otimes d_A}$ where $\mathcal L_{A,0}$ is a very ample line bundle and $d_A\geq 2$ is an integer. Let $\mathcal L_B$ be a very ample line bundle such that $\mathcal L_B=\mathcal L_{B,0}^{\otimes d_B}$ for $d_B\geq 2$ and $\mathcal L_{B,0}$ ample and basepoint-free.
Assume that one of the following holds:
\begin{enumerate}[label=(\alph*)]
\item
$X$ is smooth,
\item
$\chara\mathbb K=0$, or
\item
$\mathcal L_A^{\otimes 2}\otimes\mathcal L_B \cong \mathcal M^{\otimes d}$ for some very ample line bundle $\mathcal M$ and integer $d$.
\end{enumerate}
Let $S'_1, S'_2\in|\mathcal L_A|$ be very general over $k_0$, pick a pencil in $|\mathcal L_A^{\otimes 2}|$ through $S'_1+S'_2$ that is very general over $\overline{k^{S'_1}}$ and $\overline{k^{S'_2}}$, and let $S_{2A}\in |\mathcal L_A^{\otimes 2}|$ and $S_B\in|\mathcal L_B|$ be very general over the algebraic closure of the compositum of the fields of definition for $S'_1,S'_2$, and the pencil.
Pick a pencil $|\Lambda|\subset|\mathcal L_A^{\otimes 2}\otimes\mathcal L_B|$ through $S_{2A}+S_B$ very general over the algebraic closure of the compositum of $k^{S_{2A}}$ and $k^{S_B}$.

If $T\in|\mathcal L_A^{\otimes 2}\otimes\mathcal L_B|$ is very general over the algebraic closure of the compositum of the above fields, then the restriction map
\[\xymatrix{ \Cl(X) \ar[r] & \Cl(T)} \]
is surjective up to torsion.
\end{prop}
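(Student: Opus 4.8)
The plan is to adapt the degeneration argument of Proposition~\ref{prop:surj_for_L^2_to_surfaces}, now specializing a very general $T\in|\mathcal L_A^2\otimes\mathcal L_B|$ to the reducible member $S_{2A}+S_B$ and lifting divisors from the $|\mathcal L_A^2|$-component via Proposition~\ref{prop:surj_for_L^2_to_surfaces} itself. First I would form the pencil $|\Lambda|\subset|\mathcal L_A^2\otimes\mathcal L_B|$ spanned by $S_{2A}+S_B$ and $T$, let $\tilde X\to\mathbb P^1$ be its total space (the blowup of $X$ along the base locus) with $\beta\colon\tilde X\to X$, and set $C=S_{2A}\cap S_B$, a smooth curve in the regular locus of $X$ since $S_{2A},S_B$ are very general (Lemma~\ref{lem:bertini_normal}). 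Given an integral divisor $D_g$ on $T$, I would use the Chow variety $\mathrm{WDiv}(\tilde X/\mathbb P^1)$ exactly as in Proposition~\ref{prop:surj_for_L^2_to_surfaces}: since $T$ is very general over the compositum of all the auxiliary fields of definition, $[D_g]$ lies on a component $\Omega$ dominating $\mathbb P^1$, and after a base change $\Gamma\to\mathbb P^1$ through $[D_g]$ I obtain a divisor $\mathcal D$ on $\tilde X\times_{\mathbb P^1}\Gamma$ (with $\alpha\colon\tilde X\times_{\mathbb P^1}\Gamma\to\tilde X$) restricting to $D_g$ over a point $\tilde g$.

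Next I would specialize to the reducible fiber over $\tilde s$, whose components $\tilde S_{2A},\tilde S_B$ meet along $C$ and carry divisor classes $D_1^\circ,D_2^\circ$. The remark following Lemma~\ref{lem:local_computation_cl} applies verbatim with $\mathcal L_1=\mathcal L_A^2$ and $\mathcal L_2=\mathcal L_B$, so $\tilde X\times_{\mathbb P^1}\Gamma$ is locally $\mathbb Q$-factorial of Cartier index $r$ along $C$ away from the points $\{p_j\}=C\cap T$, where the local class group is $\mathbb Z=\langle(\tilde S_{2A})_{p_j}\rangle$; hence $D_1^\circ|_C-D_2^\circ|_C$ is supported on $\{p_j\}$. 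I would then apply Voisin's Lemma~\ref{lem:kol_sections_indep_conj} to the very ample line bundle $(\mathcal L_A^2\otimes\mathcal L_B)|_C$, with $A\subset\mathbf{Pic}^0(C)$ and $G\subset\Pic(C)$ generated by the images of $\Cl(\tilde S_{2A})$ and $\Cl(\tilde S_B)$ in $\Pic(C)$; the hypothesis that $\mathbf{Cl}^0(X)\to\mathbf{Jac}(C)$ has positive-dimensional cokernel is supplied by the genus estimate of Lemma~\ref{lem:cokernel_to_Jac(C)} together with the assumption $\mathcal L_B=\mathcal L_{B,0}^{d_B}$ with $d_B\geq 2$. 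This gives $D_1^\circ|_C-D_2^\circ|_C=a\sum_j p_j$ for some $a\in\mathbb Z$, so $\mathcal D-a\tilde S_{2A}$ is $\mathbb Q$-Cartier along $C$ and determines divisors $D_1$ on $S_{2A}$ and $D_2$ on $S_B$ whose restrictions to $C$ agree.

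The crucial new step, replacing the appeal to Lemma~\ref{lem:surj_on_reducible_member}, is to lift $D_1$ from the component $S_{2A}\in|\mathcal L_A^2|$. Since $S_{2A}$ is very general over the relevant field, Proposition~\ref{prop:surj_for_L^2_to_surfaces} (applied with $\mathcal L=\mathcal L_A$) gives $D^X\in\Cl(X)$ with $D^X|_{S_{2A}}\sim D_1$ modulo torsion and $\Cl^0$. As $C\subset X^{\mathrm{reg}}$, the divisor $D^X$ is Cartier near $C$, so $D^X|_{S_{2A}}|_C=D^X|_{S_B}|_C$; combining $D^X|_C\sim D_1|_C\sim D_2|_C$ shows that $D^X|_{S_B}$ and $D_2$ restrict to the same class on $C$. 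Because $S_{2A}$ is chosen independently of $S_B$, the curve $C$ is very general on $S_B$, and since $\mathbb K\neq\overline{\mathbb F}_p$ the injectivity of the restriction to $C$ (Corollary~\ref{cor:cl(x)_to_jac_generic} with the specialization of Lemma~\ref{lem:jac_generic_to_vg}, applied to the surface $S_B$) forces $D^X|_{S_B}\sim D_2$ modulo torsion and $\Cl^0$. Thus $(\beta\circ\alpha)^*D^X-(\mathcal D-a\tilde S_{2A})$ restricts trivially, modulo torsion and $\Cl^0$, to the whole reducible fiber.

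Finally I would run the specialization-of-N\'eron--Severi argument of Proposition~\ref{prop:surj_for_L^2_to_surfaces}. When $X$ is smooth the generic fiber of $\tilde X\times_{\mathbb P^1}\Gamma\to\Gamma$ is a smooth surface, the difference above is Cartier there and $\mathbb Q$-Cartier on the reducible fiber, and injectivity of specialization of $\NS$ up to torsion yields $lr(D^X|_T-D_g)\in\Pic^0(T)$; since $\Cl^0(X)\cong\Cl^0(T)$ by Corollary~\ref{prop:cl^0_iso}, the cokernel of $\Cl(X)\to\Cl(T)$ is torsion. In the singular case I would base change by the alteration $\psi\colon X^+\to X$ of Lemma~\ref{lem:partial_alteration} to make the difference $\mathbb Q$-Cartier near the reducible fiber, which is where the three hypotheses enter: Lemma~\ref{lem:partial_alteration} requires either $X$ smooth, $\chara\mathbb K=0$, or that the pencil lie in a linear system $|\mathcal M^d|$, i.e.\ $\mathcal L_A^2\otimes\mathcal L_B\cong\mathcal M^d$. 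I expect the main obstacle to be exactly this $\mathbb Q$-Cartierness bookkeeping on the alteration, together with verifying $D^X|_{S_B}\sim D_2$, since the two components now lie in different linear systems and the clean fiber-product description of Lemma~\ref{lem:surj_on_reducible_member} is unavailable.
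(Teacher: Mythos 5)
Your proposal is correct and follows the paper's own proof essentially step for step: the pencil through $S_{2A}+S_B$ with very general $T$, the Chow-variety/base-change construction of $\mathcal D$, the local computation of Lemma~\ref{lem:local_computation_cl}, Voisin's Lemma~\ref{lem:kol_sections_indep_conj} (fed by Lemma~\ref{lem:cokernel_to_Jac(C)} via the hypothesis on $\mathcal L_B$) to make $\mathcal D-a\tilde S_{2A}$ $\mathbb Q$-Cartier along $C$, the lift $D^X$ from the $|\mathcal L_A^2|$-component via Proposition~\ref{prop:surj_for_L^2_to_surfaces}, and the N\'eron--Severi specialization argument with Lemma~\ref{lem:partial_alteration} supplying the three hypotheses in the singular case. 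The one place you go beyond the paper is in making explicit the identification of $D^X|_{S_B}$ with $D_2$ (which the paper dispatches with a terse appeal to Proposition~\ref{prop:injectivity}) by restricting to the very general curve $C\subset S_B$ and invoking injectivity of $\Cl(S_B)\to\Pic(C)$; this is the right argument, with the small caveat that since a very general point need not avoid a field-locally thin set, the correct citation in this infinite-transcendence-degree setting is the remark following Lemma~\ref{lem:jac_generic_to_vg} (countability of sections of the traceless part) rather than the thin-set statement of that lemma itself.
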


\begin{proof}
Let $C=S_{2A}\cap S_B$, and let $\tilde{X}$ be the total space of the pencil $|\Lambda|$. Let $s$ and $g$ denote the points on $|\Lambda|=\mathbb P^1$ corresponding to $S_{2A}+S_B$ and $T$, respectively. By our choice of $T$, $g$ is not in the image of any component of the Chow variety $\mathrm{WDiv}(\tilde{X}/\mathbb P^1)$ not dominating the base.

Let $D_g$ be an integral divisor on $T$. As in the proof of Proposition~\ref{prop:surj_for_L^2_to_surfaces}, $\mathrm{WDiv}(\tilde{X}/\mathbb P^1)\times_{\mathbb P^1}\Gamma\to\Gamma$ admits a section after taking some finite base change $\Gamma\to\mathbb P^1$. So we have a divisor $\mathcal D$ on $\tilde{X}\times_{\mathbb P^1}\Gamma$ that restricts to $D_g$ on the fiber over a point $\tilde{g}\in\Gamma$ mapping to $g$.

Let $\tilde{s}\in\Gamma$ be a point mapping to $s$.
Then $(\tilde{X}\times_{\mathbb P^1}\Gamma)_{\tilde{s}}\xrightarrow{\cong} \tilde{X}_s \cong S_{2A}+S_B$, and we denote the components of the reducible fiber $(\tilde{X}\times_{\mathbb P^1}\Gamma)_{\tilde{s}}$ by $\tilde{S}_{2A}$ and $\tilde{S}_B$.
As in the proof of Proposition~\ref{prop:surj_for_L^2_to_surfaces}, there is an integer $a$ such that $\mathcal D+a\tilde{S}_B$ is $\mathbb Q$-Cartier along $\tilde{S}_{2A}\cap\tilde{S}_B$, obtained by applying Proposition~\ref{lem:kol_sections_indep_conj} to the very ample line bundle $(\mathcal L_A^{\otimes 2}\otimes\mathcal L_B)|_C$ on the curve $C$ and to the subgroups $A\subset\Pic^0(C)$ and ${G}\subset\Pic(C)$ generated by the images of $\Cl^0$ and by $(\mathcal L_A^{\otimes 2}\otimes\mathcal L_B)|_C$ and the images of a set of representatives of generators of $\Clns$, respectively, under the restrictions $\Cl(S_{2A})\to\Pic(C)$ and $\Cl(S_B)\to\Pic(C)$. We use the assumptions on $\mathcal L_B$ here to apply Lemma~\ref{lem:cokernel_to_Jac(C)} and ensure that $A\subsetneq\Pic^0(C)$.

The divisor $\mathcal D+a\tilde{S}_B$ on the family defines a unique Weil divisor class $D_{2A}\in\Cl(\tilde{S}_{2A})$.
By Theorem~\ref{prop:injectivity} and Proposition~\ref{prop:surj_for_L^2_to_surfaces} the restriction $\Cl(X)\to\Cl(S_{2A})$ is an injection with torsion cokernel, so for some $c\in\mathbb Z_{\geq 1}$ there is a unique divisor class $D^X$ on $X$ such that $D^X|_{S_{2A}}\sim c\beta_*\alpha_* D_{2A}$.
By Theorem~\ref{prop:injectivity} its restriction to $S_B$ agrees with the divisor defined by $\mathcal D+a\tilde{S}_B$.
Let $(\beta\circ\alpha)^*D^X$ denote $(D^X\times\Gamma)|_{\tilde{X}\times_{\mathbb P^1}\Gamma}$.

The divisor $(\beta\circ\alpha)^*D^X-c(\mathcal D+a\tilde{S}_B)$ on $\tilde{X}\times_{\mathbb P^1}\Gamma$ is trivial on the reducible fiber $\tilde{S}_{2A}+\tilde{S}_B$.
If $X$ is smooth or if $\chara\mathbb K=0$, then the same specialization argument in the proof of Proposition~\ref{prop:surj_for_L^2_to_surfaces} implies a multiple of this divisor is algebraically trivial when restricted to the fiber over $\tilde{g}$, and using the same argument we conclude that $\Cl(X)\to\Cl(T)$ is surjective up to torsion.

If $X$ is singular and $\chara\mathbb K=p>0$, construct a purely inseparable alteration $X^+\to X$ as in Lemma~\ref{lem:partial_alteration} using a general pencil in the very ample linear system $|\mathcal M|$.
Then $((S_{2A}+S_B)\setminus(S_{2A}\cap S_B))\times_X X^+$ is $\mathbb Q$-factorial, and since $(\beta\circ\alpha)^*D^X$ and $\mathcal D+a\tilde{S}_B$ are $\mathbb Q$-Cartier along $\tilde{S}_{2A}\cap\tilde{S}_B$ by generality and by choice of $a$, respectively, the argument of Proposition~\ref{prop:surj_for_L^2_to_surfaces} goes through. So we again conclude surjectivity of $\Cl(X)\to\Cl(T)$ up to torsion.
\end{proof}

\subsection{Surjectivity over fields $\neq\overline{\mathbb F}_p$}\label{section:other_fields}

Following a suggestion of Bjorn Poonen,
we will now apply a result of Andr\'e in characteristic 0 and Ambrosi and Christensen in characteristic $p$ to show that if surjectivity holds over fields of infinite transcendence degree, then it also holds for any field $\neq\overline{\mathbb F}_p$. The result we will use is for the Picard rank of smooth families, so we will apply it on a resolution of singularities or a suitable alteration. We first recall their results:

\begin{thm}[{\cites[Th\'eor\`eme 5.2]{andre83}[Corollary 1.7.1.5]{ambrosi18}[Theorem 1.0.1]{christensen18}}]\label{thm:andre-ambrosi-christensen}
Let $k\neq\overline{\mathbb F}_p$ be an algebraically closed field, $B$ a $k$-scheme of finite type, and $\mathcal T\to B$ a smooth morphism. Let $\eta$ be a generic point of $B$ and $\mathcal T_{\overline{\eta}}$ the corresponding geometric generic fiber. Then there exists $b\in B(k)$ such that $\rho(\mathcal T_{\overline{\eta}})=\rho(T_b)$.
\end{thm}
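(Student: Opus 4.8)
The plan is to deduce the statement from three ingredients: the elementary inequality $\rho(\mathcal T_{\overline\eta})\le\rho(T_b)$, valid for \emph{every} closed point $b$; the assertion that the locus where this inequality is strict is governed by a thin set once the family is defined over a finitely generated field; and Serre's theorem (recalled in Definition~\ref{defn:thin_set}) that $B(k_1)$ is not thin for a finitely generated $k_1\not\subset\overline{\mathbb F}_p$. We may assume $\dim B\ge 1$, for otherwise $\eta$ is itself a closed point and there is nothing to prove, and we may replace $B$ by an irreducible component through $\eta$.

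First I would reduce to a finitely generated base field. Spreading out, there is a subfield $k_1\subset k$ finitely generated over the prime field, together with a smooth morphism $\mathcal T_1\to B_1$ over $k_1$ whose base change to $k$ recovers $\mathcal T\to B$. Because $k\neq\overline{\mathbb F}_p$, in characteristic $p$ the field $k$ has positive transcendence degree over $\mathbb F_p$, so $k_1$ can be chosen to contain a transcendental element and hence to satisfy $k_1\not\subset\overline{\mathbb F}_p$; in characteristic $0$ the inclusion $\mathbb Q\subset k_1$ already guarantees this. Since the Picard number of a geometric generic fiber is insensitive to the algebraically closed field of definition (cf.\ Lemma~\ref{lem:clns_field_extn}), we have $\rho(\mathcal T_{\overline\eta})=\rho(\mathcal T_{1,\overline{\eta_1}})$, and it suffices to produce a point $b\in B_1(k_1)\subset B(k)$ with $\rho(T_{1,b})=\rho(\mathcal T_{1,\overline{\eta_1}})$.

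For the inequality $\rho(\mathcal T_{1,\overline{\eta_1}})\le\rho(T_{1,b})$ I would invoke injectivity up to torsion of the specialization map on N\'eron--Severi groups: intersection numbers specialize, and smooth proper base change gives a specialization isomorphism on $H^2(-,\mathbb Q_\ell)$ under which algebraic classes map to algebraic classes, exactly as used in the proof of Proposition~\ref{prop:surj_for_L^2_to_surfaces}. Thus the whole problem is to avoid the jumping locus $\{\,b: \rho(T_{1,b})>\rho(\mathcal T_{1,\overline{\eta_1}})\,\}$, which by a standard spreading-out of divisor classes over relative Hilbert (or Picard) schemes is a countable union of proper closed $k_1$-subvarieties of $B_1$. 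The heart of the matter is to show that, after a closed embedding $B_1\hookrightarrow\mathbb P^m_{k_1}$, this jumping locus is contained in a \emph{thin} subset of $B_1(k_1)$. I would study the weight-two sheaf $R^2 f_*\mathbb Q_\ell(1)$ together with its monodromy/Galois action and argue that fibers carrying extra algebraic (Tate) classes form a thin set of type (2) in the sense of Definition~\ref{defn:thin_set}: an extra class forces $b$ into the image of a finite cover admitting no rational section. Granting this, Serre's theorem produces a point $b\in B_1(k_1)$ outside the thin jumping locus, and with the easy inequality this yields $\rho(T_b)=\rho(\mathcal T_{\overline\eta})$.

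The main obstacle is precisely the thinness of the jumping locus, and it is for this reason that the statement rests on the deep work of Andr\'e, Ambrosi, and Christensen rather than being proved directly. In characteristic $0$ one realizes the Mumford--Tate group via Andr\'e's formalism of motivated cycles as a motivic Galois group and shows that its generic value is attained away from a thin set; the subtle point is that $\overline{\mathbb Q}$ is countable, so one genuinely needs the thin-set control rather than a mere dimension count. In characteristic $p$ the essential difficulty is the absence of Hodge theory, which must be replaced by the formalism of $\ell$-adic companions and the available cases of the Tate conjecture together with independence-of-$\ell$ and \v{C}ebotarev-type inputs; this is the content of the theorems of Ambrosi and Christensen, and it is exactly here that the hypothesis $k\neq\overline{\mathbb F}_p$ enters in an essential way, since the final step relies on $B_1(k_1)$ failing to be thin.
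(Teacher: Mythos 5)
The paper does not prove this statement at all: it is imported verbatim from Andr\'e (characteristic $0$) and Ambrosi--Christensen (characteristic $p$), which is why the theorem carries those three citations in its header. The paper's only contribution surrounding it is Corollary~\ref{cor:andre_for_surfaces}, where the theorem is applied as a black box to a simultaneous resolution of a family of normal surfaces. So there is no internal proof to compare your attempt against; the expected ``proof'' here is simply the references.

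Read as a proof, your proposal has a genuine gap, which you yourself flag: the thinness of the jumping locus is not an auxiliary step one may grant --- it \emph{is} the theorem of Andr\'e, Ambrosi, and Christensen, so deferring it to those works makes the argument circular rather than a proof. Beyond that, there is a concrete technical problem in your reduction. You produce $b\in B_1(k_1)$ by applying Serre's theorem after choosing an embedding $B_1\hookrightarrow\mathbb P^m_{k_1}$, but Serre's theorem (Definition~\ref{defn:thin_set}) asserts that $\mathbb P^m(k_1)$ is not thin; for a proper subvariety $B_1\subsetneq\mathbb P^m_{k_1}$ the set $B_1(k_1)$ is itself a type~1 thin subset of $\mathbb P^m(k_1)$ --- indeed it may well be empty --- so knowing that the jumping locus lies in a thin subset of $B_1(k_1)$ yields nothing. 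Hilbert irreducibility applies to $\mathbb P^m$ (or to varieties of Hilbert type), not to an arbitrary finite-type base, and this is precisely why the cited proofs must do extra work: e.g., after shrinking and Noether normalization one works with closed points of $B_1$ whose residue fields are finite over $k_1$, and these induce the desired $k$-points of $B$ once one base changes back to the algebraically closed field $k$. The surrounding scaffolding you set up (spreading out to $k_1\not\subset\overline{\mathbb F}_p$, invariance of the geometric Picard number under extension of algebraically closed fields, and the specialization inequality $\rho(\mathcal T_{\overline\eta})\le\rho(T_b)$) is correct and does match the architecture of the cited arguments, but it is the part that was never in doubt.
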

Using \cite[Proposition 3.6]{mp12}, this means that for such $b$ the specialization map $\NS(\mathcal T_{\overline{\eta}})\to\NS(T_b)$ is an isomorphism if $\chara k=0$ and an isomorphism up to $p$-power torsion if $\chara k=p>0$.
In the case of a family of surfaces, isomorphism up to torsion can be argued directly as follows.

\begin{cor}\label{cor:andre_for_surfaces}
Let $k\neq\overline{\mathbb F}_p$ be an algebraically closed field, $B$ a variety over $k$ with generic point $\eta$, and $\mathcal T\to B$ a family of normal surfaces. Then there exists $b\in B(k)$ such that the specialization map $\Clns(\mathcal T_{\overline{\eta}})\to\Clns(T_b)$ is an isomorphism up to torsion.
\end{cor}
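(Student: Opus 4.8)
The plan is to reduce to the smooth case handled by Theorem~\ref{thm:andre-ambrosi-christensen} by resolving the fibers of $\mathcal T\to B$---which is possible for a family of surfaces in every characteristic---and then to transfer the conclusion back to the normal fibers using Lemma~\ref{lem:cl_birational}. First I would arrange that a resolution of the generic fiber is defined over $K=k(B)$. This is harmless after replacing $B$ by a variety $B'$ equipped with a dominant generically finite morphism $B'\to B$: the geometric generic fibers of $\mathcal T\times_B B'\to B'$ and of $\mathcal T\to B$ coincide, and since $k$ is algebraically closed any closed point $b'\in B'(k)$ maps to a closed point $b\in B(k)$ with $(\mathcal T\times_B B')_{b'}\cong T_b$. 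Having resolved the generic fiber, I would spread out to obtain a dense open $B^\circ\subset B$, a smooth projective morphism $\mathcal T^+\to B^\circ$, and a proper birational $B^\circ$-morphism $\pi\colon\mathcal T^+\to\mathcal T|_{B^\circ}$ that restricts to a resolution $\mathcal T^+_b\to T_b$ on each fiber. After further shrinking $B^\circ$ I may assume that the exceptional divisor of $\pi$ is a sum $E=\sum_i E_i$ of prime divisors, each flat over $B^\circ$ with irreducible fibers, so that the classes $[E_{i,b}]$ are exactly the exceptional classes of $\mathcal T^+_b\to T_b$ and are the specializations of the $[E_{i,\overline{\eta}}]$.

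Next I would apply Theorem~\ref{thm:andre-ambrosi-christensen} to the smooth family $\mathcal T^+\to B^\circ$ to produce a closed point $b$ with $\rho(\mathcal T^+_{\overline{\eta}})=\rho(\mathcal T^+_b)$. Since the specialization map on N\'eron--Severi groups is always injective up to torsion (intersection numbers are preserved), equality of Picard ranks upgrades this to an isomorphism $\NS(\mathcal T^+_{\overline{\eta}})\to\NS(\mathcal T^+_b)$ up to torsion. Because $\mathcal T^+_{\overline{\eta}}$ and $\mathcal T^+_b$ are smooth, $\Clns=\NS$ for them, and Lemma~\ref{lem:cl_birational} identifies $\Clns(\mathcal T_{\overline{\eta}})\cong\NS(\mathcal T^+_{\overline{\eta}})/\bigoplus_i\mathbb Z[E_{i,\overline{\eta}}]$ and $\Clns(T_b)\cong\NS(\mathcal T^+_b)/\bigoplus_i\mathbb Z[E_{i,b}]$. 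As the specialization isomorphism up to torsion carries $[E_{i,\overline{\eta}}]$ to $[E_{i,b}]$, it descends to an isomorphism up to torsion $\Clns(\mathcal T_{\overline{\eta}})\to\Clns(T_b)$, which is the desired specialization map.

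The hard part will be the spreading-out step: producing a genuine relative resolution over a dense open whose closed fibers are resolutions of the normal surfaces $T_b$, with the exceptional divisor forming a flat family whose components specialize to the exceptional curves downstairs. This is where one must use that surface singularities resolve in all characteristics and check that the fiberwise resolution, the birational identity outside the exceptional locus, and the flatness of each $E_i$ all hold simultaneously after shrinking $B$. Once this geometric input is in place, the comparison of N\'eron--Severi ranks and the descent along Lemma~\ref{lem:cl_birational} are formal.
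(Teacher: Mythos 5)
Your proposal is correct and follows essentially the same route as the paper: base change $B$ so the generic fiber resolves, spread out to a simultaneous resolution with controlled exceptional divisors, apply Theorem~\ref{thm:andre-ambrosi-christensen} to the smooth family, use preservation of intersection numbers to get injectivity up to torsion, and descend to $\Clns$ of the normal fibers via the exceptional classes. The only cosmetic difference is that you phrase the descent as a quotient by $\bigoplus_i\mathbb Z[E_i]$ using Lemma~\ref{lem:cl_birational}, whereas the paper phrases it as the rank count $\mathrm{rank}\,\Clns(T)=\rho(\tilde T)-\#\{\text{exceptional curves}\}$; these are the same argument.
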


\begin{proof}
First, we note that after base change by a purely inseparable dominant morphism $B'\to B$, the family $\mathcal T\times_B B'$ admits a simultaneous resolution of singularities $\tilde{\mathcal T}\to\mathcal T\times_B B'$ such that the exceptional divisors on the closed fibers specialize from the geometric generic fiber. This is obtained by spreading out a resolution of generic fiber $T\to\Spec K$ of $\mathcal T\to B$ after a purely inseparable base change: as in the argument of Lemma~\ref{lem:partial_alteration}, after some purely inseparable extension, there is a resolution of singularities of the surface $T\otimes_K K^{1/p^e}$ that is smooth over $K^{1/p^e}$. 

For a resolution of singularities of a surface $\pi\colon \tilde{T}\to T$, the rank of $\Clns(T)$ is equal to $\rho(\tilde{T})-\#\{$exceptional curves of $\pi\}$, so by construction it suffices to show the statement for the smooth family $\tilde{\mathcal T}\to B'$.

If $D^i$ are divisors on $\tilde{\mathcal T}_{\overline{\eta}}$ specializing to $D^i_b$ on the fiber $\tilde{T}_b$ over $b$, then the rank of the intersection matrix $(D^i_b\cdot D^j_b)$ is independent of $b\in B'$ \cite[Example 20.3.6]{ful84}, so $\NS(\tilde{\mathcal T}_{\overline{\eta}})\to\NS(\tilde{T}_b)$ has torsion kernel for any $b$. Thus the specialization map will be surjective up to torsion if and only if $\rho(\tilde{\mathcal T}_{\overline{\eta}})=\rho(\tilde{T}_b)$, and Theorem~\ref{thm:andre-ambrosi-christensen} produces such a $k$-point $b$.

\end{proof}

\begin{prop}\label{prop:other_fields}
Let $X$ be a normal projective variety of dimension $n\geq 3$ defined over an algebraically closed field $k\neq\overline{\mathbb F}_p$ and $\mathcal L$ a very ample line bundle on $X$.
\begin{enumerate}
\item\label{item:other_fields_higher_dim}
If $\mathcal L=\mathcal L_0^{\otimes d}$ for a very ample line bundle $\mathcal L_0$ and integer $d\geq 2$, then there is a complete intersection surface $T_b$ of $|\mathcal L^{\otimes 2}|$ with $n-3$ members of $|\mathcal L|$ defined over $k$ and such that the restriction map
\[\xymatrix{ \Cl(X) \ar[r] & \Cl(T_b)} \] has torsion cokernel.
\item\label{item:other_fields_threefold}
If $n=3$, further assume that $\mathcal L=\mathcal L_{A,0}^{\otimes 2 d_A}\otimes\mathcal L_{B,0}^{\otimes d_B}$ for very ample line bundles $\mathcal L_{A,0}, \mathcal L_{B,0}$ and integers $d_A, d_B\geq 2$ and that one of the following holds:
\begin{enumerate}[label=(\alph*)]
\item
$X$ is smooth,
\item
$\chara k=0$, or
\item
$\mathcal L \cong \mathcal M^{\otimes d}$ for some very ample line bundle $\mathcal M$ and integer $d$.
\end{enumerate}
Then there is a divisor $T_b\in|\mathcal L|$ defined over $k$ and such that the restriction map
\[\xymatrix{ \Cl(X) \ar[r] & \Cl(T_b)} \] has torsion cokernel.
\end{enumerate}
\end{prop}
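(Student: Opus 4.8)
The plan is to reduce the statement to an equality of ranks of N\'eron--Severi class groups and to combine the surjectivity results of \S\ref{sec:surj_pf_infinite_tdeg} for the geometric generic fiber with the André--Ambrosi--Christensen theorem. Let $\mathcal T\to B$ be the universal family over $k$ of the surfaces in question: in case (\ref{item:other_fields_higher_dim}) let $B$ be the dense open subset of $|\mathcal L^2|\times|\mathcal L|^{n-3}$ parametrizing normal complete intersection surfaces $T$, and in case (\ref{item:other_fields_threefold}) let $B\subset|\mathcal L|$ be the open locus of normal surfaces; write $\mathcal T_{\overline\eta}$ for the geometric generic fiber. It suffices to produce a point $b\in B(k)$ for which (i) $\mathrm{rank}\,\Clns(T_b)=\mathrm{rank}\,\Clns(X)$, (ii) the restriction $\Cl(X)\to\Cl(T_b)$ is injective, and (iii) $\Cl^0(X)\to\Cl^0(T_b)$ is an isomorphism. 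Indeed, from (ii) and (iii) the induced map $\Clns(X)\to\Clns(T_b)$ is injective, and by (i) it is then an injection of finitely generated abelian groups of equal rank, hence has torsion cokernel; a diagram chase with the sequences $0\to\Cl^0\to\Cl\to\Clns\to 0$ for $X$ and for $T_b$ then shows that $\Cl(X)\to\Cl(T_b)$ itself has torsion cokernel.

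To obtain the rank equality $\mathrm{rank}\,\Clns(\mathcal T_{\overline\eta})=\mathrm{rank}\,\Clns(X)$ for the geometric generic fiber, I would base change to a field $\mathbb K\supset k$ of infinite transcendence degree over the prime subfield. As in \S\ref{sec:very_general}, a $\mathbb K$-point of $B$ that is very general over a field of definition is a base change of $\mathcal T_{\overline\eta}$, so Proposition~\ref{prop:surj_for_L^2_to_surfaces} in case (\ref{item:other_fields_higher_dim}) and Proposition~\ref{prop:surj_for_2A+B} in case (\ref{item:other_fields_threefold})---whose hypotheses are exactly the assumptions imposed in (\ref{item:other_fields_threefold})---show that the restriction map to this very general fiber has torsion cokernel, while Proposition~\ref{prop:injectivity} shows it is injective; together these give that the $\Clns$ of the very general fiber has the same rank as that of $X$. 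Since $\Clns$ is invariant under extension of algebraically closed fields (Lemma~\ref{lem:clns_field_extn}), the same rank equality descends to $\mathcal T_{\overline\eta}$ itself, and this is now a purely numerical statement valid over the arbitrary field $k$.

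For the special fiber, I would first shrink $B$ to the dense open subset $B^\circ$ over which the restriction map is injective and induces an isomorphism on $\Cl^0$; these loci are complements of finite unions of proper closed subvarieties by Proposition~\ref{prop:injectivity} and Corollary~\ref{prop:cl^0_iso} (applied after factoring through intermediate complete intersections, using that $\mathcal L^2$ restricted to such a complete intersection is again an even power of a very ample bundle). Since $\mathcal T|_{B^\circ}\to B^\circ$ is a family of normal surfaces with the same geometric generic fiber, Corollary~\ref{cor:andre_for_surfaces} produces a point $b\in B^\circ(k)$ for which the specialization $\Clns(\mathcal T_{\overline\eta})\to\Clns(T_b)$ is an isomorphism up to torsion; in particular $\mathrm{rank}\,\Clns(T_b)=\mathrm{rank}\,\Clns(\mathcal T_{\overline\eta})=\mathrm{rank}\,\Clns(X)$, giving (i), while $b\in B^\circ$ gives (ii) and (iii). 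This produces the desired $T_b$ defined over $k$ and completes the argument.

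The main obstacle is the passage across the three different base fields. The inputs of \S\ref{sec:surj_pf_infinite_tdeg} apply only over fields of infinite transcendence degree and only at very general points, so the real content of the first step is to repackage their conclusion as the field-independent numerical statement $\mathrm{rank}\,\Clns(\mathcal T_{\overline\eta})=\mathrm{rank}\,\Clns(X)$, after which André--Ambrosi--Christensen furnishes a rational point with this generic rank over any $k\neq\overline{\mathbb F}_p$. The second delicate point, already absorbed into Corollary~\ref{cor:andre_for_surfaces}, is that $T_b$ need not be smooth, so the Picard-rank statement of Theorem~\ref{thm:andre-ambrosi-christensen} must be applied on a simultaneous resolution---obtained after a purely inseparable base change in characteristic $p$---and then translated back to $\Clns$ of the normal surface by accounting for the exceptional curves.
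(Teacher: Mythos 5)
Your proposal is correct and takes essentially the same route as the paper: pass to the universal family, base change to a field of infinite transcendence degree so that Proposition~\ref{prop:surj_for_L^2_to_surfaces} (resp.\ Proposition~\ref{prop:surj_for_2A+B}), together with Proposition~\ref{prop:injectivity} and Corollary~\ref{prop:cl^0_iso}, controls $\Clns$ of the geometric generic fiber, transfer this across fields by Lemma~\ref{lem:clns_field_extn}, and then produce the $k$-point via Corollary~\ref{cor:andre_for_surfaces}. The only difference is bookkeeping: the paper composes the restriction map with the specialization map in a commutative diagram (so it never needs injectivity at the special fiber), whereas you extract the conclusion by a rank count after explicitly shrinking the base to the locus where injectivity and the $\Cl^0$-isomorphism hold---a shrinking the paper's proof leaves implicit when it applies Corollary~\ref{prop:cl^0_iso} at $T_b$.
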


\begin{proof}
Let $\mathcal T\subset X\times B$ denote the corresponding universal family of surfaces and $\mathcal T_\eta$ the generic fiber of the projection onto $S$. Note that the geometric generic fiber $\mathcal T_{\overline{\eta}}$ is normal by Theorem~\ref{lem:bertini_va}.
Let $\mathbb K$ be an algebraically closed field of infinite transcendence degree containing $k$. Let $\mathcal L_{\mathbb K}$ denote the pullback to $X_{\mathbb K}:=X\otimes_k\mathbb K$, $\mathcal T_{\mathbb K}\to B_{\mathbb K}$ the corresponding universal family, and $(\mathcal T_{\mathbb K})_{\overline{\eta}}$ the geometric generic fiber, which agrees with the base change of $\mathcal T_\eta$ to the algebraic closure of the function field of $B_\mathbb K$.

By Corollary~\ref{cor:andre_for_surfaces} there is a $k$-point $b\in B$ such that the specialization $\Clns(\mathcal T_{\overline{\eta}})\to\Clns(T_b)$ is an isomorphism up to torsion, where $T_b$ is the corresponding surface. We have the following commutative diagram.

\[\xymatrixcolsep{5pc}\xymatrix{
\Clns(X_\mathbb K) \ar[r] \ar@{=}[d]^-{\text{Lemma~\ref{lem:clns_field_extn}}} & \Clns((\mathcal T_{\mathbb K})_{\overline{\eta}}) \ar@{=}[r]^-{\text{Lemma~\ref{lem:clns_field_extn}}} &\Clns(\mathcal T_{\overline{\eta}}) \ar@{=}[d]^-{\substack{\text{after }\otimes\mathbb Q \\ \text{Corollary~\ref{cor:andre_for_surfaces}}}} \\
\Clns(X) \ar[rr] & & \Clns(T_b) } \]
Moreover, $\Clns(X_\mathbb K)\to \Clns((\mathcal T_{\mathbb K})_{\overline{\eta}})$ is injective with torsion cokernel by Theorem~\ref{prop:injectivity}, Proposition~\ref{prop:cl^0_iso}, Proposition~\ref{prop:surj_for_L^2_to_surfaces} in case~\eqref{item:other_fields_higher_dim}, and Proposition~\ref{prop:surj_for_2A+B} in case~\eqref{item:other_fields_threefold}.
Therefore $\Clns(X)\to\Clns(T_b)$ has torsion cokernel, and using Proposition~\ref{prop:cl^0_iso} again we conclude that $\Cl(X)\to\Cl(T_b)$ has torsion cokernel.
\end{proof}

In higher dimensions, one can apply the above argument to Corollary~\ref{cor:surj_for_L^2_higher_dim} to show:
\begin{cor}\label{cor:other_fields_higher_dim}
Let $X$ be a normal projective variety of dimension $n\geq 4$ defined over an algebraically closed field $k\neq\overline{\mathbb F}_p$, $\mathcal L_0$ a very ample line bundle on $X$, and $d\geq 2$ an integer. Then there is a divisor $Y_b$ in $|\mathcal L_0^{\otimes d}|$ defined over $k$ for which the restriction map
\[\xymatrix{ \Cl(X) \ar[r] & \Cl(Y_b)} \]
\begin{enumerate}
\item
is surjective if $\chara k=0$, and
\item
has $p$-power torsion cokernel if $\chara k=p>0$.
\end{enumerate}
\end{cor}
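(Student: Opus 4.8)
The plan is to run the argument of Proposition~\ref{prop:other_fields} verbatim, but with two modifications: feed in the \emph{exact} higher-dimensional surjectivity of Corollary~\ref{cor:surj_for_L^2_higher_dim} in place of the surface surjectivity, and carry out the Andr\'e--Ambrosi--Christensen descent on a family of \emph{threefolds} rather than the full family of divisors, so that resolution of singularities is available in every characteristic. For $n\geq 4$ I would work over the open locus $B\subset|\mathcal L_0^d|^{\,n-3}$ whose points $b$ give an $(n-3)$-tuple of divisors with smooth normal complete intersection threefold $Z_b$; by Bertini the geometric generic fiber $\mathcal Z_{\overline\eta}$ is a normal threefold. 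Passing to an algebraically closed $\mathbb K\supset k$ of infinite transcendence degree and applying Lemma~\ref{lem:clns_field_extn} twice, Corollary~\ref{cor:surj_for_L^2_higher_dim} shows the restriction $\Clns(X)\cong\Clns(X_{\mathbb K})\to\Clns((\mathcal Z_{\mathbb K})_{\overline\eta})=\Clns(\mathcal Z_{\overline\eta})$ is surjective.

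The key step is a threefold analogue of Corollary~\ref{cor:andre_for_surfaces}. After a purely inseparable dominant base change $B'\to B$ (harmless on $k$-points since $k$ is perfect), I would spread out a resolution of the generic fiber to obtain a simultaneous resolution $\widetilde{\mathcal Z}\to\mathcal Z\times_B B'$ whose exceptional divisors on closed fibers specialize from those on the geometric generic fiber; here resolution of threefolds over an arbitrary field, available in all characteristics, is precisely what lets me avoid de Jong alterations and the $\mathbb Q$-coefficient bookkeeping they would require. Since each $\widetilde Z$ is smooth, $\Clns(\widetilde Z)=\NS(\widetilde Z)$, and Lemma~\ref{lem:cl_birational} identifies $\Clns(Z)$ with the quotient of $\NS(\widetilde Z)$ by the (specializing) span of the exceptional divisors. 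Applying Theorem~\ref{thm:andre-ambrosi-christensen} to the smooth family $\widetilde{\mathcal Z}\to B'$, restricted to the dense open where Corollaries~\ref{prop:cl^0_iso} and \ref{cor:inj_to_surfaces} apply, yields a $k$-point $b$ with $\rho(\widetilde{\mathcal Z}_{\overline\eta})=\rho(\widetilde Z_b)$; by \cite[Proposition 3.6]{mp12} the specialization $\NS(\widetilde{\mathcal Z}_{\overline\eta})\to\NS(\widetilde Z_b)$ is an isomorphism in characteristic $0$ and an isomorphism up to $p$-power torsion in characteristic $p$. Quotienting by the specializing exceptional subgroups gives the same for $\Clns(\mathcal Z_{\overline\eta})\to\Clns(Z_b)$.

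With this in hand I would conclude exactly as in Proposition~\ref{prop:other_fields}: the commuting square assembled from Lemma~\ref{lem:clns_field_extn} and the isomorphism just obtained, together with surjectivity of its top arrow, shows $\Clns(X)\to\Clns(Z_b)$ is surjective in characteristic $0$ and has $p$-power torsion cokernel in characteristic $p$, and the isomorphism $\Cl^0(X)\cong\Cl^0(Z_b)$ of Proposition~\ref{prop:cl^0_iso} upgrades this to the same statement for $\Cl$. To reach a divisor I would take $Y_b\in|\mathcal L_0^d|$ to be one of the defining hypersurfaces of $Z_b$; then $Z_b$ is a complete intersection of $n-4\leq(n-1)-2$ members inside $Y_b$, so $\Cl(Y_b)\to\Cl(Z_b)$ is injective by Corollary~\ref{cor:inj_to_surfaces} (and for $n=4$ one has $Z_b=Y_b$). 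Since $\Cl(X)\to\Cl(Z_b)$ factors through this injection, a one-line chase transfers surjectivity (resp.\ $p$-power torsion cokernel) from $\Cl(X)\to\Cl(Z_b)$ to $\Cl(X)\to\Cl(Y_b)$, giving the assertion.

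The step I expect to be the main obstacle is the threefold descent: arranging a genuinely \emph{simultaneous} resolution of the family whose exceptional divisors specialize compatibly, so that the identification $\Clns(Z)\cong\NS(\widetilde Z)/\bigoplus_i\mathbb Z[E_i]$ is uniform across the family and commutes with the specialization map of Theorem~\ref{thm:andre-ambrosi-christensen}. A secondary point is locating a single $k$-point $b$ lying in both the rank-preserving locus produced by Theorem~\ref{thm:andre-ambrosi-christensen} and the dense open required by Corollaries~\ref{prop:cl^0_iso} and \ref{cor:inj_to_surfaces}; this is resolved by applying the theorem to the family already restricted to that open, whose geometric generic fiber is unchanged.
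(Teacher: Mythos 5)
Your route is genuinely different from the paper's, and in one step it is not complete. The paper proves this corollary by running the argument of Proposition~\ref{prop:other_fields} directly on the family of divisors in $|\mathcal L_0^d|$ (of dimension $n-1$), feeding in Corollary~\ref{cor:surj_for_L^2_higher_dim} at the geometric generic fiber; the only characteristic-$p$ modification is to replace the resolution used in Corollary~\ref{cor:andre_for_surfaces} by a smooth $p$-alteration of the generic fiber \cite{temkin17}, whose finite part contributes only $p$-power torsion by the push--pull formula \cite[Example 1.7.4]{ful84}. You instead descend on the family of complete intersection \emph{threefolds} $Z_b$, so as to have resolution of singularities in every characteristic, and then transfer the conclusion up to a divisor $Y_b\supset Z_b$ via injectivity of $\Cl(Y_b)\to\Cl(Z_b)$. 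The threefold descent itself is essentially fine (one small correction: since threefold resolutions are not unique, unlike the minimal resolutions of surfaces implicitly used in Corollary~\ref{cor:andre_for_surfaces}, a resolution of $\mathcal Z_{\overline\eta}$ need not descend to a \emph{purely inseparable} extension of $k(B)$; but it descends to some finite extension, and a generically finite dominant base change $B'\to B$ is just as harmless for producing $k$-points). For $n=4$, where $Z_b=Y_b$ and no transfer is needed, your argument is sound and arguably cleaner than using alterations.

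The genuine gap, for $n\geq 5$, is the transfer step. You need injectivity of $\Cl(Y_b)\to\Cl(Z_b)$ at the particular $k$-point $b$ produced by Theorem~\ref{thm:andre-ambrosi-christensen}, and you propose to ensure it by restricting the family to ``the dense open where Corollary~\ref{cor:inj_to_surfaces} applies.'' But Corollary~\ref{cor:inj_to_surfaces} is a statement about a \emph{fixed} ambient normal variety: for each individual $Y$ it produces a dense open of the fiber $\{Y\}\times|\mathcal L_0^d|^{\,n-4}$ of good complete intersections, with the excluded locus depending on $Y$ (it comes from Bertini conditions and from the field-locally thin sets in N\'eron's theorem applied to the curve fibration of $Y$). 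Nothing in the paper shows this locus is constructible in the family, or that the union of the fiberwise good loci contains a dense open of the total space $B$; fiberwise genericity does not formally yield genericity in the family. Nor can you instead invoke injectivity for the generic divisor and specialize, since injectivity of restriction maps on class groups does not pass to special fibers --- the jumping of class groups under specialization is exactly the phenomenon this section is designed to circumvent. So the dense open you restrict Theorem~\ref{thm:andre-ambrosi-christensen} to is not known to exist, and establishing it would require reproving the injectivity results of Section~3 relatively over $|\mathcal L_0^d|$. This is precisely the complication the paper's route avoids: descending directly on the divisor family needs no comparison between $\Cl(Y_b)$ and $\Cl(Z_b)$ at the specialized point, and all of its inputs (Proposition~\ref{prop:injectivity}, Corollary~\ref{prop:cl^0_iso}, Corollary~\ref{cor:surj_for_L^2_higher_dim}) have the fixed variety $X$ as ambient space, where ``general'' is unambiguous.
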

The argument is essentially the same as in Proposition~\ref{prop:other_fields}, but in positive characteristic, instead of the resolution of singularities in Corollary~\ref{cor:andre_for_surfaces} we take a smooth $p$-alteration \cite{temkin17} of the generic fiber after a purely inseparable base change. The finite part of the alteration only contributes $p$-power torsion \cite[Example 1.7.4]{ful84}; this and the torsion in the cokernel of the specialization map \cite[Proposition 3.6]{mp12} are the reasons for the torsion in the characteristic $p$ statement.

When $\mathcal L_{A,0}=\mathcal L_{B,0}$ in part~\eqref{item:other_fields_threefold} of Proposition~\ref{prop:other_fields} (and using part~\eqref{item:other_fields_higher_dim} for the $d=4$ case) we obtain
\begin{cor}\label{cor:surj_for_2A+B_surface}
Let $X$ be a normal projective threefold over an algebraically closed field $k\neq\overline{\mathbb F}_p$ and $\mathcal L_0$ a very ample line bundle on $X$. Then for any integer $d\in\{4\}\cup\mathbb Z_{\geq 6}$
the restriction map
\[\xymatrix{ \Cl(X) \ar[r] & \Cl(T)} \]
is surjective up to torsion for very general $T\in|\mathcal L_0^{\otimes d}|$.
\end{cor}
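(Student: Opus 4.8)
The plan is to read the corollary off from Proposition~\ref{prop:other_fields} by choosing the auxiliary line bundles so that $\mathcal L_0^d$ is realized in one of its two cases, treating $d=4$ and $d\geq 6$ separately.

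For $d=4$, I would apply Proposition~\ref{prop:other_fields}(\ref{item:other_fields_higher_dim}) with $\mathcal L_0^2$ playing the role of the very ample bundle there, so that its $\mathcal L$ equals $\mathcal L_0^2$ and $|\mathcal L^2|=|\mathcal L_0^4|$. Since $n=3$, the complete intersection with $n-3=0$ members of $|\mathcal L|$ is simply a member of $|\mathcal L_0^4|$, and the proposition directly yields a $k$-point $T_b\in|\mathcal L_0^4|$ for which $\Cl(X)\to\Cl(T_b)$ has torsion cokernel.

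For $d\geq 6$, I would apply Proposition~\ref{prop:other_fields}(\ref{item:other_fields_threefold}) with $\mathcal L_{A,0}=\mathcal L_{B,0}=\mathcal L_0$, $d_A=2$, and $d_B=d-4$. Then $d_B\geq 2$ is equivalent to $d\geq 6$, and $\mathcal L=\mathcal L_{A,0}^{2d_A}\otimes\mathcal L_{B,0}^{d_B}=\mathcal L_0^{2d_A+d_B}=\mathcal L_0^{d}$, so $T_b\in|\mathcal L_0^d|$. The point that frees us from any hypothesis on $X$ or $\chara k$ is that condition (c) of Proposition~\ref{prop:other_fields}(\ref{item:other_fields_threefold}) is automatic here: $\mathcal L=\mathcal L_0^d=\mathcal M^d$ with $\mathcal M=\mathcal L_0$ very ample. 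This same constraint $2d_A+d_B\geq 6$ with $d_A,d_B\geq 2$, together with case (\ref{item:other_fields_higher_dim}) producing only even multiples, is exactly what forces the gap at $d=5$.

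It remains to upgrade the existence of a single good $k$-point to the very general statement. Here I would invoke the mechanism built into Proposition~\ref{prop:other_fields}: its commutative diagram shows that the geometric generic member $\mathcal T_{\overline\eta}$ already has torsion cokernel (surjectivity up to torsion over a field of infinite transcendence degree, transferred by Lemma~\ref{lem:clns_field_extn} and Corollary~\ref{cor:andre_for_surfaces}). Consequently the locus in $|\mathcal L_0^d|$ where surjectivity up to torsion fails is contained in the Picard-rank jumping locus, a countable union of proper closed subvarieties, so every very general $T$ lies in its complement and inherits the conclusion; the produced $k$-point $T_b$ confirms this complement is nonempty even when $k$ is countable. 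I do not expect a genuine obstacle in the corollary itself, since all the substantive work has already been carried out in Propositions~\ref{prop:surj_for_L^2_to_surfaces} and \ref{prop:surj_for_2A+B} and in the descent via Theorem~\ref{thm:andre-ambrosi-christensen}; the only care needed is the elementary bookkeeping on which multiples $d$ arise as $2d_A+d_B$ and the verification, through Lemma~\ref{lem:cokernel_to_Jac(C)}, that the relevant restriction cokernel stays positive-dimensional for $d_B\geq 2$.
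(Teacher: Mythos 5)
Your proposal is correct and follows exactly the paper's own route: the paper deduces the corollary from Proposition~\ref{prop:other_fields}, using case~(\ref{item:other_fields_higher_dim}) with $\mathcal L=\mathcal L_0^2$ for $d=4$ and case~(\ref{item:other_fields_threefold}) with $\mathcal L_{A,0}=\mathcal L_{B,0}=\mathcal L_0$ (so that hypothesis~(c) holds automatically with $\mathcal M=\mathcal L_0$) for $d\geq 6$, with the very general statement following from surjectivity up to torsion for the geometric generic member together with the $k$-point supplied by Theorem~\ref{thm:andre-ambrosi-christensen}. Your bookkeeping $d=2d_A+d_B$, $d_A,d_B\geq 2$, and the resulting gap at $d=5$, matches the paper's.
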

In particular we recover the statement modulo torsion for $X=\mathbb P^3_{\mathbb C}$ except for the case of degree 5 surfaces.

\bibliographystyle{alpha}
\bibliography{references_nl}

\end{document}